\newtheorem{theorem}{Theorem} [section]
\newtheorem{prop}[theorem]{Proposition}
\newtheorem{lemma}[theorem]{Lemma}
\newtheorem{conjecture}[theorem]{Conjecture}
\newtheorem{question}[theorem]{Question}
\theoremstyle{definition}
\newtheorem{remark}[theorem]{Remark}
\numberwithin{equation}{section}
\numberwithin{figure}{section}
\newcommand\A{{\mathbb A}}
\newcommand\C{{\mathbb C}}
\newcommand\Chat { {\hat{\C}} } 
\renewcommand\P{{\mathbb P}}
\newcommand\Q{{\mathbb Q}}
\newcommand\D{{\mathbb D}}
\newcommand\Gal{\operatorname{Gal}}
\newcommand\Qbar{\overline{\mathbb{Q}}}
\newcommand\Kbar{\overline{K}}
\newcommand\M {M} 
\newcommand\cM{\mathcal{M}}
\newcommand\Preper {\mathrm{Preper}}
\newcommand\<{\langle} %innerproduct brackets 
\renewcommand\>{\rangle} %%
\newcommand\eps{\varepsilon}
\newcommand\dist{\mathrm{dist}}
\newcommand\Diag{\mathrm{Diag}}
\newcommand\Mbad{M_{\mathrm{close}}}
\newcommand\Mgood{M_{\mathrm{help}}}
\newcommand\Mbounded{M_{\mathrm{bounded}}}
\begin{document}

\author{Laura De~Marco}
\address{
Laura DeMarco\\
Department of Mathematics\\
Harvard University\\
1 Oxford Street\\
Cambridge, MA  02138 \\
USA
}
\email{demarco@math.harvard.edu}

\author{Holly Krieger}
\address{
Holly Krieger\\
Department of Pure Mathematics and Mathematical Statistics\\
 University of Cambridge \\
Cambridge CB3 0WB\\
UK
}
\email{hkrieger@dpmms.cam.ac.uk}

\author{Hexi Ye}
\address{
Hexi Ye\\
Department of Mathematics\\
Zhejiang University\\
Hangzhou, 310027\\
China}
\email{yehexi@gmail.com}

\date{\today}

\begin{abstract}
Let $f_c(z) = z^2+c$ for $c \in \C$.  We show there exists a uniform upper bound on the number of points in $\P^1(\C)$ that can be preperiodic for both $f_{c_1}$ and $f_{c_2}$, for any pair $c_1\not= c_2$ in $\C$.  The proof combines arithmetic ingredients with complex-analytic: we estimate an adelic energy pairing when the parameters lie in $\Qbar$, building on the quantitative arithmetic equidistribution theorem of Favre and Rivera-Letelier, and we use distortion theorems in complex analysis to control the size of the intersection of distinct Julia sets.  The proofs are effective, and we provide explicit constants for each of the results. 
\end{abstract}

\title{Common preperiodic points for quadratic polynomials}
\maketitle
\thispagestyle{empty}

\section{Introduction}

Consider the family of quadratic polynomials 
	$$f_c(z) = z^2 + c$$
for $c \in \C$, each viewed as a dynamical system $f_c :\Chat \to \Chat$ on the Riemann sphere.  Recall that a point $z \in \Chat$ is said to be \emph{preperiodic} if its forward orbit under $f_c$ is finite.  It is well known that the set of all preperiodic points for $f_c$ will determine $c$.  Indeed, we have 
\begin{equation} \label{known equivalence}
	\Preper(f_{c_1}) = \Preper(f_{c_2}) \iff J(f_{c_1}) = J(f_{c_2}) \iff c_1 = c_2
\end{equation}
in this family, where $J(f_c)$ is the Julia set and $\Preper(f_c)$ the set of preperiodic points \cite{Baker:Eremenko}; see \S\ref{unique} for more information.

For any $c_1\not= c_2$ in $\C$, the intersection of $\Preper(f_{c_1})$ and $\Preper(f_{c_2})$ is finite \cite[Corollary 1.3]{BD:preperiodic} \cite[Theorem 1.3]{Yuan:Zhang:II}, even though their respective accumulation sets, the Julia sets of $f_{c_1}$ and $f_{c_2}$, can have complicated, infinite intersection.  We investigate the question of how many preperiodic points are required to uniquely determine the polynomial, without the information of the period or length of an orbit.  We prove:  

\begin{theorem} \label{preperbound} 
There exists a uniform constant $B$ so that
$$|\Preper(f_{c_1}) \cap \Preper(f_{c_2})| \leq B$$
for every $c_1 \not= c_2$ in $\C$.  
\end{theorem}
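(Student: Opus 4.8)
\emph{Proof proposal.} The plan is to reduce to algebraic parameters, then balance an arithmetic upper bound on an ``adelic energy'' between the two dynamical systems against a complex-analytic lower bound.

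First I would reduce to $c_1,c_2\in\Qbar$. Suppose $z_1,\dots,z_N\in\P^1(\C)$ are distinct and preperiodic for both $f_{c_1}$ and $f_{c_2}$ with $c_1\ne c_2$. Each $z_j$ satisfies integer-coefficient identities $f_{c_1}^{a_j}(z_j)=f_{c_1}^{b_j}(z_j)$ and $f_{c_2}^{a_j'}(z_j)=f_{c_2}^{b_j'}(z_j)$, so $c_1,c_2,z_1,\dots,z_N$ generate a finitely generated $\Q$-subalgebra $R$ of $\C$ in which these identities hold and in which $c_1-c_2$ and all $z_i-z_j$ are nonzero. A closed point of $\operatorname{Spec}R$ lying outside the finitely many proper closed subsets $\{c_1=c_2\}$ and $\{z_i=z_j\}$ (such a point exists by the Nullstellensatz, and its residue field is a number field) yields $\tilde c_1\ne\tilde c_2$ in $\Qbar$ with $N$ distinct common preperiodic points. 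So it suffices to bound $N$ for $c_1\ne c_2$ in $\Qbar$; fix a number field $K\ni c_1,c_2$ and set $S=\Preper(f_{c_1})\cap\Preper(f_{c_2})$, a finite $\Gal(\Qbar/K)$-stable set.

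Next I would set up the arithmetic. For $c\in K$ the canonical height $\hhat_c$ is the height of an adelically metrized line bundle $\cLbar_c$ on $\P^1$ of degree $1$ with semipositive curvature measures $\mu_{c,v}$ at each place $v$ of $K$ (the measure of maximal entropy on $J(f_c)$ when $v$ is archimedean), and $\cLbar_c\cdot\cLbar_c=0$ by the standard normalization for the monic polynomial $f_c$. A point lies in $S$ exactly when $\hhat_{c_1}$ and $\hhat_{c_2}$ both vanish on it. Consider the adelic energy pairing $\langle c_1,c_2\rangle:=\cLbar_{c_1}\cdot\cLbar_{c_2}$; it is a sum of nonnegative local energies of $\mu_{c_1,v}-\mu_{c_2,v}$, hence $\ge 0$, and it is $>0$ since $J(f_{c_1})\ne J(f_{c_2})$ by \eqref{known equivalence}. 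For a Galois orbit $O\subseteq S$ the canonical heights of its points vanish, so the quantitative equidistribution theorem of \cite{FRL:equidistribution} forces the empirical measure of $O$ to be close, at every $v$, to both $\mu_{c_1,v}$ and $\mu_{c_2,v}$, with error of size $(\log|O|)/|O|$. Assembling this over the orbits making up $S$ (with care for the orbit decomposition, using that $f_{c_1}$ has few preperiodic points of small degree over $\Q(c_1)$) and applying the triangle inequality for energies gives
\[ \langle c_1,c_2\rangle \;\le\; \varepsilon(N), \qquad N=|S|, \]
with $\varepsilon(N)=O\big((\log N)/N\big)$. Thus a lower bound $\langle c_1,c_2\rangle\ge\delta$ with $\delta$ independent of $c_1,c_2$ would bound $N$, effectively.

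Finally — and this is the crux — I would bound $\langle c_1,c_2\rangle$ below. The non-archimedean terms are $\ge 0$, so it suffices to control the archimedean contribution $\tfrac1{[K:\Q]}\sum_\sigma\int G_{\sigma c_1}\,d\mu_{\sigma c_2}$, where $G_c$ is the dynamical Green's function of $f_c$ and $\sigma$ runs over archimedean embeddings. Each integral is positive but can be small when $c_1$ is archimedean-close to $c_2$ at some $\sigma$. The resolution is that the common preperiodic points lie in $J(f_{c_1})\cap J(f_{c_2})$, so if $N$ is large the equidistribution of the previous step says they are nearly $\mu_{c_1}$-equidistributed while all lying on $J(f_{c_2})$; distortion estimates (Koebe-type bounds for the B\"ottcher and linearizing coordinates of the two maps) then control $J(f_{c_1})\cap J(f_{c_2})$ relative to $\mu_{c_1}$, showing the two Julia sets cannot overlap on a $\mu_{c_1}$-substantial set unless $c_1=c_2$, quantitatively enough either to force $\langle c_1,c_2\rangle\ge\delta$ or to bound $N$ directly. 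Combined with the previous step this gives $N\le B$ with $B$ effective, and by the first step the same $B$ works for all $c_1\ne c_2$ in $\C$. The main obstacle is precisely this last step: the naive lower bounds for the archimedean energy degenerate as $c_1\to c_2$, so one genuinely needs the complex-analytic control of $J(f_{c_1})\cap J(f_{c_2})$ to convert ``many common preperiodic points'' into ``the two systems are arithmetically close,'' contradicting $c_1\ne c_2$.
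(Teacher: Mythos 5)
Your overall architecture (reduce to $\Qbar$ by specialization, then play a quantitative equidistribution upper bound on the energy pairing against a lower bound) is the paper's, and your reduction to $\Qbar$ is essentially the paper's argument in \S\ref{proof over C} and is fine. But there are two genuine gaps in the arithmetic core. First, the quantitative equidistribution step does not give a bound $\<f_{c_1},f_{c_2}\>\le O((\log N)/N)$ uniformly in the parameters. The regularization radii must shrink like a negative power of $\max\{|c_1|_v,|c_2|_v\}$ (this is exactly what Propositions \ref{archimedean epsilon}, \ref{escapebound pnot2}, \ref{escapebound p2} provide), because the H\"older exponents of the potentials degenerate as $c\to\infty$; the resulting bound is $\<f_{c_1},f_{c_2}\>\le(\eps+C(\eps)/(N-1))(h(c_1,c_2)+1)$ as in Theorem \ref{boundingN}, with the height factor unavoidable. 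Consequently a uniform lower bound $\<f_{c_1},f_{c_2}\>\ge\delta$ alone does \emph{not} bound $N$: for pairs of large height you also need the pairing to grow at least linearly in $h(c_1,c_2)$ (Theorem \ref{pairingbound}), whose proof --- the ``helpful places'' balancing via the product formula in Section \ref{stronglowerbound}, resting on the local lower bounds of Theorems \ref{archimedeanbounds}, \ref{not 2 bound}, \ref{p2} --- is absent from your outline.

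Second, your route to the uniform $\delta$ is not viable: after discarding the non-archimedean terms as merely nonnegative, you try to bound the archimedean contribution below by a distortion/overlap argument for $J(f_{c_1})\cap J(f_{c_2})$. But the archimedean energy has no uniform positive lower bound over $c_1\ne c_2\in\Qbar$: taking $c_1=0$ and $c_2=1/q$ with $q\to\infty$, one has $E_\infty(c_1,c_2)\to 0$, so no purely archimedean argument can produce $\delta$. (Also, common preperiodic points lie in the filled Julia sets $K_{c_1}\cap K_{c_2}$, not in $J_{c_1}\cap J_{c_2}$ --- think of $0$ when both parameters are centers of hyperbolic components --- so the overlap mechanism is misstated as well.) The actual proof of Theorem \ref{uniformbound} is global: if $\<f_{c_1},f_{c_2}\>$ is small, the archimedean estimates force $|c_1-c_2|_v$ to be small at a positive proportion of archimedean places, which makes $h(c_1-c_2)$ large, contradicting the lower bound $\<f_{c_1},f_{c_2}\>\ge\frac{1}{16}h(c_1-c_2)-C$ of Theorem \ref{weak pairingbound}; that lower bound uses the non-archimedean local energies being bounded below by constants times $\log^+|c_1-c_2|_v$, i.e.\ strictly more than their nonnegativity. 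Without these two ingredients your scheme does not close.
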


\begin{remark}  \label{explicit B}
Our proof leads to an explicit value for $B$.  Without making an effort to optimize our constants, we show that we can take $B = 10^{103}$.  This bound is probably far from optimal.  The largest intersection we know was found by Trevor Hyde:  the set $\Preper(f_{-21/16}) \cap \Preper(f_{-29/16})$ consists of at least 27 points in $\Chat$.  These two polynomials also appear in \cite{Poonen:conjecture}.
\end{remark}

\begin{remark} 
There is no uniform bound on the periods or orbit lengths of the elements of $\Preper(f_{c_1}) \cap \Preper(f_{c_2})$ as $c_1$ and $c_2$ vary.  For example, taking $c_1$ and $c_2$ to be distinct centers of hyperbolic components within the Mandelbrot set, we will have $0 \in \Preper(f_{c_1}) \cap \Preper(f_{c_2})$ with periods as large as desired.   
\end{remark}

\subsection{Motivation and background}
For any pair of rational functions $f, g: \Chat \to \Chat$ of degree at least 2, it is known that a dichotomy holds:  either the intersection $\Preper(f) \cap \Preper(g)$ is finite or $\Preper(f) = \Preper(g)$ \cite{BD:preperiodic, Yuan:Zhang:II}.  We suspect a much stronger result may hold, and we propose the following conjecture:

\begin{conjecture}  \label{conjecture bound}
For each degree $d\geq 2$, there exists a constant $B = B(d)$ so that either
$$|\Preper(f) \cap \Preper(g)| \leq B$$
or 
	$$\Preper(f) = \Preper(g)$$
for any pair of rational functions $f$ and $g$ in $\C(z)$ of degree $d$.  
\end{conjecture}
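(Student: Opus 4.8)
The natural plan is to run the arithmetic/complex-analytic argument behind Theorem~\ref{preperbound} inside the moduli space $\cM_d$ of degree-$d$ rational maps, keeping track of how every constant depends on $d$.  A standard spreading-out argument first reduces to $f,g\in\Qbar(z)$: a pair over $\C$ with $\Preper(f)\neq\Preper(g)$ and more than $B$ common preperiodic points spreads out over an integral $\Q$-variety $V$, and intersecting the Zariski-closed conditions ``$z_i$ is preperiodic for $f$'' and ``$z_i$ is preperiodic for $g$'' with the complement of the locus $\{\,\Preper(f_v)=\Preper(g_v)\,\}$ — itself Zariski-closed, by the rigidity behind the dichotomy of \cite{BD:preperiodic,Yuan:Zhang:II} — and of the degenerate loci ($\deg<d$, two $z_i$ colliding, $f$ conjugate to $g$) leaves a nonempty Zariski-open subset of $V$, hence a $\Qbar$-point giving a counterexample over $\Qbar$.

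Assume now $f,g\in\Qbar(z)$, defined over a number field $K$.  Encode the dynamics by Zhang's adelic metrized line bundles $\overline{\cL}_f,\overline{\cL}_g$ on $\P^1$, with canonical heights $\hhat_f,\hhat_g$ (so $\hhat_f(z)=0\iff z\in\Preper(f)$) and $v$-adic equilibrium measures $\mu_{f,v},\mu_{g,v}$.  Let $E(f,g)=\sum_v n_v\,E_v(\mu_{f,v}-\mu_{g,v})\ge 0$ denote the mutual energy pairing, a weighted sum over places of the nonnegative local $v$-adic energies; by the arithmetic Hodge-index theorem, $E(f,g)=0$ exactly when $\mu_{f,v}=\mu_{g,v}$ for every $v$, i.e.\ exactly when $\Preper(f)=\Preper(g)$.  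Set $X=\Preper(f)\cap\Preper(g)$ and $N=|X|$; every point of $X$ satisfies $\hhat_f=\hhat_g=0$, and $X$ is $\Gal(\Qbar/K)$-stable.  Applying the quantitative equidistribution theorem of \cite{FRL:equidistribution} to the counting measure $\mu_X$ at each place $v$ — with \emph{vanishing} height contribution, since $\hhat_f$ and $\hhat_g$ vanish on $X$ — bounds the energy discrepancies $\|\mu_X-\mu_{f,v}\|_v$ and $\|\mu_X-\mu_{g,v}\|_v$ by $O((\log N)/N)$, with an implied constant governed by the local geometry of $f$ (resp.\ $g$) at $v$; these vanish at every place of good reduction for both maps, so only the archimedean and bad-reduction places contribute.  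With the triangle inequality this gives $E(f,g)\ll (\log N)/N$, which since $E(f,g)>0$ already forces $N<\infty$, recovering the dichotomy.

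The remaining, and essential, task is to remove the dependence of these constants on $f$ and $g$, and this needs genuinely new input of two kinds.  First, a \emph{uniform quantitative equidistribution}: the implied constant at a place $v$ must be controlled by $d$ and an explicit local complexity of $f$ and $g$ at $v$ — at non-archimedean $v$ via reduction theory and the minimal resultant, arranged so that the total contribution of the bad primes is $\ll_d 1+h(f)+h(g)$ for a suitable height $h$ on $\cM_d$; at archimedean $v$ via complex-analytic distortion theorems bounding the local geometry (capacity, moduli, the effect of the $2d-2$ critical points) of a degree-$d$ equilibrium measure uniformly over $\cM_d$, up to terms again absorbed into a height.  This archimedean estimate, the direct generalization of the distortion bounds for quadratic Julia sets used in Theorem~\ref{preperbound}, carried out uniformly over all of $\cM_d$ and a suitable compactification, is the main obstacle — the quadratic case is feasible only because $\cM_2$ is one-dimensional and the geometry of quadratic Julia sets (connected or Cantor) is fully understood.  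Second, a \emph{uniform energy gap}: one must rule out $E(f,g)$ being positive yet arbitrarily small; as $\{E=0\}\subset\cM_d\times\cM_d$ is cut out by explicit equations of $d$-bounded degree, a Liouville/Lehmer-type inequality should give $E(f,g)\gg_d (1+h(f)+h(g))^{-\kappa}$, whose growth cancels the constant from the first step and yields $N\ll_d 1$.

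An appealing alternative to the energy-gap step is to feed the near-equalities $\mu_{f,v}\approx\mu_{g,v}$ at \emph{all} places simultaneously back into arithmetic equidistribution — now on the parameter space $\cM_d$ rather than on $\P^1$, in the spirit of the recent uniform Bogomolov-type theorems — to conclude directly that $N$ large forces $\mu_{f,v}=\mu_{g,v}$ for all $v$, hence $\Preper(f)=\Preper(g)$.  In every version of the argument, the decisive new ingredient is uniform complex-analytic control of the geometry of degree-$d$ Julia sets.
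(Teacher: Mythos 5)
The statement you were asked about is Conjecture \ref{conjecture bound}, which the paper explicitly leaves open: it is stated as a conjecture, and the paper proves only the special case of the quadratic family $f_c(z)=z^2+c$ (Theorem \ref{preperbound}) together with one family of Latt\`es maps from \cite{DKY:UMM}. What you have written is not a proof but a research program, and to your credit you say so yourself: you isolate ``uniform quantitative equidistribution over $\cM_d$'' and a ``uniform energy gap'' as ``the remaining, and essential, task'' requiring ``genuinely new input.'' Those two items are precisely the content of Theorems \ref{boundingN} and \ref{uniformbound}, whose proofs in the quadratic case occupy most of the paper and rest on very specific features of $z^2+c$: the explicit tree structure of the non-archimedean Julia sets at $v\nmid 2$ and $v\mid 2$ (Sections \ref{not2p} and \ref{2p}), and Koebe-type distortion control of $\lambda_c$ near $J_c$ exploiting that the only finite critical point is $0$ (Sections \ref{archestimates}--\ref{archpairingbounds}). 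None of this is available uniformly over $\cM_d$, so your outline does not close the gap; it restates it.

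Two specific steps in your sketch are moreover not merely unproved but dubious as stated. First, the proposed ``Liouville/Lehmer-type inequality $E(f,g)\gg_d(1+h(f)+h(g))^{-\kappa}$'' presumes that $\{E=0\}\subset\cM_d\times\cM_d$ is cut out by explicit polynomial equations of degree bounded in $d$; but $E(f,g)=0$ is the condition that the canonical measures agree at \emph{every} place, and the paper itself notes that even the classification of rational maps sharing a measure of maximal entropy is open in general. There is no known algebraicity statement of the kind your inequality would require, and even granting one, a Lehmer-type lower bound for heights off a subvariety is itself a hard open problem in general. (Note also that the paper's own proof of Theorem \ref{uniformbound} does not proceed by such an inequality: it combines a compactness/contradiction argument at archimedean places with the height lower bound of Theorem \ref{weak pairingbound}, and the effective version in \S\ref{explicit delta} requires a further delicate expansion of $E_\infty$ when $|c_1-c_2|$ is small.) Second, your spreading-out reduction to $\Qbar$ asserts that the locus $\{\Preper(f_v)=\Preper(g_v)\}$ is Zariski-closed ``by the rigidity behind the dichotomy''; this needs an argument, and the paper's own specialization step (\S\ref{proof over C}) avoids the issue entirely by specializing the finitely many preperiodicity relations \eqref{persistent} and invoking the already-established uniform bound over $\Qbar$ --- a device unavailable to you since the uniform bound over $\Qbar$ is exactly what is missing. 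In short: the conjecture remains open, and your proposal, while a reasonable account of how one might attack it along the lines of the paper, does not constitute a proof.
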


Conjecture \ref{conjecture bound} would imply that a configuration of $B+1$ points on the Riemann sphere, if preperiodic for some map of degree $d\geq 2$, will almost uniquely determine the map among all maps of the same degree.  It is known that, except for maps conjugate to $z^{\pm d}$, the equality $\Preper(f) = \Preper(g)$ is equivalent to the statement that the measures of maximal entropy for $f$ and $g$ coincide; one implication is proved in \cite{Levin:Przytycki} (assuming $f$ and $g$ are non-exceptional) and the other in \cite[Theorem 1.5]{Yuan:Zhang:II}.    A complete classification of all rational maps having the same measure of maximal entropy is still open, however, unless the maps are polynomial  \cite{Baker:Eremenko, Beardon:symmetries}; see also \cite{Levin:Przytycki, Ye:symmetries, Pakovich:maximal} for results about rational maps with the same maximal measure.

As discussed in \cite{DKY:UMM}, Conjecture \ref{conjecture bound} is analogous to a question posed by Mazur  \cite{Mazur:curves}, proposing the existence of uniform bound -- depending only on the genus $g$ -- on the number of torsion points on a compact Riemann surface of genus $g>1$ inside its Jacobian.  In fact, the special case of Conjecture \ref{conjecture bound} for the 1-parameter family of Latt\`es maps
\begin{equation} \label{Lattes}
	f_t(z) = \frac{(z^2-t)^2}{4z(z-1)(z-t)}
\end{equation}
in degree 4, for $t \in \C\setminus\{0,1\}$, was proved in \cite{DKY:UMM}; it implied a positive answer to Mazur's question for a certain 2-parameter family of genus 2 Riemann surfaces.  (The uniform bound for all curves of a fixed genus was recently obtained by K\"uhne \cite{Kuhne:UMM}.)

\begin{remark} The bound $B$ in Conjecture \ref{conjecture bound}, if it exists, must depend on the degree $d$.  It is easy to find examples with growing degrees with growing numbers of common preperiodic points.  For example, the sequences of polynomials
	$$f_n(z) = z^2(z-1)\cdots(z-n) \quad\mbox{ and } \quad g_n(z) = z(z-1)\cdots(z-n)(z-(n+1))$$
have degree $n+2$ with at least $n+1$ common preperiodic points, for all $n\geq 1$.  Their sets of preperiodic points cannot be equal because their Julia sets are not the same:  we have $0 \in J(g_n)$ for all $n\geq 1$, because the fixed point at $0$ is repelling for $g_n$, but $0 \not\in J(f_n)$ for all $n$, because the fixed point at $0$ is attracting for $f_n$.
\end{remark}

\subsection{Further results and proof strategy}
The proof of Theorem \ref{preperbound} employs a combination of arithmetic and analytic techniques, and we first prove a version of Theorem \ref{preperbound} when the parameters $c_1$ and $c_2$ are algebraic numbers.   The basic algebraic observation is that the set of preperiodic points of $f_c$ is invariant under the action of the Galois group $\Gal(\Kbar/K)$, for any number field $K$ containing $c$.  Finiteness of the intersection $\Preper(f_{c_1}) \cap \Preper(f_{c_2})$, when $c_1$ and $c_2$ are algebraic, is an immediate consequence of arithmetic equidistribution:  large Galois orbits in the set $\Preper(f_c)$ are uniformly distributed with respect to the measure of maximal entropy $\mu_c$ \cite{Baker:Rumely:equidistribution, FRL:equidistribution, ChambertLoir:equidistribution}, while $\mu_{c_1} = \mu_{c_2}$ if and only if $c_1 = c_2$ \cite{Baker:Eremenko}.  We provide a few simple examples in Section \ref{examples} to illustrate these ideas. 

The uniform bound in Theorem \ref{preperbound} comes from controlling the rate of equidistribution, not just over $\C$ but at all places of the number field $K$ simultaneously.  To do so, we make use of an adelic energy pairing between the polynomials $f_{c_1}$ and $f_{c_2}$.  This is a sum of integrals, one for each of the primes associated to a number field $K$ containing both $c_1$ and $c_2$, which we describe now.  For any $c$ in $K$ and any place $v$ of $K$, we let 
	$$\lambda_{c,v}(z) = \lim_{n\to\infty} \frac{1}{2^n} \log \max\{ |f_c^n(z)|_v, 1\}$$
denote the $v$-adic escape-rate function of $f$, with $z$ in the field of $v$-adic numbers $\C_v$.  This is the usual escape-rate function on $\C$ for $v | \infty$, coinciding with the Green's function for the complement of the filled Julia set, with logarithmic pole at $\infty$.  At every place $v$, the function $\lambda_{c,v}$ extends continuously and subharmonically to the Berkovich affine line $\A^{1,an}_v$, and its Laplacian is the canonical $v$-adic measure $\mu_{c,v}$ for $f_c$ \cite{BRbook, FRL:equidistribution}.  For archimedean places $v$, we recover the Brolin-Lyubich measure \cite{Brolin, Lyubich:entropy}.  The energy pairing is defined to be
\begin{equation}  \label{energy pairing}
\< f_{c_1}, f_{c_2}\>  := \sum_{v \in M_K}  \frac{[K_v:\Q_v]}{[K:\Q]} \int_{\A^{1, an}_v} \lambda_{c_1,v} \, d\mu_{c_2, v}.
\end{equation}
The pairing is symmetric, and each term in the sum is non-negative, vanishing if and only if $\mu_{c_1, v} = \mu_{c_2,v}$ \cite{PST:pairing}.  The integral thus provides a notion of distance between the two measures.  In particular, we have 
	$$\<f_{c_1}, f_{c_2} \> \geq 0 \mbox{ with equality if and only if } c_1 = c_2.$$

We prove:  

\begin{theorem}\label{uniformbound}
There is a constant $\delta>0$, such that 
   $$\<f_{c_1},f_{c_2}\> \geq \delta$$
for all $c_1\neq c_2\in \Qbar$. 
\end{theorem}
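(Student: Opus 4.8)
The plan is to fix a number field $K$ containing $c_1$ and $c_2$, set $d=[K:\Q]$, $d_v=[K_v:\Q_v]$, $\alpha=c_1-c_2\neq 0$, and bound the individual local terms of \eqref{energy pairing} from below in a way that survives summing over all places. The key preliminary observation is that since each $f_c$ is monic of degree $2$, its filled Julia set has $v$-adic capacity $1$ at every place; hence $\int\lambda_{c,v}\,d\mu_{c,v}=0$, so $\lambda_{c,v}$ is the potential of $\mu_{c,v}$, and expanding the energy of the mass-zero signed measure $\mu_{c_1,v}-\mu_{c_2,v}$ gives, at each $v$,
\begin{equation}\label{pairing-energydist}
I_v:=\int_{\A^{1,\mathrm{an}}_v}\lambda_{c_1,v}\,d\mu_{c_2,v}\;=\;\tfrac12\,\big\|\mu_{c_1,v}-\mu_{c_2,v}\big\|_v^2\;\ge\;0 ,
\end{equation}
the right-hand side being (half) the $v$-adic energy norm of the difference of the canonical measures, which vanishes exactly when $\mu_{c_1,v}=\mu_{c_2,v}$. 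Thus $\<f_{c_1},f_{c_2}\>=\sum_v\frac{d_v}{d}I_v$, and I want: a lower bound for $I_v$ at finite places in terms of $\log^+|\alpha|_v$; an \emph{absolute} positive lower bound for $I_v$ at archimedean places when $|\alpha|_v$ is bounded below; and a combinatorial mechanism—the product formula together with the fact that a nonzero algebraic number cannot have all its conjugates small—that turns these into a uniform $\delta$.

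\emph{Finite places.} Let $v\nmid\infty$. If $|c_1|_v\le1$ and $|c_2|_v\le1$ the maps have good reduction, both $\mu_{c_i,v}$ are the Gauss point, $\lambda_{c_1,v}$ vanishes there, and $I_v=0=\log^+|\alpha|_v$. Otherwise $\max(|c_1|_v,|c_2|_v)=|\alpha|_v$ and at least one map has bad reduction; then $\mu_{c_2,v}$ is carried by the Julia set of $f_{c_2}$, which lies on the circle $\{|z|_v=|c_2|_v^{1/2}\}$, and $\lambda_{c_1,v}$ is computed explicitly from the tree dynamics. A short calculation—pushing such a $z$ forward by $f_{c_1}$ and using that $|\alpha|_v$ exceeds the radius of the two critical-value balls of $f_{c_1}$, so that the $f_{c_1}$-orbit of $z$ leaves the filled Julia set of $f_{c_1}$ within two steps at scale $\ge|\alpha|_v\,|c_1|_v^{1/2}$—gives
\begin{equation}\label{pairing-finite}
I_v\;\ge\;\tfrac14\log^+|\alpha|_v
\end{equation}
for all $v\nmid 2$ (at $v\mid2$ the critical-value balls are larger, one iterates a few more times, and \eqref{pairing-finite} holds with a smaller but still absolute constant). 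Summing and using $\sum_v\frac{d_v}{d}\log^+|\alpha|_v=h(\alpha)$, we get $\sum_{v\nmid\infty}\frac{d_v}{d}I_v\ge\frac14 h_{\mathrm{fin}}(\alpha)$, where $h_{\mathrm{fin}}(\alpha):=\frac1d\sum_{v\nmid\infty}d_v\log^+|\alpha|_v$ and $h_\infty(\alpha):=\frac1d\sum_{v\mid\infty}d_v\log^+|\alpha|_v$, with $h(\alpha)=h_{\mathrm{fin}}(\alpha)+h_\infty(\alpha)$.

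\emph{Archimedean places---the main obstacle.} At an archimedean $v$ with embedding $\sigma\colon K\hookrightarrow\C$, \eqref{pairing-energydist} reads $I_v=E(\sigma(c_1),\sigma(c_2))$, where $E(a,b):=\int_\C\lambda_a\,d\mu_b=\tfrac12\|\mu_a-\mu_b\|^2$, with $\lambda_a$ the Green's function of the filled Julia set of $z^2+a$ and $\mu_b$ the Brolin measure of $z^2+b$. The plan rests on the claim that there are absolute $\delta_1,c'>0$ with
\begin{equation}\label{pairing-arch}
E(a,b)\ \ge\ \delta_1\quad\text{whenever }|a-b|\ge\tfrac12 ,\qquad\text{and}\qquad E(a,b)\ \ge\ c'\log^+|a-b|\ \text{ for all }a,b .
\end{equation}
The second inequality is easy: $|a-b|$ large forces $\max(|a|,|b|)\ge|a-b|/2$ large, and then $\lambda_a\ge\frac12\log\max(|a|,|b|)-O(1)$ on the support of $\mu_b$. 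For the first: on a fixed compact parameter set it is immediate from continuity and $E(a,b)=0\iff a=b$; if $\max(|a|,|b|)$ is large but $|a|\not\asymp|b|$ or one parameter is bounded, then $E(a,b)\gtrsim\log\max(|a|,|b|)>\delta_1$ directly; the genuinely hard regime is $|a|\asymp|b|\to\infty$ with $|a-b|\ge\frac12$. Here I would rescale by $\sqrt{-b}$: for $|b|$ large the filled Julia set of $z^2+b$ is the disjoint union of two pieces clustering near $\pm\sqrt{-b}$, and via the B\"ottcher coordinate and Koebe-type distortion estimates each piece is, up to a uniformly bounded conformal distortion, a $\frac{1}{2\sqrt{-b}}$-scaled translated copy of the filled Julia set itself (and likewise for $a$). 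Substituting this self-similar description into the energy integral, the dominant $\tfrac12\log|b|$ contributions forced by the capacity normalization cancel between self- and cross-energies, leaving a renormalization inequality bounding $\|\mu_a-\mu_b\|^2$ below by $\tfrac12$ times an analogous quantity plus a strictly positive absolute constant from the inter-cluster interaction; iterating produces the uniform bound $\delta_1$. Making this self-similar/renormalization picture rigorous—controlling the B\"ottcher distortion uniformly as the parameter escapes to infinity and verifying that the surviving constant is genuinely positive rather than merely non-negative—is the crux of the whole argument, and is where the complex-analytic distortion theorems enter.

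\emph{Combining.} Fix a small absolute $\eta\in(0,\tfrac14\log2]$ and split into three cases. If $h_{\mathrm{fin}}(\alpha)\ge\eta$, then \eqref{pairing-finite} gives $\<f_{c_1},f_{c_2}\>\ge\frac14 h_{\mathrm{fin}}(\alpha)\ge\frac{\eta}{4}$. If $h_\infty(\alpha)\ge\eta$, then summing the second bound of \eqref{pairing-arch} over the archimedean places gives $\<f_{c_1},f_{c_2}\>\ge\frac1d\sum_{v\mid\infty}d_v\,E(\sigma_v(c_1),\sigma_v(c_2))\ge c'\,h_\infty(\alpha)\ge c'\eta$. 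Otherwise $h(\alpha)<2\eta$; writing $n:=\sum_{v\mid\infty,\ |\alpha|_v\ge1/2}d_v$, the product formula and the estimates $\exp(-d\,h_{\mathrm{fin}}(\alpha))\le\prod_{v\mid\infty}|\alpha|_v^{d_v}=|N_{K/\Q}(\alpha)|\le 2^{-(d-n)}\exp(d\,h_\infty(\alpha))$ give $(d-n)\log2\le d\,h(\alpha)<2\eta d$, hence $n\ge d(1-2\eta/\log2)\ge d/2$; then the first bound of \eqref{pairing-arch} yields $\<f_{c_1},f_{c_2}\>\ge\frac1d\sum_{v\mid\infty,\ |\alpha|_v\ge1/2}d_v\,\delta_1=\delta_1 n/d\ge\delta_1/2$. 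So $\delta:=\min\{\eta/4,\ c'\eta,\ \delta_1/2\}>0$ works, and since every ingredient is effective, so is $\delta$. As indicated, the one genuinely difficult step is the archimedean estimate \eqref{pairing-arch}; everything else is bookkeeping or an explicit Berkovich-tree computation, with the residue-characteristic-$2$ case of \eqref{pairing-finite} as a minor technicality.
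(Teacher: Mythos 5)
Your global architecture is reasonable --- the identity $I_v=\tfrac12\|\mu_{c_1,v}-\mu_{c_2,v}\|_v^2$, the finite-place bounds in terms of $\log^+|c_1-c_2|_v$, and the product-formula case split (which is a clean alternative to the paper's compactness-and-contradiction bookkeeping in \S\ref{proof of delta}) --- but there is a genuine gap exactly where you flag one, and it is the heart of the theorem. Both archimedean claims in your display are unproved. The second one is not ``easy'': the escape argument $\lambda_a\geq\tfrac12\log\max(|a|,|b|)-O(1)$ on the support of $\mu_b$ only works when $|a-b|\gtrsim\max(|a|,|b|)$, so that the first iterate already escapes. In the range $2<|a-b|\ll\max(|a|,|b|)^{1/2}$ with $|a|\asymp|b|$ large, a point of $J_b$ is mapped by $f_a$ back to modulus $\approx|b|^{1/2}$, where cancellation with $a$ can occur, and whether the orbit escapes depends on the fine position of $J_b$ relative to $J_a$ --- the same difficulty as your first claim, not a separate easy case. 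For the first claim, your renormalization sketch is only a sketch: after rescaling by $\sqrt{-b}$ the two cluster centers differ by $O(|a-b|/|b|^{1/2})\to 0$, so the two rescaled pictures coincide to first order and the asserted ``strictly positive inter-cluster constant'' must be extracted from the next levels of the Cantor structure; that is precisely the statement requiring proof, and you concede it is not carried out.

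For comparison, the paper resolves exactly this regime in Theorem \ref{archimedeanbounds} (Cases 2 and 3) by locating the second and third preimages of $0$ for both maps via the explicit expansions \eqref{binomial estimate} and \eqref{more delicate}, together with the escape-control Lemma \ref{escape control} and Koebe distortion: half of the mass of $\mu_{c_2}$ is shown to sit at definite distance, at scale $|2c|^{-1/2}$ or $|2c|^{-1}$, from the corresponding preimage disks for $c_1$, which gives $E_\infty(c_1,c_2)\geq\frac{1}{64}\log\max\{|c_1|,|c_2|\}$ whenever $|c_1-c_2|\geq 3/\max\{|c_1|,|c_2|\}^{1/2}$ --- a bound growing with the parameters, much stronger than the constant $\delta_1$ you postulate, and it is what the paper's proofs of Theorem \ref{uniformbound} (non-effective version in \S\ref{proof of delta}, effective version via Proposition \ref{close and bounded c} for bounded parameters) actually use. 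Your non-archimedean ``short calculation'' also hides real content: when $|c_1|_v=|c_2|_v$ the branches of the two Julia trees can partially coincide, and the paper needs the residue-field symmetry argument (Cases 2 and 4 of Theorem \ref{not 2 bound}, Cases 3 and 5 of Theorem \ref{p2}) to salvage half of the measure; still, the bound you assert there is the one the paper proves. As it stands, however, the decisive archimedean estimate is asserted rather than proved, so the proposal is not a proof.
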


\noindent 
In other words, two Julia sets cannot be too similar at {\em all} places of a given number field.  See \S\ref{effectiveness} for comments on the magnitude of $\delta$ and the constants in the following theorem.

\begin{theorem}\label{pairingbound}
There are constants $\alpha_1, \alpha_2, C_1, C_2 >0 $ so that 
   $$   \alpha_1 \, h(c_1, c_2)-C_1 \leq \<f_{c_1},f_{c_2}\>\leq \alpha_2 \, h(c_1,c_2)+C_2,$$
for all $c_1\neq c_2$ in $\Qbar$, where $h$ is the logarithmic Weil height on $\A^2(\Qbar)$.
\end{theorem}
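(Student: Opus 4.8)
The plan is to expand the pairing into its local contributions and estimate each one in terms of the $v$-adic sizes of $c_1$, $c_2$, and $c_1-c_2$. Writing $w_v=\frac{[K_v:\Q_v]}{[K:\Q]}$, we have $\langle f_{c_1},f_{c_2}\rangle=\sum_{v\in M_K}w_v\,I_v$ with
$$I_v:=\int_{\A^{1,\mathrm{an}}_v}\lambda_{c_1,v}\,d\mu_{c_2,v}=\iint\log|z-w|_v\,d\mu_{c_1,v}(w)\,d\mu_{c_2,v}(z),$$
the second equality holding because $f_c$ is monic, so its filled Julia set $\mathcal K_{c,v}$ has capacity $1$ and $\lambda_{c,v}$ is the logarithmic potential of $\mu_{c,v}$; in particular every $I_v\ge 0$. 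The inputs I would assemble first are the explicit $v$-adic pictures for $f_c(z)=z^2+c$. At a finite $v\nmid 2$: if $|c|_v\le 1$ then $\mathcal K_{c,v}=\{|z|_v\le 1\}$ and $\mu_{c,v}$ is the Gauss point mass; if $|c|_v>1$ then $\mathcal K_{c,v}$ lies on the sphere $\{|z|_v=|c|_v^{1/2}\}$ and splits into two ``halves'' near $\pm\sqrt{-c}$, one has $\lambda_{c,v}(z)=\log^+|z|_v$ for $z$ off that sphere, and $\lambda_{c,v}\equiv\tfrac{1}{2}\log|c|_v$ on $\{|z|_v<|c|_v^{1/2}\}$. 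At an archimedean place $\mathcal K_{c,v}\subset\{|z|_v\le 1+|c|_v^{1/2}\}$, and for $|c|_v$ large $\mathcal K_{c,v}$ is again a two-piece hyperbolic set. The finitely many places above $2$ behave like the odd finite places up to errors $O(\log^+|2|_v^{-1})$.

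\emph{Upper bound.} Since $\lambda_{c_1,v}$ is subharmonic, $I_v\le\sup_{z\in\mathcal K_{c_2,v}}\lambda_{c_1,v}(z)$, and the descriptions above bound this supremum by $\tfrac{1}{2}\log^+\max\{|c_1|_v,|c_2|_v\}+\epsilon_v$ with $\epsilon_v=0$ for $v\nmid 2\infty$ and $\sum_v w_v\epsilon_v\le C_2$. Summing gives $\langle f_{c_1},f_{c_2}\rangle\le\tfrac{1}{2}h(c_1,c_2)+C_2$, so $\alpha_2=\tfrac{1}{2}$ works. (The bound $\langle f_{c_1},f_{c_2}\rangle\le\tfrac{1}{2}(h(c_1)+h(c_2))+O(1)$ also drops out of $\langle f_{c_1},f_{c_2}\rangle=\lim_n 2^{-n}\sum_{f_{c_2}^n(y)=0}\widehat{h}_{f_{c_1}}(y)$, together with $|\widehat{h}_{f_c}-h|\le\tfrac{1}{2}h(c)+O(1)$ and $\sum_{f_{c_2}^n(y)=0}\widehat{h}_{f_{c_2}}(y)=\widehat{h}_{f_{c_2}}(0)$.)

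\emph{Lower bound.} Here positivity of the $I_v$ is not enough; one must extract, place by place, a term comparable to the local contribution $\log^+\max\{|c_1|_v,|c_2|_v\}$ to $h(c_1,c_2)$. Three local facts drive the argument. (i) At a finite $v\nmid 2$ at which one of $|c_1|_v,|c_2|_v$ strictly dominates the other, or one of them is $\le 1$, the support of $\mu_{c_2,v}$ meets $\lambda_{c_1,v}$ only where the latter is explicit, and one gets $I_v=\tfrac{1}{2}\log^+\max\{|c_1|_v,|c_2|_v\}$; a classical escape-rate estimate gives the same up to $O(1)$ at archimedean $v$ with $|c_1|_v,|c_2|_v$ widely separated. (ii) At every place $I_v\ge\tfrac{1}{8}\log^+|c_1-c_2|_v$: in the remaining case $|c_1|_v=|c_2|_v>1$ with $|c_1-c_2|_v>1$, tracking the growth of $f_{c_1}^n(z)-f_{c_2}^n(z)$ along an orbit $z\in\mathcal K_{c_2,v}$ (it multiplies by $\asymp|c_2|_v^{1/2}$ until it escapes) gives $\lambda_{c_1,v}(z)\ge\tfrac{1}{8}\log|c_2|_v\ge\tfrac{1}{8}\log|c_1-c_2|_v$, so $\langle f_{c_1},f_{c_2}\rangle\ge\tfrac{1}{8}\,h(c_1-c_2)$. (iii) At \emph{every} place the same orbit comparison gives $I_v\ge c_*\log^+\max\{|c_1|_v,|c_2|_v\}$ for an absolute $c_*>0$, \emph{unless} $|c_1-c_2|_v<\max\{|c_1|_v,|c_2|_v\}^{-c'}$ for an absolute $c'>0$; at such exceptional places $\log\max\{|c_1|_v,|c_2|_v\}<\tfrac{1}{c'}\bigl|\log|c_1-c_2|_v\bigr|$, so they contribute at most $\tfrac{2}{c'}h(c_1-c_2)$ to $h(c_1,c_2)=\sum_v w_v\log^+\max\{|c_1|_v,|c_2|_v\}$, whence $\langle f_{c_1},f_{c_2}\rangle\ge c_*\bigl(h(c_1,c_2)-\tfrac{2}{c'}h(c_1-c_2)\bigr)$. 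Taking the better of (ii) and (iii),
$$\langle f_{c_1},f_{c_2}\rangle\;\ge\;\max\Bigl\{\tfrac{1}{8}h(c_1-c_2),\ c_*\,h(c_1,c_2)-\tfrac{2c_*}{c'}h(c_1-c_2)\Bigr\}\;\ge\;\alpha_1\,h(c_1,c_2),$$
with an explicit $\alpha_1>0$ (the worst case being $h(c_1-c_2)$ a fixed fraction of $h(c_1,c_2)$); the $O(1)$ corrections from the archimedean and dyadic places go into $C_1$.

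The step I expect to be the real obstacle is the local lower bound (iii), together with its archimedean instance, because when $|c_1|_v\asymp|c_2|_v$ is large and $c_1$ is $v$-adically close to $c_2$ the two Julia sets nearly coincide and $\lambda_{c_1,v}$ can be very small on $\mathcal K_{c_2,v}$. What rescues it is that $\mathcal K_{c,v}$ is, for $|c|_v$ large, a hyperbolic Cantor set whose two halves are $|c|_v^{-1/2}$-contracted copies of the whole, so that $\lambda_{c,v}$ is H\"older of exponent $\beta_v\asymp\frac{\log 2}{\log\sqrt{|c|_v}}$ and $\beta_v\log\sqrt{|c|_v}\asymp\log 2$ stays bounded away from $0$; this forces $\lambda_{c_1,v}$ to remain $\gtrsim\frac{1}{C}\log|c_1|_v$ on an $|c_1|_v^{-O(1)}$-neighborhood of $\mathcal K_{c_1,v}$, which contains $\mathcal K_{c_2,v}$ whenever $|c_1-c_2|_v\ge|c_1|_v^{-c'}$. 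This is exactly the sort of distortion estimate the paper advertises; making it effective at the archimedean places with an explicit $\beta_v$, and a careful (but routine) treatment of the places above $2$, would finish the proof, and it is also what produces the explicit constants in the statement.
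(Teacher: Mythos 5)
Your overall architecture coincides with the paper's: the upper bound is obtained exactly as in Theorem \ref{weak pairingbound}, and your lower-bound scheme --- a place-by-place dichotomy according to whether $|c_1-c_2|_v$ exceeds a fixed negative power of $\max\{|c_1|_v,|c_2|_v\}$, with the ``close'' places controlled through the product formula via $h(c_1-c_2)$ and then balanced against the weak bound $\<f_{c_1},f_{c_2}\>\gtrsim h(c_1-c_2)-O(1)$ --- is an equivalent repackaging of the paper's decomposition into helpful/close/bounded places and Lemma \ref{most are helpful}. So the global bookkeeping is sound.

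The genuine gap is your local input (iii) (and the borderline cases of (ii)): the claim that $\int \lambda_{c_1,v}\,d\mu_{c_2,v}\ge c_*\log r$ whenever $r=|c_1|_v=|c_2|_v$ is large and $|c_1-c_2|_v\ge r^{-c'}$. This is precisely the hard content of the second statements of Theorems \ref{archimedeanbounds}, \ref{not 2 bound} and \ref{p2}, and your sketch for it does not work as stated. First, the separation claim is false: it is not true that the filled Julia set of $f_{c_2}$ avoids an $r^{-O(1)}$-neighborhood of that of $f_{c_1}$ once $|c_1-c_2|_v\ge r^{-c'}$. Distinct Julia sets can intersect, and at a non-archimedean place they can share whole branches even when $|c_1-c_2|_v$ is large; e.g.\ with $v\nmid 2$ and $|c_1-c_2|_v=r^{1/2}$ one may have $D(\beta_1,1)=D(\beta_2',1)$ (Case 2 in the proof of Theorem \ref{not 2 bound}), and archimedean Julia sets of distinct parameters can overlap in infinite sets. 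Hence no pointwise lower bound on $\lambda_{c_1,v}$ over all of $J_{c_2,v}$ is available; what must be proved --- and what the paper proves --- is the measure-theoretic statement that at least half (or a quarter) of the mass of $\mu_{c_2,v}$ lies at a definite distance from $J_{c_1,v}$, which requires the $\pm$ symmetry of the Julia sets together with explicit control of the second and third preimages of $0$ (the expansions \eqref{binomial estimate} and \eqref{more delicate}, the points $\beta,\beta',s,s',\gamma,\gamma'$, and Koebe-type distortion at the archimedean place). Second, the H\"older heuristic points the wrong way: H\"older continuity of $\lambda_{c_1,v}$ with exponent $\asymp \log 2/\log\sqrt r$ bounds $\lambda_{c_1,v}$ from \emph{above} near $J_{c_1,v}$ (this is the content of Propositions \ref{archimedean epsilon}, \ref{escapebound pnot2}, \ref{escapebound p2}, used for Theorem \ref{boundingN}, not for the present lower bound), and $\lambda_{c_1,v}$ vanishes on the filled Julia set, so it cannot be $\gtrsim \frac1C\log r$ on a neighborhood of it; the lower bound you need off a small neighborhood is instead the escape-control estimate of Lemma \ref{escape control}, which only applies outside the disks around the preimages of $0$ --- and deciding how much of $\mu_{c_2,v}$ lies outside those disks is exactly the overlap analysis above. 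Until that step is supplied, the lower bound of the theorem is not established.
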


The upper bound on $\<f_{c_1},f_{c_2}\>$ in Theorem \ref{pairingbound} is straightforward to prove, and it is also fairly easy to obtain a weaker lower bound in terms of $h(c_1-c_2)$, the Weil height of the difference, in place of the height $h(c_1, c_2)$; see Theorem \ref{weak pairingbound}.  The lower bound of Theorem \ref{pairingbound} is more delicate: see Section \ref{stronglowerbound}.

Finally, we relate the energy pairing to the number of common preperiodic points via a quantified version of the arithmetic equistribution theorems, building upon ideas of Favre, Rivera-Letelier, and Fili \cite{FRL:equidistribution} \cite{Fili:energy}:

\begin{theorem} \label{boundingN} 
For all $0 < \eps < 1$, there exists a constant $C(\eps) > 0$ so that
$$\<f_{c_1},f_{c_2}\> \leq \left( \eps + \frac{C(\eps)}{N(c_1, c_2)-1} \right) (h(c_1, c_2) + 1)$$
for all $c_1 \not= c_2$ in $\Qbar$ with 
	$$N(c_1, c_2) := |\Preper(f_{c_1}) \cap \Preper(f_{c_2})| > 1.$$
\end{theorem}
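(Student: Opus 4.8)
The plan is to bound the energy pairing $\langle f_{c_1}, f_{c_2}\rangle$ from above using a quantitative form of arithmetic equidistribution applied to a large Galois orbit inside $\Preper(f_{c_1}) \cap \Preper(f_{c_2})$. First I would fix a point $z_0$ in the common intersection and let $K = \Q(c_1,c_2,z_0)$, so that $z_0$ is preperiodic — hence of canonical height zero — for both $f_{c_1}$ and $f_{c_2}$. The Galois orbit $O$ of $z_0$ over $\Q(c_1,c_2)$ has size at most $N(c_1,c_2)$ but, crucially, is contained in $\Preper(f_{c_1}) \cap \Preper(f_{c_2})$, since the preperiodic set of a polynomial defined over $\Q(c_1,c_2)$ is Galois-stable; so we have a single finite set of algebraic points, all of height zero with respect to two distinct canonical heights simultaneously. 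The goal is then to show that such a set cannot be too large relative to $h(c_1,c_2)$, unless the two measures are close at every place.

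The technical heart is a quantitative equidistribution estimate in the style of Favre--Rivera-Letelier and Fili. For each place $v$ of $K$, let $[O]_v$ denote the probability measure equidistributed on the Galois conjugates of $z_0$ (in $\A^{1,an}_v$), and consider the $v$-adic energy (Arakelov--Green / mutual energy) of the signed measure $[O]_v - \mu_{c_i,v}$. The key inputs are: (i) since $\hhat_{c_i}(z_0) = 0$, the ``height'' of $O$ computed via $\lambda_{c_i,v}$ vanishes, which forces $\sum_v \frac{[K_v:\Q_v]}{[K:\Q]} (\text{self-energy of } [O]_v - \mu_{c_i,v})$ to be controlled by $O(\log |O| / |O|)$ plus an error depending on $h(c_1,c_2)$ coming from the Mahler-measure/resultant discrepancy between the naive and canonical metrics at each place (finitely many bad places, each contributing $O(h)$); and (ii) Cauchy--Schwarz (positivity of the energy pairing, as cited from \cite{PST:pairing}) applied to the bilinear pairing of $[O]_{\bullet} - \mu_{c_1,\bullet}$ against $[O]_{\bullet} - \mu_{c_2,\bullet}$, which lets me compare $\langle f_{c_1}, f_{c_2}\rangle = \sum_v \frac{[K_v:\Q_v]}{[K:\Q]}\int \lambda_{c_1,v}\,d\mu_{c_2,v}$ to the product of the two self-energy quantities. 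Concretely, writing $D_i := [O]_{\bullet} - \mu_{c_i,\bullet}$ as an adelic signed measure, one expands $\langle D_1, D_2\rangle$ and rearranges to get $\langle f_{c_1},f_{c_2}\rangle \le |\langle D_1,D_2\rangle| + (\text{terms involving } \langle [O],\mu_{c_i}\rangle)$; the diagonal terms $\langle D_i,D_i\rangle$ are the self-energies bounded in (i), the cross term is $\le \langle D_1,D_1\rangle^{1/2}\langle D_2,D_2\rangle^{1/2}$ by Cauchy--Schwarz, and the mixed terms $\int \lambda_{c_i,v}\,d[O]_v$ are again essentially heights of $z_0$, hence small. Collecting everything yields
\[
\langle f_{c_1},f_{c_2}\rangle \;\le\; \Big(\text{const}\cdot \tfrac{\log N}{N-1} \;+\; o(1)\Big)\,(h(c_1,c_2)+1),
\]
and then an elementary manipulation — splitting according to whether $N-1$ is large or small compared to $1/\eps$, and absorbing $\frac{\log N}{N-1}$ into $\eps + C(\eps)/(N-1)$ using that $\frac{\log t}{t} \to 0$ — converts this into the stated form with the explicit $\eps$ and $C(\eps)$.

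The main obstacle I anticipate is bookkeeping the contributions at the finitely many ``bad'' places (those dividing the resultant of the relevant polynomials, the places of bad reduction, and archimedean places) so that their total is genuinely $O(h(c_1,c_2)+1)$ with a clean constant, rather than something depending uncontrollably on $K$ or on the splitting of $z_0$. This requires a uniform (in the point $z_0$ and the field $K$) comparison between the Berkovich metric induced by $\lambda_{c,v}$ and the standard one, together with a uniform lower bound on the Arakelov--Green self-energy that does not degrade as $[K:\Q]$ grows — this is precisely where the quantitative equidistribution inequality of \cite{FRL:equidistribution} (and its refinement in \cite{Fili:energy}) is indispensable, and I would need to check that its error terms can be expressed purely in terms of $N$, $h(c_1,c_2)$, and absolute constants. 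A secondary subtlety is that $O$ could in principle be much smaller than $N(c_1,c_2)$ if $z_0$ has small degree over $\Q(c_1,c_2)$; to handle this one chooses $z_0$ to be a common preperiodic point of \emph{maximal} degree, and uses that the full intersection is a union of such Galois orbits, so at least one orbit has size $\ge N/(\text{something})$ — or, more cleanly, one runs the estimate with the counting measure on the entire set $\Preper(f_{c_1})\cap\Preper(f_{c_2})$ directly, which is itself Galois-stable over $\Q(c_1,c_2)$, sidestepping the issue entirely and giving the $N-1$ in the denominator as stated.
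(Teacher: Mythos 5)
Your skeleton is the one the paper uses: take the Galois-stable set $F$ of common preperiodic points (so $|F|=N-1$ after discarding $\infty$), compare the adelic discrepancies $[F]-\mu_{c_i}$ via positivity of the energy pairing (equivalently the triangle inequality for Fili's metric $d(\mu,\nu)=\<h_\mu,h_\nu\>^{1/2}$, as in \eqref{triangle distance} and Lemma \ref{inequalityallplaces}), and finish with the elementary manipulation converting a bound of the shape $(\delta+C/(N-1))(h+1)$ into the stated one. But as written your argument has two genuine gaps. First, you pair the \emph{atomic} measure $[O]$ (or $[F]$) directly against $\mu_{c_i}$. The global diagonal-excluded energy form is not positive semi-definite on such atomic discrepancies, so your Cauchy--Schwarz step is not available; worse, if it were, your argument would prove too much. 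Indeed, since $\sum_v r_v([F],[F])_v=0$ by the product formula, $(\mu_{c_i},\mu_{c_i})_v=0$, and $\sum_v r_v\int\lambda_{c_i,v}\,d[F]_v$ is the average canonical height of $F$, which vanishes because $F\subset\Preper(f_{c_i})$, the unregularized ``self-energies'' $\sum_v r_v([F]-\mu_{c_i},[F]-\mu_{c_i})_v$ are exactly $0$; Cauchy--Schwarz would then give $\<f_{c_1},f_{c_2}\>\le 0$, which is false for $c_1\ne c_2$. This is precisely why the paper (following \cite{FRL:equidistribution,Fili:energy}) regularizes, replacing $[F]$ by $[F]_\eps$ supported on circles of adelic radius $\eps_v$: the regularization restores continuous potentials (hence positivity and the triangle inequality), and its cost $([F]_\eps,[F]_\eps)_v\le([F],[F])_v+\log(1/\eps_v)/|F|$ is exactly where the $C(\eps)/(N-1)$ term comes from. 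The regularization is not optional bookkeeping; it carries the entire content of the bound.

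Second, the quantitative heart --- which you defer to ``checking'' the error terms of \cite{FRL:equidistribution,Fili:energy} --- is not available off the shelf, and your heuristic of ``finitely many bad places, each contributing $O(h)$'' does not match the actual structure. What is needed is a \emph{per-place} bound: with $\ell_v$ as in \S\ref{h_L}, one must show $-(\mu_{c_i},[F]_\eps)_v\le\delta\,\ell_v$ for a radius $\eps_v$ whose cost $\log(1/\eps_v)$ is itself $O(\ell_v\log(1/\delta))$, \emph{uniformly in $c_1,c_2$}; there can be arbitrarily many places with $\ell_v>0$, and only the proportionality to $\ell_v$ makes the sum come out as $\delta\,h_L(c_1,c_2)\le\delta(h(c_1,c_2)+O(1))$. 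The constants in the general quantitative equidistribution theorems depend on the H\"older regularity of the potentials of $\mu_{c,v}$, which degenerates as $|c|_v\to\infty$ (as the introduction of the paper notes), so citing them gives no uniformity in the parameters. The paper supplies this input by hand: Proposition \ref{archimedean epsilon} (Koebe-type distortion near the filled Julia set) and Propositions \ref{escapebound pnot2}, \ref{escapebound p2} (explicit Berkovich computations at finite places) show $\lambda_{c,v}(z)\le\delta\,\ell_v$ within distance roughly $e^{-3\ell_v\log(1/\delta)}$ of the filled Julia set, and the adelic radius is then chosen as $\eps_v=\delta^{3\ell_v}$ in Proposition \ref{upper bound with L}. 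Until you prove estimates of this kind (or extract them with explicit uniform dependence on $c$), the displayed inequality with an unspecified $o(1)$ does not yield the theorem, whose whole point is that the bound holds uniformly over all $c_1\ne c_2$ in $\Qbar$. (Your fallback of using the full intersection set rather than a single Galois orbit is indeed the right move and is what the paper does.)
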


\noindent
Note that $N(c_1, c_2) \geq 1$ for every $c_1$ and $c_2$, because $\infty$ is a fixed point for every $f_c$.
Using standard distortion estimates in complex analysis to control the archimedean contributions to the pairing, our proof shows that we can take 
	$$C(\eps) \asymp \log(1/\eps)$$
in Theorem \ref{boundingN}.

Theorems \ref{uniformbound},  \ref{pairingbound}, and \ref{boundingN} combine to give a uniform upper bound on the number $N(c_1, c_2)$ for all $c_1 \not= c_2$ in $\Qbar$, thus proving Theorem \ref{preperbound} for $c_1$ and $c_2$ and $\Qbar$.  Once a uniform bound is obtained over $\Qbar$, it is a standard specialization argument to show the same bound holds over $\C$, as we explain in \S\ref{proof over C}, which completes the proof of Theorem \ref{preperbound}.

\subsection{Comparison with \cite{DKY:UMM}}
This general strategy of proof was introduced in our earlier work \cite{DKY:UMM}, and the reader will recognize the similarities between the statements of Theorems \ref{uniformbound}, \ref{pairingbound}, and \ref{boundingN} here and Theorems 1.6, 1.7, and 1.8 of \cite{DKY:UMM}.  However, there are significant technical differences between the proofs, and, perhaps more importantly, the proof strategy in this article  is effective; we explain how to obtain a value for $B$ in Theorem \ref{preperbound}.  Regarding the technical aspects of the proofs, in the setting of \cite{DKY:UMM}, the energy integrals at non-archimedean places could be computed explicitly; here, we can only obtain estimates.  For the computations at the archimedean places, the local heights (escape rates of the polynomials) are not smooth for the polynomials considered here, and the shrinking H\"older exponents (as $c\to \infty$) leads to the loss of uniformity in rates of convergence in the equidistribution theorems.  We make use of classical complex dynamical methods in this article such as the Koebe 1/4-theorem; by contrast, in \cite{DKY:UMM}, we obtained the archimedean estimates through the use of degeneration theory and comparison to a limiting non-archimedean dynamical system associated to a function field, as carried out in \cite{Favre:degenerations} and \cite{DF:degenerations, DF:degenerations2}.  The degeneration theory might be used here as well, but at the expense of losing the effective bounds.

As in the setting of \cite{DKY:UMM}, our proofs are as much about the associated canonical height functions $\hat{h}_c$ on $\P^1(\Qbar)$, for $f_c$ with $c\in \Qbar$, as about preperiodic points; the bound of Theorem \ref{preperbound} comes from the fact that $\hat{h}_c(x) = 0$ if and only if $x$ is preperiodic for $f_c$ \cite[Corollary 1.1.1]{Call:Silverman}.  Though we do not provide all the details, it is possible to prove a stronger statement about points of small height:  there exist uniform constants $B$ and $b>0$ so that  $\left|\{x \in \P^1(\Qbar):  \; \hat{h}_{c_1}(x) + \hat{h}_{c_2}(x) \leq b\}\right| \; \leq\;  B$ for all $c_1 \not= c_2$ in $\Qbar$.  A version of this statement is proved for the Latt\`es family \eqref{Lattes} in \cite[Theorems 1.8 and 8.1]{DKY:UMM}.

\subsection{Effectiveness}  \label{effectiveness}
We illustrate the effectiveness of our method by providing explicit constants for each of the theorems stated above.   The proof of Theorem \ref{pairingbound} shows that we can take $\alpha_1 = 1/192$,  $C_1 =  3/17$, $\alpha_2 = 1/2$ and $C_2 = 7/3$.  The proof of Theorem \ref{boundingN} provides $C(\eps) = 40 \log(25/\eps)$.  The first proof of Theorem \ref{uniformbound} that we present in  \S\ref{weaklowerbound} is not sufficient to provide an explicit value for the $\delta$ of Theorem \ref{uniformbound}, but further control on the classical (archimedean) energy pairing leads to $\delta = 10^{-96}$ in \S\ref{explicit delta}.  This exceptionally small $\delta$ gives rise to the enormous bound $B = 10^{103}$ in Theorem \ref{preperbound} that was stated in Remark \ref{explicit B}.  Few examples of $\<f_{c_1}, f_{c_2}\>$ have been computed explicitly; it was recently shown that $\<f_0, f_{-1}\> \approx 0.168$ \cite[\S8]{Andrews:Petsche}, and it would be interesting to see other values.

\subsection{Height pairings}  
The energy pairing $\<f_{c_1}, f_{c_2}\>$ that we work with is a special case of a more general construction, the Arakelov-Zhang pairing, an arithmetic intersection number between adelically metrized line bundles; see \cite{Zhang:adelic}, \cite{PST:pairing}, and \cite{ChambertLoir:survey}.  In this case, each $f_c$ with $c$ in a number field $K$ gives rise to a family of metrics on $O_{\P^1}(1)$, one for each place $v$ of $K$, with non-negative curvature distribution equal to the canonical measure $\mu_{c,v}$ on the Berkovich projective line ${\bf P}^{1,an}_v$.  Each such adelic metric then gives rise to a height function $\hat{h}_{c}$ on $\P^1(\Qbar)$, recovering the dynamical canonical height for $f_c$ of Call and Silverman \cite{Call:Silverman}.

There are other natural height pairings that one could consider for $c_1, c_2\in \Qbar$.  For example, Kawaguchi and Silverman study
	$$[f,g]_{KS} := \sup_{x \in \P^1(\Qbar)} \left|\hat{h}_f(x) - \hat{h}_g(x) \right|$$
for any pair of maps $f,g: \P^1\to\P^1$ defined over $\Qbar$  \cite{Kawaguchi:Silverman:pairing}.  As a consequence of arithmetic equidistribution, we see that 
\begin{equation} \label{upper KS}
	\<f, g\> \leq [f,g]_{KS}.
\end{equation}
Indeed, along any infinite (non-repeating) sequence $x_n \in \P^1(\Qbar)$ for which $\hat{h}_f(x_n) \to 0$, we have by equidistribution that $\hat{h}_g(x_n) \to \< f, g\>$ \cite[Theorem 1]{PST:pairing}.  Such sequences always exist (the preperiodic points of $f$ will have height 0), so we obtain \eqref{upper KS}.  We do not know if a similar inequality always holds in the reverse direction for any pair of maps.  However, as a corollary of Theorem \ref{pairingbound}, we have

\begin{theorem} \label{KS comparison}
There exist constants $\alpha, C>0$ so that 
	$$\alpha [f_{c_1},f_{c_2}]_{KS}	- C \; \leq \; \<f_{c_1}, f_{c_2}\> \; \leq \; [f_{c_1}, f_{c_2}]_{KS}$$
for all $c_1, c_2 \in \Qbar$.
\end{theorem}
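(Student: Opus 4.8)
The plan is to obtain Theorem~\ref{KS comparison} as a quick consequence of Theorem~\ref{pairingbound}. The right-hand inequality $\<f_{c_1},f_{c_2}\>\le[f_{c_1},f_{c_2}]_{KS}$ is already recorded as \eqref{upper KS}, and when $c_1=c_2$ every quantity in the asserted chain is zero, so the only thing left is the left-hand inequality for $c_1\ne c_2$. Since Theorem~\ref{pairingbound} gives $\<f_{c_1},f_{c_2}\>\ge\alpha_1 h(c_1,c_2)-C_1$, it suffices to bound $[f_{c_1},f_{c_2}]_{KS}$ above by a linear function of $h(c_1,c_2)$.

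The one ingredient I would establish is the elementary height comparison $|\hat h_c(x)-h(x)|\le h(c)+\log 2$ for all $c\in\Qbar$ and $x\in\P^1(\Qbar)$, where $h$ is the usual Weil height on $\P^1$. First I would dispose of $x=\infty$, which is a fixed point with $\hat h_c(\infty)=h(\infty)=0$; for finite $x$ all iterates $f_c^n(x)$ remain finite, and I would use the telescoping identity
$$\hat h_c(x)-h(x)=\sum_{n\ge 0} 2^{-(n+1)}\bigl(h(f_c^{n+1}(x))-2h(f_c^n(x))\bigr)$$
together with the pointwise estimate $|h(y^2+c)-2h(y)|\le h(c)+\log 2$, which is immediate from the basic inequalities $h(a+b)\le h(a)+h(b)+\log 2$ and $h(y^2)=2h(y)$ (applied to $y^2+c$ and to $y^2=(y^2+c)+(-c)$); summing the geometric series finishes this step. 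Next, applying this comparison at $c_1$ and at $c_2$ and using the triangle inequality gives $|\hat h_{c_1}(x)-\hat h_{c_2}(x)|\le h(c_1)+h(c_2)+2\log 2$ uniformly in $x$; since $h(c_1,c_2)$, the Weil height of $[1:c_1:c_2]\in\P^2$, dominates both $h(c_1)$ and $h(c_2)$, I obtain
$$[f_{c_1},f_{c_2}]_{KS}=\sup_{x\in\P^1(\Qbar)}\bigl|\hat h_{c_1}(x)-\hat h_{c_2}(x)\bigr|\ \le\ 2\,h(c_1,c_2)+2\log 2.$$

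Finally I would combine the last display with the lower bound of Theorem~\ref{pairingbound}: substituting $h(c_1,c_2)\ge\tfrac12[f_{c_1},f_{c_2}]_{KS}-\log 2$ into $\<f_{c_1},f_{c_2}\>\ge\alpha_1 h(c_1,c_2)-C_1$ yields
$$\<f_{c_1},f_{c_2}\>\ \ge\ \tfrac{\alpha_1}{2}\,[f_{c_1},f_{c_2}]_{KS}\ -\ \bigl(\alpha_1\log 2+C_1\bigr),$$
so the theorem holds with $\alpha=\alpha_1/2>0$ and $C=\alpha_1\log 2+C_1>0$; with the explicit constants $\alpha_1=1/192$, $C_1=3/17$ coming from Theorem~\ref{pairingbound} this reads $\alpha=1/384$ and $C=(\log 2)/192+3/17$. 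I do not expect a serious obstacle here: the statement is essentially a reformulation of Theorem~\ref{pairingbound}, and the only point needing a little care is the normalization of $h(c_1,c_2)$ used to pass from $h(c_1)+h(c_2)$ to $h(c_1,c_2)$ — any standard choice alters the two sides by a bounded amount that can be absorbed into $C$.
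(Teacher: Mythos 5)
Your proposal is correct and follows the same route as the paper: both reduce the left-hand inequality to the bound $[f_{c_1},f_{c_2}]_{KS}\leq O(h(c_1)+h(c_2))\leq O(h(c_1,c_2))$ combined with the lower bound of Theorem \ref{pairingbound}, and both use \eqref{upper KS} for the right-hand inequality. The only difference is that the paper quotes Kawaguchi--Silverman's Theorem 1 for that bound, whereas you prove the needed estimate $|\hat h_c(x)-h(x)|\leq h(c)+\log 2$ directly by telescoping, which is a correct (and explicitly quantified) self-contained substitute for the citation.
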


\begin{proof}
From \cite[Theorem 1]{Kawaguchi:Silverman:pairing}, we have $[f_{c_1},f_{c_2}]_{KS} \leq \kappa_1 (h(c_1) + h(c_2)) + \kappa_2$ for constants $\kappa_1, \kappa_2$ depending only on the degrees of the maps, and the definition of the Weil height shows that $h(c_1) + h(c_2) \leq 2 h(c_1, c_2)$.  The lower bound of the theorem then follows immediately from the lower bound in Theorem \ref{pairingbound}.
\end{proof}

A version of Theorem \ref{KS comparison} also holds for the Latt\`es family $f_t(z) = (z^2-t)^2/(4z(z-1)(z-t))$, with $t_1, t_2 \in \Qbar\setminus\{0,1\}$, as a consequence of \cite[Theorem 1.5]{DKY:UMM}.

\begin{question}
Do we have 
	$$\<f,g\> \asymp [f,g]_{KS}$$
for all maps $f,g: \P^1\to \P^1$, defined over $\Qbar$, with constants depending only on the degrees of $f$ and $g$?
\end{question}

\subsection{Outline}
Section \ref{examples} illustrates some basic examples towards understanding the content of Theorem \ref{preperbound}.  Local estimates on the pairing are carried out in Sections \ref{archestimates} -- \ref{2p}.  In Section \ref{pairingboundsection}, we prove Theorem \ref{uniformbound}, and in Section \ref{stronglowerbound} we prove Theorem \ref{pairingbound}.  Theorem \ref{boundingN} is proved via quantitative equidistribution theory in Section \ref{QE}, and Section \ref{mainproof} establishes our main result, Theorem \ref{preperbound}.  Finally, in Section \ref{effective} we make all bounds effective.

\subsection{Acknowledgements} 
We thank the American Institute of Mathematics, where the initial work for this paper took place as part of an AIM SQuaRE.  We thank Khoa Nguyen, especially for ideas related to the proof of Theorem \ref{uniformbound}, and we thank Joe Silverman for helpful conversations.  We also thank Hang Fu and the anonymous referees for catching errors in an earlier version and providing many helpful suggestions.  During the preparation of this paper, L.~DeMarco was supported by the National Science Foundation (DMS-1600718, DMS-1856103, DMS-2050037), H.~Krieger was partially supported by Isaac Newton Trust (RG74916), and H.~Ye was partially supported by ZJNSF (LR18A010001) and NSFC (11701508).

%%%%%%
%%%%%%
\bigskip
\section{Basic examples}
\label{examples}

Let $f_c(z) = z^2 + c$, for $c \in \C$.  Note that 
	$$| \Preper(f_{c_1}) \cap \Preper(f_{c_2})| \geq \; 1$$
for every pair, because the sets always contain the point at $\infty$.  Here we provide a few simple examples, illustrating some of the ideas that appear in our proof of Theorem \ref{preperbound}.  Recall that the \emph{filled Julia set} of $f_c$ is 
	$$K(f_c) = \left\{z \in \C ~:~  \sup_{n\geq 1} |f^n_c(z)|  < \infty\right\},$$
and the Julia set satisfies $J(f_c) =  \partial K(f_c)$.  

\subsection{Disjoint filled Julia sets}
When two quadratic polynomials $f_{c_1}$ and $f_{c_2}$ have disjoint filled Julia sets, they have no common preperiodic points other than $\infty$.  Sometimes the filled Julia sets have nontrivial intersection in $\C$, but -- for algebraic parameters -- it may be that the $v$-adic filled Julia sets are disjoint at some place $v$.  In that case, again, there can be no common preperiodic points other than $\infty$.  Examples are shown in Figures \ref{over C} and \ref{at 5}.  As we shall explain in the following sections, the filled Julia set of $f_c$ at any non-archimedean place $v$ (defined as the set of points with bounded orbit in the Berkovich affine line ${\bf A}^1_v$, over the field $\C_v$) with $|c|_v >1$ is a subset of $\{z \in {\bf A}^1_v: |z|_v = |c|_v^{1/2}\}$, while it is the closed unit disk whenever $|c|_v \leq 1$.  

\begin{figure} [h]
\includegraphics[width=1.95in]{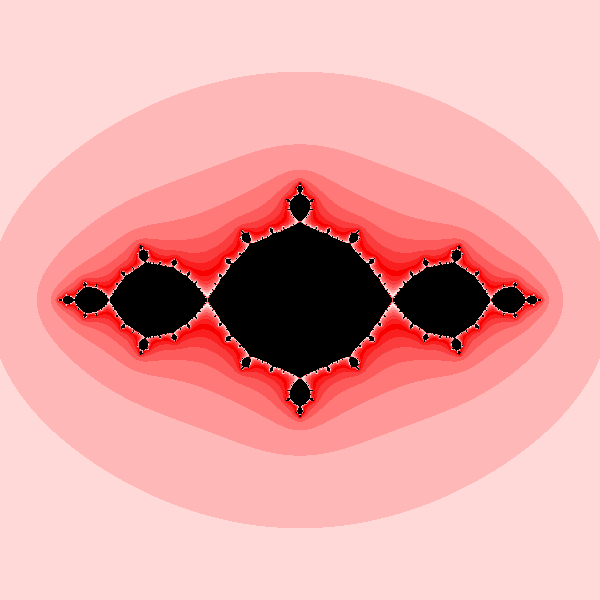}
\includegraphics[width=1.95in]{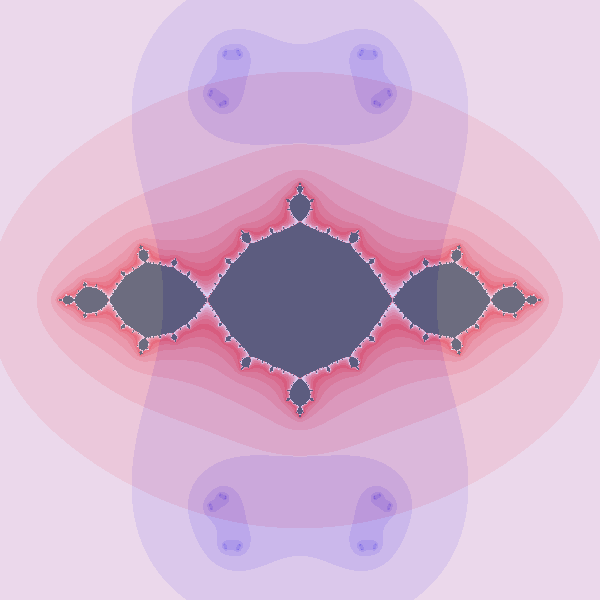}
\includegraphics[width=1.95in]{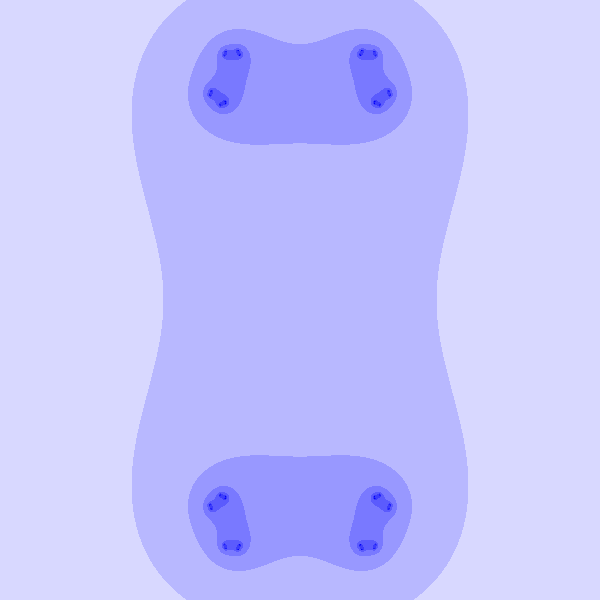}
\caption{ \small The filled Julia sets of $f(z) = z^2 - 1$ (left) and $g(z) = z^2 +2$ (right) are disjoint; they have no common preperiodic points except for $\infty$.}
\label{over C}
\end{figure}

\begin{figure} [h]
\includegraphics[width=1.95in]{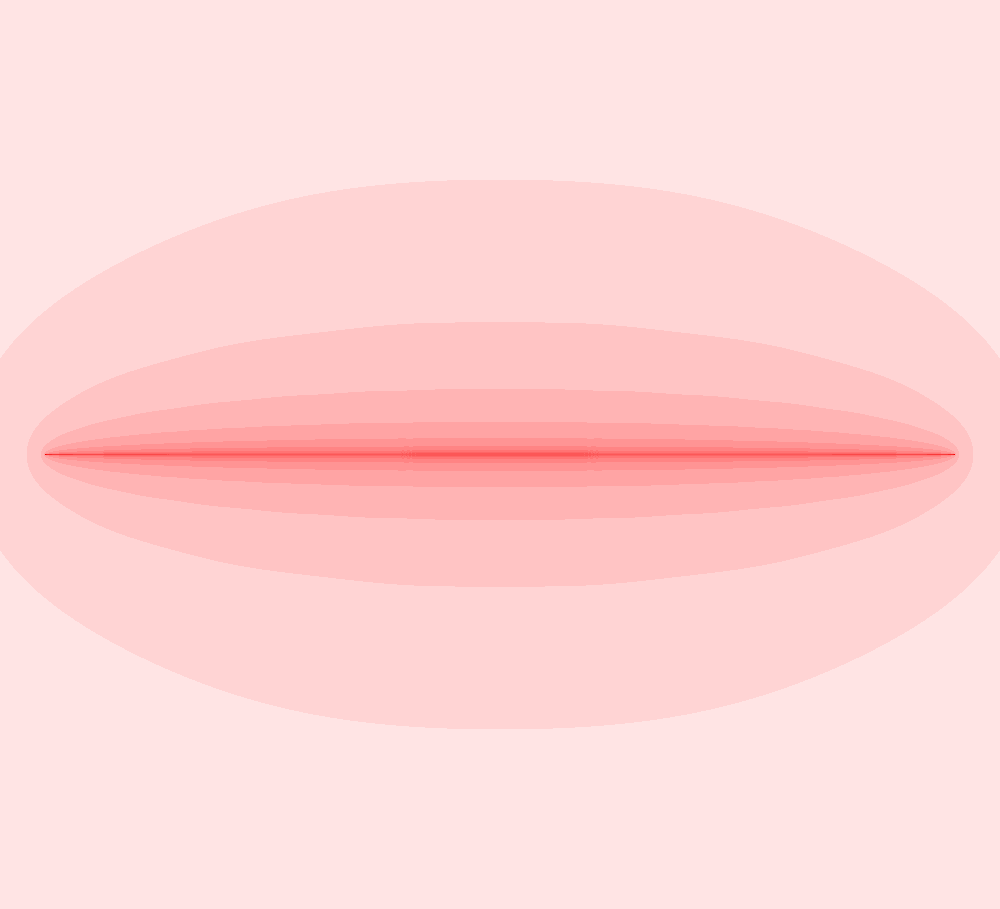}
\includegraphics[width=1.95in]{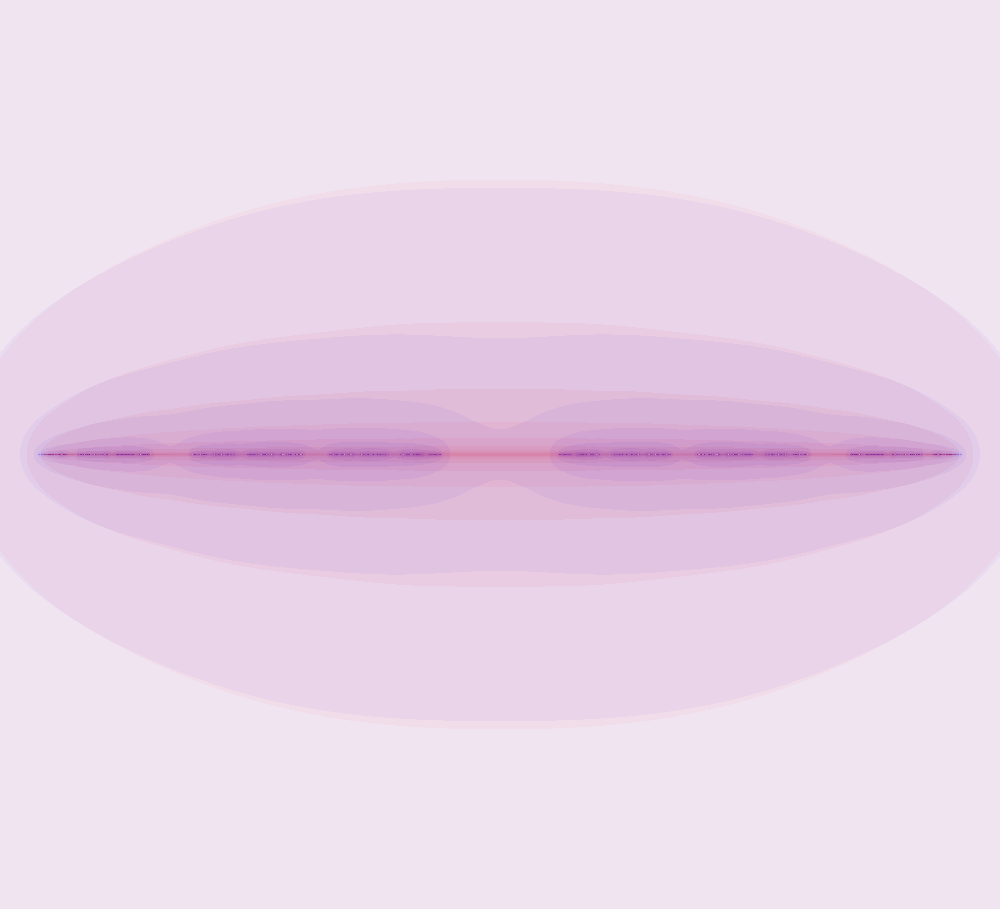}
\includegraphics[width=1.95in]{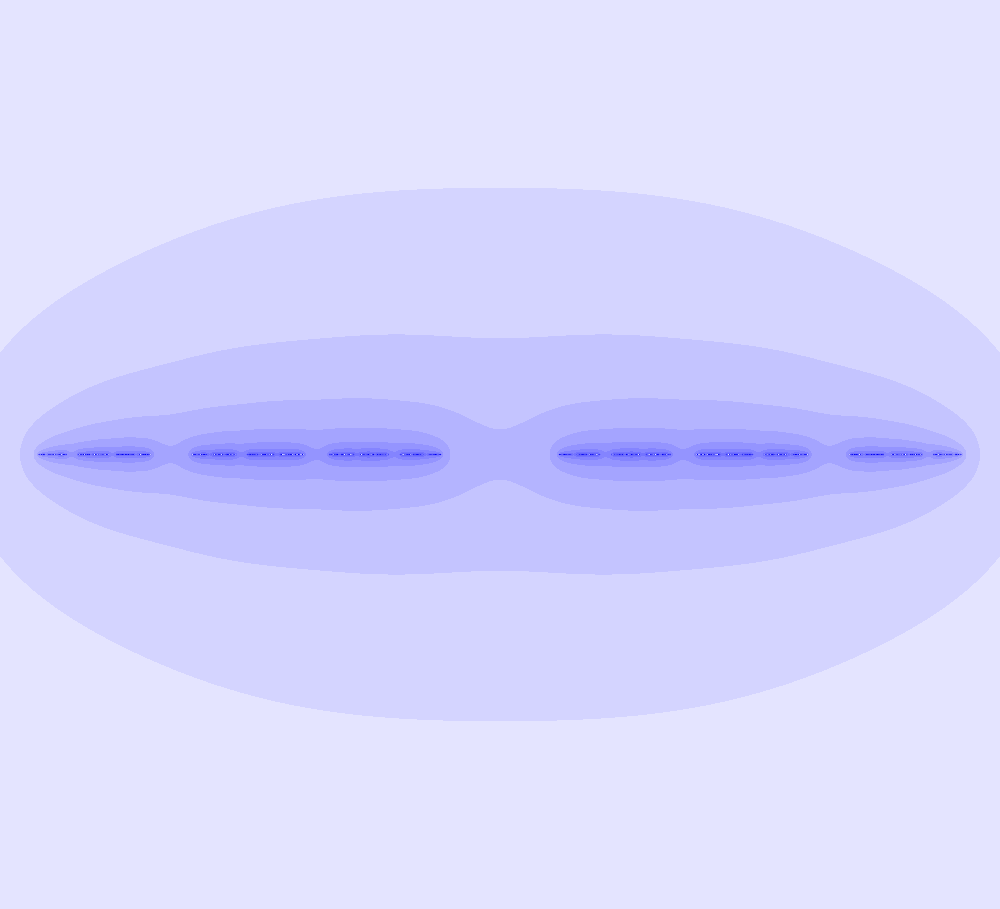}\\
\caption{ \small The filled Julia sets of $f(z) = z^2 - 2$ (left) and $g(z) = z^2 -2.1$ (right) have significant overlap in $\C$, but there are no common preperiodic points except for $\infty$, because the filled Julia sets are disjoint at the primes 2 and 5.}
\label{at 5}
\end{figure}

\subsection{Galois orbits}
Let $f(z) = z^2$ and $g(z) = z^2 - 1$; see Figure \ref{circle}.  Here we show that
		$$ \Preper(f) \cap \Preper(g) \; = \; \{0,1, -1, \infty\}.$$
We know that the preperiodic points of $f$ are the roots of unity, together with 0 and $\infty$.  The preperiodic points of any $f_c$ are roots of the polynomial equations given by $f_c^n(z) = f^m_c(z)$ for any $n > m \geq 0$; so the set of preperiodic points is invariant under the action of $\Gal(\Kbar/K)$, whenever $c$ lies in a number field $K$.  In this case, we can take $K = \Q$.  So we need to show that for all $n\geq 3$, at least one of the primitive $n$-th roots of unity will have infinite forward orbit under the action of $g$.

\begin{figure} [h]
\includegraphics[width=3.49in]{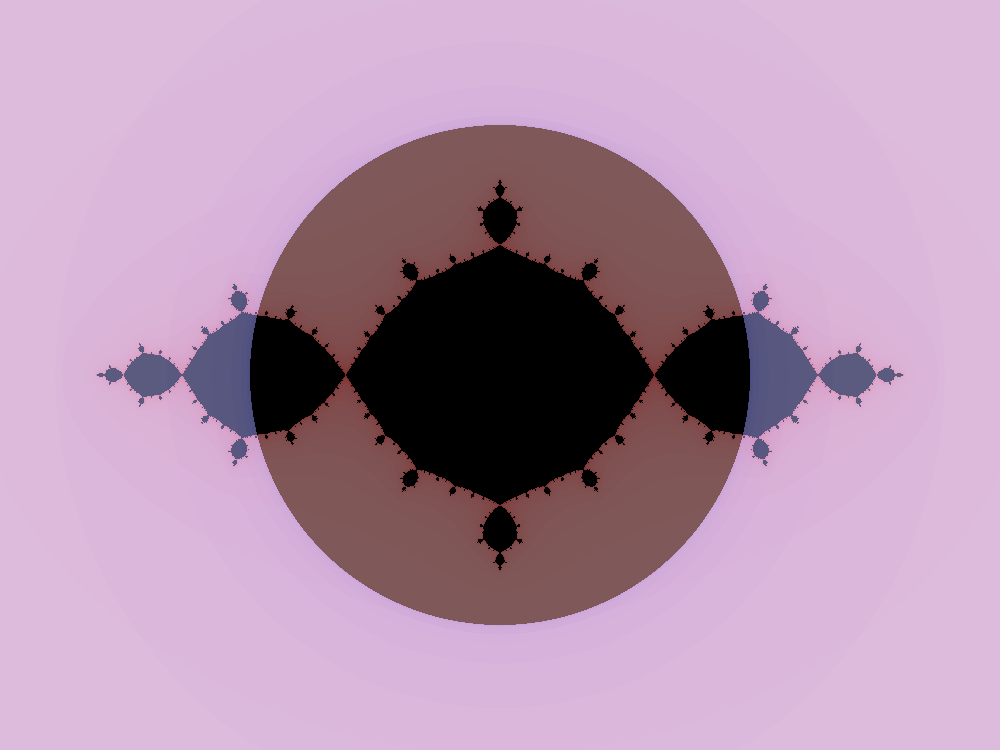}
\includegraphics[width=2.4in]{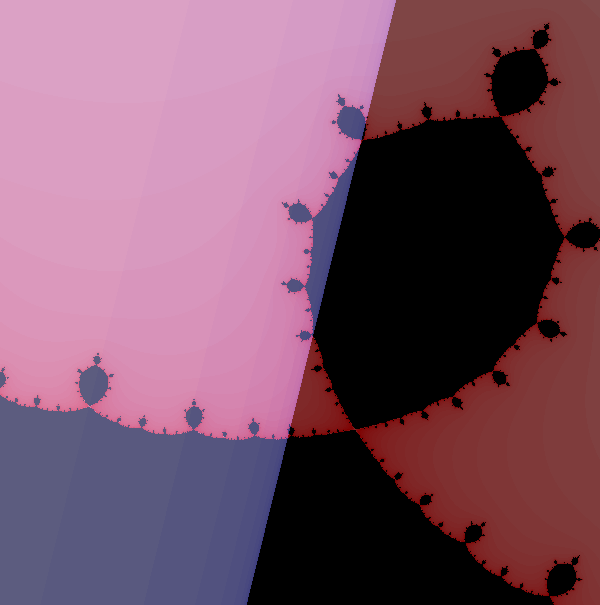}
\caption{ \small Filled Julia sets of $f(z) = z^2$ and $g(z) = z^2 - 1$, superimposed.  At right, a zoom of the intersection of their boundaries, suggesting a possibly infinite overlap of Julia sets.}
\label{circle}
\end{figure}

The proof is elementary and has two steps:
\begin{enumerate}
\item  Show that the subset of unit circle
	$$S = \{e^{2\pi i t}:  t \in [0, 1/30] \cup [1/12, 5/12]\}$$ 
lies in the Fatou set for $g$; and
\item for every $n\geq 3$, the set $S$ contains at least one primitive $n$-th root of unity.
\end{enumerate}
Step (1) follows from a series of simple estimates, examining how $g$ acts on arcs of the unit circle.  Step (2) can be checked by hand by observing that for each $12 < n < 30$, there is some $k$ with $(k, n) = 1$ and $k/n \in [1/12, 5/12]$.

\subsection{The Julia sets are distinct}\label{unique}
It is well known that, for any polynomial, all but finitely many of the periodic points of $f$ will be contained in its Julia set, $J(f)$ is the accumulation set of the (pre)periodic points of $f$, and all of the preperiodic points (other than $\infty$) form a subset of the filled Julia set.  Therefore 
	$$\Preper(f_{c_1}) = \Preper(f_{c_2}) \implies J(f_{c_1}) = J(f_{c_2})$$
for any $c_1, c_2 \in \C$.  But it is also known that the Julia set determines $c$ in this family $f_c(z) = z^2 + c$ \cite[Supplement to Theorem 1]{Baker:Eremenko}, providing the equivalence stated in \eqref{known equivalence}; see also \cite[Theorem 1]{Beardon:symmetries}.

%%%%%%
%%%%%%
\bigskip
\section{archimedean estimates} \label{archestimates}

In this section, we will carry out some archimedean estimates needed for the proofs of our main theorems. We work with $c\in \C $ and the Euclidean norm $|\cdot|$.  We let $\lambda_c(z)$ denote the escape-rate function of $f_c(z) =z^2+c$, defined by
	$$\lambda_c(z) = \lim_{n\to\infty} \frac{1}{2^n} \log ^+|f_c^n(z)|,$$
where $\log^+ = \max\{\log, 0\}$, and let $\mu_{c}$ denote the corresponding equilibrium measure supported on the Julia set $J_c$.  Where possible, we provide explicit constants in our estimates, even if they are not optimal.

\subsection{Distortion}
We first recall some basic distortion statements for conformal maps.

\begin{theorem} [Koebe 1/4 Theorem]  \label{Koebe}
Suppose $f: \D \to \C$ is univalent with $f(0) = 0$ and $f'(0) = 1$.  Then $f(\D) \supset D(0, 1/4)$.
\end{theorem}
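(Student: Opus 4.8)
The plan is to follow the classical route through Gronwall's area theorem and Bieberbach's coefficient bound. Write the Taylor expansion $f(z) = z + a_2 z^2 + a_3 z^3 + \cdots$ on $\D$, which is legitimate since $f(0) = 0$ and $f'(0) = 1$. The statement reduces to the assertion that every $w_0 \in \C$ \emph{not} in $f(\D)$ satisfies $|w_0| \geq 1/4$, so it suffices to prove that.

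First I would establish the area theorem: for any univalent $h(\zeta) = \zeta + b_0 + \sum_{n \geq 1} b_n \zeta^{-n}$ on the exterior disk $\{|\zeta| > 1\}$, one has $\sum_{n \geq 1} n|b_n|^2 \leq 1$. This is a Green's-theorem computation: the complement $E$ of $h(\{|\zeta| > 1\})$ is compact, and applying Green's theorem to the image curve $h(\{|\zeta| = r\})$ for $r > 1$ computes the area of the corresponding omitted region as $\pi\bigl(r^2 - \sum_{n \geq 1} n |b_n|^2 r^{-2n}\bigr)$. Letting $r \to 1^+$ and using that an area is non-negative gives the inequality.

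Next I would deduce Bieberbach's bound $|a_2| \leq 2$. Since $f$ is univalent with a simple zero only at the origin, $f(z)/z$ is non-vanishing on $\D$, so one may fix a holomorphic branch and set $\phi(z) = z\sqrt{f(z^2)/z^2}$; this is an odd univalent function on $\D$ with expansion $\phi(z) = z + \tfrac{1}{2} a_2 z^3 + \cdots$. (Univalence: $\phi(z_1) = \phi(z_2)$ forces $f(z_1^2) = f(z_2^2)$, hence $z_1^2 = z_2^2$, and then the odd symmetry of $\phi$ forces $z_1 = z_2$.) Then $h(\zeta) := 1/\phi(1/\zeta)$ is well-defined and univalent on $\{|\zeta| > 1\}$, with $h(\zeta) = \zeta - \tfrac{1}{2} a_2 \zeta^{-1} + \cdots$, so the area theorem applied with $b_1 = -\tfrac{1}{2} a_2$ gives $\tfrac{1}{2}|a_2| \leq 1$.

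Finally, the omitted-value trick: given $w_0 \notin f(\D)$, the map $g(z) = w_0 f(z)/(w_0 - f(z))$ is the composition of $f$ with the Möbius transformation $w \mapsto w_0 w/(w_0 - w)$, hence univalent on $\D$, with $g(0) = 0$, $g'(0) = 1$, and second Taylor coefficient $a_2 + 1/w_0$. Applying the Bieberbach bound to both $f$ and $g$ yields $|a_2| \leq 2$ and $|a_2 + 1/w_0| \leq 2$, whence $|1/w_0| \leq |a_2 + 1/w_0| + |a_2| \leq 4$, i.e. $|w_0| \geq 1/4$. As $w_0$ was an arbitrary omitted value, $f(\D) \supseteq D(0,1/4)$. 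The only step requiring genuine care is the square-root construction in the Bieberbach bound — verifying that the branch exists and that $\phi$ is univalent — while the area theorem and the closing inequality are essentially formal.
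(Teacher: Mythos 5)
Your argument is the standard and correct proof: Gronwall's area theorem, Bieberbach's bound $|a_2|\leq 2$ via the odd square-root transform (your univalence check for $\phi$ is exactly the needed point), and the omitted-value composition giving $|a_2+1/w_0|\leq 2$, hence $|w_0|\geq 1/4$. The paper itself offers no proof of this statement — it quotes the Koebe 1/4 Theorem as a classical black box alongside the Branner--Hubbard distortion result — so there is nothing to compare against beyond noting that your write-up reproduces the canonical textbook argument faithfully.
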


\begin{theorem} \cite[Corollary 3.3]{Branner:Hubbard:1} \label{Koebe at infinity} Let $U_R = \Chat\setminus \overline{D}(0,R)$ and suppose $f: U_R \to \Chat$ is univalent and satisfies 
	$$f(z) = z + \sum_{n\geq 1} \frac{a_n}{z^n}$$
near $\infty$.  Then we have
	$$f(U_R) \supset U_{2R}.$$
\end{theorem}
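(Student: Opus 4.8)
The plan is to argue by contradiction, converting an omitted value of $f$ in $U_{2R}$ into a univalent function on the unit disk whose second Taylor coefficient violates the classical Bieberbach bound. Since $f(z) = z + O(1/z)$ near $\infty$, we have $f(\infty) = \infty$, so $\infty \in f(U_R)$; it therefore suffices to show that every finite $w_0$ with $|w_0| > 2R$ lies in $f(U_R)$. Suppose instead that $|w_0| > 2R$ and $w_0 \notin f(U_R)$. Since $f$ is injective with $f(\infty) = \infty$, it is holomorphic and finite-valued on $U_R \setminus \{\infty\} = \{|z| > R\}$.

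I would then consider
$$\Psi(w) \;=\; \frac{R}{f(R/w) - w_0}, \qquad w \in \D .$$
For $w \neq 0$ the point $R/w$ lies in $\{|z| > R\}$, where $f$ is holomorphic and omits the value $w_0$, so $\Psi$ is holomorphic and nonzero there; and since $f(R/w) \to \infty$ as $w \to 0$, the Laurent expansion $f(R/w) = R/w + a_1 w/R + a_2 w^2/R^2 + \cdots$ gives
$$\Psi(w) \;=\; w + \frac{w_0}{R}\, w^2 + \frac{w_0^2 - a_1}{R^2}\, w^3 + \cdots ,$$
so $\Psi$ extends holomorphically across $0$ with $\Psi(0) = 0$ and $\Psi'(0) = 1$. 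Moreover $\Psi$ is univalent on $\D$, being a composition of the inversion $w \mapsto R/w$, the univalent map $f$, and the M\"obius map $\zeta \mapsto R/(\zeta - w_0)$, each of which is injective on the relevant domain. Hence $\Psi$ lies in the class $S$ of normalized univalent functions on $\D$, and the Bieberbach coefficient bound $|a_2(\Psi)| \leq 2$ --- a classical consequence of the area theorem --- forces $|w_0|/R \leq 2$, contradicting $|w_0| > 2R$. Therefore $f(U_R) \supseteq U_{2R}$.

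I do not expect a genuine obstacle: the argument is entirely classical, and indeed this is \cite[Corollary 3.3]{Branner:Hubbard:1}. Two points are worth noting. First, the sharp constant $2$ really uses the Bieberbach bound $|a_2| \leq 2$ rather than merely the Koebe $1/4$ theorem (Theorem \ref{Koebe}): the same argument run with Koebe $1/4$ in place of Bieberbach would only yield the weaker inclusion $f(U_R) \supseteq U_{4R}$. Second, the hypothesis that the expansion of $f$ near $\infty$ has no constant term is what centers the omitted set at the origin; for a general expansion $f(z) = z + a_0 + a_1/z + \cdots$ the same computation gives instead $f(U_R) \supseteq \{w : |w - a_0| > 2R\}$, and the Joukowski map $z \mapsto z + R^2/z$ on $U_R$, which omits precisely the segment $[-2R, 2R]$, shows the constant $2$ is optimal.
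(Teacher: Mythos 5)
Your argument is correct and complete: inverting at infinity via $\Psi(w)=R/(f(R/w)-w_0)$, checking that $\Psi$ is a normalized univalent map of $\D$, and applying the Bieberbach bound $|a_2|\leq 2$ (equivalently, the area theorem for the class $\Sigma$) is exactly the classical proof of this statement, which the paper itself does not prove but simply quotes from Branner--Hubbard. Your side remarks are also accurate: Koebe $1/4$ alone would only give $f(U_R)\supset U_{4R}$, and the Joukowski map $z\mapsto z+R^2/z$ shows the constant $2$ is sharp.
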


Applying these theorems to the B\"ottcher coordinate $\phi_c$ near $\infty$ for $f_c(z) = z^2+c$  and to the uniformizing map $\Phi$ for the complement of the Mandelbrot set $\cM$ (see \cite[\S 9]{Milnor:dynamics} or \cite[Chapter {VIII}.3-4]{CarlesonGamelin:book} for more information), we get some simple inequalities.

\begin{prop}  \label{parameter distortion}
For all $c$ with $|c| > 2$ we have 
$$\log|c| - \log 2 \leq \lambda_c(c) \leq \log |c| + \log 2.$$
\end{prop}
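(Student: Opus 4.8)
The plan is to work directly with the escape-rate function, bypassing the distortion theorems (though the same estimate can be extracted from them via the B\"ottcher coordinate $\phi_c$ near $\infty$). The starting point is the telescoping identity: whenever the forward orbit of $z$ under $f_c$ escapes to $\infty$ and never hits $0$, one has
$$\lambda_c(z) \;=\; \log|z| \;+\; \sum_{n=0}^{\infty} \frac{1}{2^{n+1}} \log\left| 1 + \frac{c}{(f_c^n(z))^2} \right|,$$
obtained by summing $\tfrac{1}{2^{n+1}}\log|f_c^{n+1}(z)| - \tfrac{1}{2^{n}}\log|f_c^{n}(z)| = \tfrac{1}{2^{n+1}}\log\bigl|1+c/(f_c^n(z))^2\bigr|$ over $n$ and letting the partial sums converge (this is just $\log|\phi_c(z)|$ written out as the usual B\"ottcher product). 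I would apply this with $z = c$.

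The one point that needs care is the following elementary claim: for $|c| > 2$ one has $|f_c^n(c)| \geq |c|$ for every $n \geq 0$. I would prove it by induction — the base case is trivial, and if $|f_c^n(c)| \geq |c|$ then
$$|f_c^{n+1}(c)| \;=\; \bigl|(f_c^n(c))^2 + c\bigr| \;\geq\; |f_c^n(c)|^2 - |c| \;\geq\; |c|^2 - |c| \;=\; |c|(|c|-1) \;>\; |c|,$$
since $|c| - 1 > 1$. The same chain of inequalities yields $|f_c^n(c)| \geq |c|(|c|-1)^n \to \infty$, so the orbit of $c$ escapes and stays away from $0$; this both licenses the identity above at $z = c$ — note also that $|f_c^n(c)| > 1$, so $\lambda_c(c) = \lim_n 2^{-n}\log|f_c^n(c)|$ — and gives absolute convergence of the series.

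With the claim established, the conclusion is immediate: for every $n$ we have $|c|/|f_c^n(c)|^2 \leq 1/|c| < 1/2$, so each summand satisfies
$$-\log 2 \;\leq\; \log\!\bigl(1 - \tfrac{1}{|c|}\bigr) \;\leq\; \log\left|1 + \frac{c}{(f_c^n(c))^2}\right| \;\leq\; \log\!\bigl(1 + \tfrac{1}{|c|}\bigr) \;\leq\; \log\tfrac{3}{2},$$
and since $\sum_{n\geq 0} 2^{-(n+1)} = 1$ the whole series lies in $[-\log 2, \log\frac32] \subseteq [-\log 2, \log 2]$. Hence $\log|c| - \log 2 \leq \lambda_c(c) \leq \log|c| + \log\frac32 < \log|c| + \log 2$, which is the desired inequality (in fact with a little room to spare in the upper bound). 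There is no serious obstacle here; the only substantive step is the induction in the claim, which simultaneously controls the size of each factor, the convergence of the product, and — in the B\"ottcher-coordinate phrasing — the fact that $c$ lies in the domain of $\phi_c$.
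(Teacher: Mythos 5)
Your proof is correct, but it takes a genuinely different route from the paper. The paper deduces both inequalities in one stroke from Theorem \ref{Koebe at infinity}: it uses the fact that $\Phi(c)=\phi_c(c)$ is the conformal uniformization of $\Chat\setminus\cM$ onto $\Chat\setminus\overline{\D}$ normalized at $\infty$ (a nontrivial input, the Douady--Hubbard theorem), so that $\lambda_c(c)=\log|\Phi(c)|$, and then applies the covering property $f(U_R)\supset U_{2R}$ to $\Phi$ and to $\Phi^{-1}$. You instead work entirely in the dynamical plane: you expand $\lambda_c(c)$ as the telescoping series $\log|c|+\sum_{n\ge 0}2^{-(n+1)}\log\bigl|1+c/(f_c^n(c))^2\bigr|$, justify it by the elementary induction $|f_c^n(c)|\ge|c|$ (indeed $|f_c^{n+1}(c)|\ge|f_c^n(c)|(|f_c^n(c)|-1)\ge(|c|-1)|f_c^n(c)|$, so the orbit escapes, avoids $0$, and $\log^+=\log$ along it), and then bound each summand by $\pm\log(1\mp 1/|c|)$, using $1/|c|<1/2$ and $\sum 2^{-(n+1)}=1$. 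This is fully self-contained: it avoids both the distortion theorem and the conformality of $\Phi$, and it even yields the slightly sharper upper bound $\lambda_c(c)\le\log|c|+\log\tfrac32$. What the paper's argument buys in exchange is brevity and reuse: the same distortion machinery is set up once and then applied again immediately (Proposition \ref{dynamical distortion}, Lemma \ref{shrinking rate}), whereas your series argument is special to the point $z=c$ (though it adapts easily). No gaps; the only step needing care, the induction controlling $|f_c^n(c)|$, is handled correctly.
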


\begin{proof}
Let $\Phi(c) = \phi_c(c)$ be the uniformizing map from $\Chat\setminus \mathcal{M}$ to $\Chat \setminus \overline{\D}$ so that $\lambda_c(c) = \log|\Phi(c)|$.  For the lower bound on $\lambda_{c}(c)$, applying Theorem \ref{Koebe at infinity} to $\Phi^{-1}$ and sets $U_R$ with $R\geq 1$ gives
	$$|c| \leq 2 \, e^{\lambda_{c}(c)}$$
for all $c \not\in \cM$, so that
  	$$\lambda_{c}(c) \geq \log|c| - \log 2.$$
For the upper bound on $\lambda_{c}(c)$, apply Theorem \ref{Koebe at infinity} to $\Phi$ and sets $U_{R}$, $R > 2$.  Then $\Phi(U_R) \supset U_{2R}$ implies that 
	$$e^{\lambda_c(c)} \leq 2 |c|,$$
for $|c| = R > 2$.  
\end{proof}

We can do similar things in the dynamical plane.  

\begin{prop}  \label{dynamical distortion}
For each $c$ with $|c| > 2$ and every $z$ with $|z| > 2 e^{\lambda_{c}(0)}$ (so in particular for all $|z| > 2^{3/2}|c|^{1/2}$), 
we have
	$$\log|z| - \log 2 \leq \lambda_{c}(z) \leq \log|z| + \log 2.$$ 
\end{prop}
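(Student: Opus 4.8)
The plan is to run the argument of Proposition~\ref{parameter distortion} in the dynamical plane, with the B\"ottcher coordinate $\phi_c$ of $f_c$ playing the role of the uniformizer of the complement of the Mandelbrot set. Recall that for $|c|>2$ the critical point $0$ escapes, so $\lambda_c(0)>0$, and $\phi_c$ extends to a conformal isomorphism $\phi_c\colon \Omega_c\to U_R$, where $\Omega_c:=\{z:\lambda_c(z)>\lambda_c(0)\}$ and $R:=e^{\lambda_c(0)}$, conjugating $f_c$ to $z\mapsto z^2$, with $\phi_c(z)=z+\sum_{n\geq 1}a_n/z^n$ near $\infty$ and $\lambda_c(z)=\log|\phi_c(z)|$ on $\Omega_c$ (see \cite{Milnor:dynamics}). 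Write $\psi:=\phi_c^{-1}\colon U_R\to\Omega_c$, which has the same form $\psi(w)=w+\sum_{n\geq 1}b_n/w^n$ near $\infty$. The parenthetical remark is immediate: since $f_c(0)=c$ gives $\lambda_c(c)=2\lambda_c(0)$, the upper bound of Proposition~\ref{parameter distortion} yields $R=e^{\lambda_c(0)}\leq (2|c|)^{1/2}$, hence $2R\leq 2^{3/2}|c|^{1/2}$, so it suffices to treat $|z|>2R$.

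The first step is to see that such a $z$ lies in $\Omega_c$: applying Theorem~\ref{Koebe at infinity} to $\psi$ gives $\Omega_c=\psi(U_R)\supseteq U_{2R}$, so $|z|>2R$ forces $z\in\Omega_c$ and $\lambda_c(z)=\log|\phi_c(z)|$; moreover $|\phi_c(z)|=e^{\lambda_c(z)}>R$. For the upper bound, apply Theorem~\ref{Koebe at infinity} to the restriction $\phi_c|_{U_{|z|}}$ (legitimate because $U_{|z|}\subseteq U_{2R}\subseteq\Omega_c$), which gives $\phi_c(U_{|z|})\supseteq U_{2|z|}$; since $z$ lies on the boundary of $U_{|z|}$ and $\phi_c$ is injective on $\Omega_c$, we get $\phi_c(z)\notin \phi_c(U_{|z|})$, hence $\phi_c(z)\notin U_{2|z|}$, i.e.\ $|\phi_c(z)|\leq 2|z|$ and $\lambda_c(z)\leq \log|z|+\log 2$. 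For the lower bound, set $\rho:=|\phi_c(z)|>R$ and apply Theorem~\ref{Koebe at infinity} to $\psi|_{U_\rho}$, giving $\psi(U_\rho)\supseteq U_{2\rho}$; since $\phi_c(z)$ lies on the boundary of $U_\rho$ and $\psi$ is injective on $U_R$, we get $z=\psi(\phi_c(z))\notin\psi(U_\rho)$, hence $z\notin U_{2\rho}$, i.e.\ $|z|\leq 2|\phi_c(z)|$ and $\lambda_c(z)\geq \log|z|-\log 2$.

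I do not expect a real obstacle here: the proof is essentially formal once the standard structure of $\phi_c$ is recorded, and the only point requiring care is that each application of Theorem~\ref{Koebe at infinity} be made on a set $U_r$ genuinely contained in the domain of univalence of the relevant map (namely $\Omega_c$ for $\phi_c$ and $U_R$ for $\psi$), which is guaranteed by the inclusions $U_{|z|}\subseteq U_{2R}\subseteq\Omega_c$ and $U_\rho\subseteq U_R$ established in the first step.
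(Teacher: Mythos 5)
Your proposal is correct and follows essentially the same route as the paper: both bounds come from applying Theorem \ref{Koebe at infinity} to $\phi_c$ and to $\phi_c^{-1}$ on the appropriate regions $U_r$, exactly as in the paper's proof (which parametrizes the same regions as $U_{sR}$ and $U_{sR'}$ with $s\geq 1$). You merely make explicit the injectivity/exclusion step and the verification that $|z|>2e^{\lambda_c(0)}$ places $z$ in the domain of univalence, details the paper leaves implicit.
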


\begin{proof}
Let $R_0 = e^{\lambda_{c}(0)}$.  Apply Theorem \ref{Koebe at infinity} to $\phi_c^{-1}$ and sets $U_{R}$ for $R\geq R_0$. Then for $z \in \phi_c^{-1}(U_{R_0})$ and $R = e^{\lambda_c(z)}$, we find that 
	$$|z| \leq 2 \, e^{\lambda_{c}(z)}. $$
In particular, the estimate holds for all $|z| > 2 e^{\lambda_{c}(0)}$ because $\phi_c^{-1}(U_{R_0}) \supset U_{2R_0}$.  This gives the lower bound of the proposition.

For the upper bound, set $R' = 2e^{\lambda_{c}(0)} = 2R_0$, so that $\phi_c$ is univalent on $U_{R'}$.  Applying Theorem \ref{Koebe at infinity} to $\phi_c$ on sets $U_R$ for $R \geq R'$, we have 
	$$e^{\lambda_c(z)}\leq 2|z|$$
for each $|z| = R > R'$.  Therefore, 
	$$\lambda_{c}(z) \leq \log|z| + \log 2$$
for all $|z| > 2e^{\lambda_c(0)}$.  

Finally, recall that $\lambda_c(0) \leq \frac12 \log |c| + \frac12 \log 2$ for all $|c| > 2$, from Proposition \ref{parameter distortion}.  Thus, 
	$$2^{3/2}|c|^{1/2} = 2 e^{\frac12 \log|c| + \frac12 \log 2} \geq 2 e^{\lambda_c(0)}$$ 
for all $|c| > 2$.  
\end{proof}

\subsection{Controlling escape rates from below}
We will need both upper and lower bounds on the escape rate $\lambda_{c}$ near the Julia set $J_c$ of $f_c(z) = z^2 + c$.  We begin with an elementary observation.

\begin{lemma}  \label{size 1}
Fix any $c$ with $|c| \geq 25$.  Let $\pm b$ be the two zeroes of $f_c$.  Then 	
	$$\mu_c(D(b, 1)) = \mu_c(D(-b, 1)) = 1/2$$
and 
	$$\lambda_{c}(z) \geq \frac{1}{4} \log |c|$$
for all $z\not\in D(b, 1) \cup D(-b, 1)$.  
\end{lemma}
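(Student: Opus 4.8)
The plan is to establish the two assertions separately, each using the description of $f_c$ as a quadratic polynomial with well-separated roots when $|c|$ is large. First I would locate the zeros: $\pm b$ with $b^2 = -c$, so $|b| = |c|^{1/2} \geq 5$. The measure statement $\mu_c(D(b,1)) = \mu_c(D(-b,1)) = 1/2$ should follow from the balanced/functional equation for the equilibrium measure, namely that $\mu_c$ is the unique probability measure with $f_c^* \mu_c = 2\,\mu_c$, combined with the fact that $f_c$ maps each of the two disks $D(\pm b, 1)$ univalently onto a common region. Concretely: for $z \in D(b,1)$ we have $|z| \in [|b|-1, |b|+1]$, so $|z|$ is comparable to $|c|^{1/2}$ and $|f_c'(z)| = 2|z|$ is bounded away from $0$; I would check that $f_c$ restricted to $D(b,1)$ is injective (the derivative does not vanish and the disk is small enough to avoid the critical point $0$, which is at distance $|b| > 4$ from the disk) and that $f_c(D(b,1)) = f_c(D(-b,1))$ since $f_c(-z) = f_c(z)$. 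Because $f_c$ is two-to-one globally and these are the two preimage disks of their common image $V = f_c(D(b,1))$, invariance of $\mu_c$ gives $\mu_c(D(b,1)) = \mu_c(D(-b,1)) = \tfrac12 \mu_c(V)$; to finish one needs $\mu_c(V) = 1$, i.e. that the Julia set $J_c$ is entirely contained in $D(b,1) \cup D(-b,1)$. This last containment is the key geometric input and follows from the standard escape criterion: if $|z| > $ (something like) $|c|/2$ or if $z$ is far from both $\pm b$, then orbits escape, so $J_c$ — being bounded and forward invariant — must sit near the roots; I would make the "$J_c \subset D(b,1)\cup D(-b,1)$" claim precise using that $f_c^{-1}(\text{large disk})$ is contained in a thin annular neighborhood of $\{|z|^2 = |c|\}$ which, combined with a second application of $f_c^{-1}$, collapses onto the two unit disks.

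For the escape-rate bound, fix $z \notin D(b,1) \cup D(-b,1)$. If $z$ is already large, say $|z| \geq 2e^{\lambda_c(0)}$, Proposition \ref{dynamical distortion} gives $\lambda_c(z) \geq \log|z| - \log 2$, and since such $z$ has $|z| \gtrsim |c|^{1/2}$ this easily beats $\tfrac14\log|c|$ once $|c| \geq 25$. The substantive case is $z$ of moderate size lying outside the two unit disks: here I would show $|f_c(z)|$ is large — indeed $|f_c(z)| = |z^2 + c| = |z-b|\,|z+b|$, and since $z$ avoids both disks, both factors are at least $1$, but more importantly at least one of $|z \pm b|$ is comparable to $|b| = |c|^{1/2}$ unless $z$ is trapped in the thin "lemniscate" region $\{|z-b||z+b| \text{ small}\}$. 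A cleaner route: iterate once or twice and apply the large-$z$ estimate to $f_c^n(z)$ together with $\lambda_c(z) = 2^{-n}\lambda_c(f_c^n(z))$. So the argument is: (i) if $z \notin D(b,1)\cup D(-b,1)$ then $|f_c(z)| \geq$ some explicit function of $|c|$ large enough that $f_c(z)$ lies in the domain where Proposition \ref{dynamical distortion} applies, or else $z$ has already escaped to the region $|z| > 2^{3/2}|c|^{1/2}$; (ii) then $\lambda_c(z) = \tfrac12 \lambda_c(f_c(z)) \geq \tfrac12(\log|f_c(z)| - \log 2) \geq \tfrac14\log|c|$, where the final inequality is a book-keeping check using $|c|\geq 25$ and the lower bound on $|f_c(z)|$ (which should be roughly $|c|^{1/2}$, giving $\tfrac12 \cdot \tfrac12\log|c| - O(1)$, and the constant $25$ is chosen exactly so the $O(1)$ is absorbed).

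The main obstacle I anticipate is making the containment $J_c \subset D(b,1) \cup D(-b,1)$ both correct and effective with the explicit threshold $|c| \geq 25$ — equivalently, showing that every $z$ outside those two disks genuinely escapes and with the claimed rate. This requires a careful two-step estimate on $|f_c(z)|$ and $|f_c^2(z)|$: a single application of $f_c$ on a point just outside $D(b,1)$ lands it at distance $\geq 1$ from the root-image region but not obviously in the escaping region, so one likely needs to track that $|f_c(z)|$ is at least something like $2|z|$ or at least grows, using that $|z-b||z+b| \geq (\text{dist to nearest root})\cdot(|c|^{1/2} - 1) \geq |c|^{1/2}-1$ when $z$ is outside both disks but not between them, and a separate argument for $z$ in the narrow corridor near the segment joining $b$ and $-b$. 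The choice $|c| \geq 25$ (so $|b| \geq 5$, unit disks of combined "radius influence" $2 \ll 5$) is what gives the needed slack. All remaining steps are routine: injectivity of $f_c$ on each small disk from nonvanishing of $f_c' = 2z$ there, the symmetry $f_c(\pm z)$ equal, and the scaling identity $\lambda_c \circ f_c = 2\lambda_c$.
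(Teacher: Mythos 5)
Your overall strategy is the same as the paper's: trap $J_c$ in $D(b,1)\cup D(-b,1)$ using the expansion of $f_c$ on those disks, deduce the measure statement from the degree-one covering of the union (equivalently, the balanced property together with the $z\mapsto -z$ symmetry), and get the escape-rate bound by pushing forward with $\lambda_c\circ f_c^{\,n}=2^{n}\lambda_c$ and the distortion estimate of Proposition \ref{dynamical distortion}. Two small remarks on the geometry: the ``narrow corridor'' between $b$ and $-b$ that you worry about is not actually an issue, since for any $z$ outside both disks one has $|z-b|+|z+b|\ge 2|c|^{1/2}$ and both factors are $\ge 1$, whence $|f_c(z)|=|z-b|\,|z+b|\ge 2|c|^{1/2}-1$ uniformly; and the paper sidesteps even this by estimating $\lambda_c$ only on the two boundary circles and then using harmonicity of $\lambda_c$ off the disks (the Julia set lies inside them) together with the minimum principle, since $\lambda_c\to\infty$ at $\infty$.

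However, the explicit chain in your step (ii) has a genuine quantitative gap. A single application of $f_c$ only yields $|f_c(z)|\ge 2|c|^{1/2}-1$. First, this need not place $f_c(z)$ in the region where the lower bound of Proposition \ref{dynamical distortion} is guaranteed: the stated sufficient radius is $2^{3/2}|c|^{1/2}$ (coming from $|w|>2e^{\lambda_c(0)}$), and $2|c|^{1/2}-1<2^{3/2}|c|^{1/2}$. Second, even granting that estimate, $\tfrac12\bigl(\log(2|c|^{1/2}-1)-\log 2\bigr)=\tfrac12\log\bigl(|c|^{1/2}-\tfrac12\bigr)<\tfrac14\log|c|$ for every $c$, so the $O(1)$ deficit has the wrong sign and cannot be ``absorbed'' by choosing $|c|\ge 25$; no threshold on $|c|$ rescues the one-iterate bound. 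This is exactly why the paper passes to the second iterate: on $\partial D(\pm b,1)$ one has $|f_c^2(z)|\ge (2|c|^{1/2}-1)^2-|c|\ge 2|c|$ for $|c|\ge 25$, and then $\lambda_c(z)=\tfrac14\lambda_c(f_c^2(z))\ge\tfrac14(\log 2|c|-\log 2)=\tfrac14\log|c|$, with $2|c|$ now comfortably inside the range where Proposition \ref{dynamical distortion} applies. Since you do say ``iterate once or twice,'' the fix is within the spirit of your outline, but as written the displayed inequality in (ii) does not close and the two-iterate computation is needed.
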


\begin{proof}
First observe that $b =  i\sqrt{c}$, so that $|b| = |c|^{1/2}$.  Suppose $b + t$ lies on the boundary of $D(b, 1)$, so that $|t| = 1$.  Then 	
	$$f_c(b + t) = 2bt + t^2 = t(2b + t)$$
has absolute value $\geq 2|c|^{1/2} - 1 > |c|^{1/2} + 1$ for $|c| \geq 25$.  In particular, $f_c$ sends $D(b, 1)$ with degree 1 over the union $D(b, 1) \cup D(-b, 1)$. Similarly for $D(-b,1)$, proving the first claim about the measure of each disk.  As the Julia set of $f_c$ is contained in these two disks, we know that $\lambda_c$ is harmonic on the complement of their union.  Under one further iterate, we have
	$$|f_c^2(b+t)| \geq 4|c| - 4|c|^{1/2} + 1 - |c| = 3|c| - 4|c|^{1/2} + 1 \geq 2|c|$$
because $|c| \geq 25$.  From Proposition \ref{dynamical distortion}, we conclude that
	$$\lambda_{c}(b+t) = \frac{\lambda_{c}(f_c^2(b+t))}{4} \geq \frac{1}{4} (\log(2|c|) - \log 2)= \frac{1}{4} \log|c|$$
and similarly for $\lambda_c(-b+t)$ with $|t|=1$.  As $\lambda_c$ is harmonic on $\C\setminus (D(b, 1) \cup D(-b, 1))$, this proves the lemma.
\end{proof}

We now extend the statement of Lemma \ref{size 1} to two further preimages of 0 under $f_c$.  

\begin{lemma} \label{escape control} 
For $n = 1, 2, 3$, and for each $c\in \C$, we let $D_n(c)$ be the union of the $2^n$ disks of radius $\eps_n = |2c|^{-(n-1)/2}$ centered at the solutions $z$ to $f^n_c(z) = 0$.  For each  $|c| \geq 25$, the $2^n$ disks are disjoint, each has $\mu_c$-measure $1/2^n$, and  
$$\lambda_{c}(z) \geq \frac{1}{2^{n+1}} \log |c|$$
for all $z \not\in D_n(c)$ and $n = 1,2, 3$.
\end{lemma}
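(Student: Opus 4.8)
The plan is to induct on $n$, using Lemma \ref{size 1} as the base case $n=1$ and propagating the estimate through one level of preimages at a time via the functional equation $\lambda_{c}(z) = \tfrac12 \lambda_{c}(f_c(z))$. The geometric point is that the disks in $D_n(c)$ are the components of $f_c^{-1}(D_{n-1}(c))$ once one checks that $f_c$ maps each small disk in $D_n(c)$ univalently onto (a set containing) the relevant disk of $D_{n-1}(c)$, and nearly isometrically up to a factor that I will control. So the two things to verify at each step are: (i) the $2^n$ disks of radius $\eps_n = |2c|^{-(n-1)/2}$ are pairwise disjoint; and (ii) for $z \notin D_n(c)$, the image $f_c(z)$ lies outside $D_{n-1}(c)$ — or more precisely, $\lambda_{c}(f_c(z))$ is at least $\tfrac{1}{2^{n}}\log|c|$ — so that the recursion $\lambda_c(z) = \tfrac12\lambda_c(f_c(z)) \geq \tfrac{1}{2^{n+1}}\log|c|$ closes.

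First I would record the local geometry near a solution $w$ of $f_c^{n-1}(w) = 0$. If $z_0$ is a solution of $f_c^n(z)=0$ lying over $w$, then $f_c(z_0) = w$ and $z_0 = \pm\sqrt{w-c}$, so $|z_0| = |w-c|^{1/2}$; for $n\geq 2$ the point $w$ is itself small compared to $|c|$ (indeed $|w|$ is bounded by a small multiple of $|c|^{1/2}$ by the dynamical distortion Proposition \ref{dynamical distortion} together with the bound on $\lambda_c$ from the previous step), so $|z_0| \approx |c|^{1/2}$ up to bounded factors. On the disk $D(z_0, \eps_n)$ we have $f_c(z) = f_c(z_0) + f_c'(z_0)(z - z_0) + (z-z_0)^2 = w + 2z_0(z-z_0) + (z-z_0)^2$, and since $|2z_0| \approx 2|c|^{1/2}$ while $|z - z_0| \leq \eps_n = |2c|^{-(n-1)/2}$, the linear term dominates: $f_c(D(z_0,\eps_n))$ contains a disk about $w$ of radius comparable to $|2c|^{1/2}\cdot\eps_n/2 = \tfrac12|2c|^{-(n-2)/2} = \eps_{n-1}/2$, and is contained in a disk about $w$ of radius roughly $|2c|^{1/2}\eps_n \approx \eps_{n-1}$. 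Tracking constants carefully (the $|c|\geq 25$ hypothesis gives plenty of room: $|c|^{1/2}\geq 5$ and the quadratic error term is negligible against the linear one), one shows $f_c\big(D(z_0,\eps_n)\big) \supset D(w, \eps_{n-1})$ and $f_c^{-1}\big(D(w,\eps_{n-1})\big) \cap \{|z|\approx|c|^{1/2}\} \subset D_n(c)$. Disjointness of the $2^n$ disks in $D_n(c)$ then follows because the two preimages $\pm\sqrt{w-c}$ of a given $w$ are separated by $2|w-c|^{1/2}\approx 2|c|^{1/2}$, vastly larger than $2\eps_n$, and preimages of distinct $w, w'$ in $D_{n-1}(c)$ lie in disjoint regions since $f_c$ is injective on each half-plane and $D(w,\eps_{n-1}), D(w',\eps_{n-1})$ are disjoint by the inductive hypothesis. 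The measure statement $\mu_c(D(z_0,\eps_n)) = 1/2^n$ is immediate from $f_c^*\mu_c = 2\mu_c$ together with $\mu_c(D(w,\eps_{n-1})) = 1/2^{n-1}$ and the fact that $f_c$ restricted to $D(z_0,\eps_n)$ is a degree-one cover of its image, which contains the support of $\mu_c|_{D(w,\eps_{n-1})}$ (the Julia set sits well inside these disks).

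Having set up the covering, the escape-rate bound is a one-line induction: for $z\notin D_n(c)$, either $|z|$ is large — say $|z| > 2^{3/2}|c|^{1/2}$ — in which case Proposition \ref{dynamical distortion} gives $\lambda_c(z) \geq \log|z| - \log 2 \geq \tfrac12\log(2|c|) - \log 2 \geq \tfrac14\log|c| \geq \tfrac{1}{2^{n+1}}\log|c|$ directly; or $|z|\lesssim|c|^{1/2}$, in which case $z\notin D_n(c)$ forces $f_c(z) \notin D_{n-1}(c)$ by the covering property above, so by the inductive hypothesis $\lambda_c(f_c(z)) \geq \tfrac{1}{2^{n}}\log|c|$, and hence $\lambda_c(z) = \tfrac12 \lambda_c(f_c(z)) \geq \tfrac{1}{2^{n+1}}\log|c|$. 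Since $\lambda_c$ is harmonic off $D_n(c)$ (the Julia set lies inside these disks) and we have controlled it on $\partial D_n(c)$ and near infinity, the minimum principle extends the bound to all of $\C \setminus D_n(c)$.

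The main obstacle will be step (i)/(ii), i.e. verifying with honest constants that $f_c$ maps $D(z_0,\eps_n)$ onto a set sandwiched between $D(w,\eps_{n-1})$ and a slightly larger disk, so that the radii shrink by exactly the factor $|2c|^{-1/2}$ claimed and the disks stay disjoint — this requires bounding the quadratic remainder and, crucially, bounding $|w|$ for the intermediate preimages $w$ with $f_c^{n-1}(w)=0$ (which one gets from the escape-rate lower bound at level $n-1$ fed back through Proposition \ref{dynamical distortion}). Because $n$ only ranges over $1,2,3$, this is a finite check and one could in principle do each level by hand, but organizing it as a clean induction with the radius $\eps_n = |2c|^{-(n-1)/2}$ and the hypothesis $|c|\geq 25$ doing all the work is the cleanest route.
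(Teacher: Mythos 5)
Your overall strategy is the paper's: base case from Lemma \ref{size 1}, then propagation to $n=2,3$ by mapping the small disks around the $n$-th preimages of $0$ with the linear term of $f_c$ dominating, the functional equation $\lambda_c=\tfrac12\lambda_c\circ f_c$, and degree-one covering for the measure. Most of that sketch would go through. But there is one concrete gap, and it is exactly the point where the paper has to do extra work: your justification of the disjointness of the level-$n$ disks (and, implicitly, of the exact measure count) from ``$f_c$ is injective on each half-plane and the $\eps_{n-1}$-disks are disjoint by the inductive hypothesis'' is quantitatively insufficient. If $z,z'$ are level-$n$ roots over distinct level-$(n-1)$ roots $w\neq w'$, then $|z-z'|\,|z+z'|=|w-w'|$ with $|z+z'|\leq 2|c|^{1/2}+2$, so knowing only $|w-w'|>2\eps_{n-1}$ gives $|z-z'|>\eps_{n-1}/(|c|^{1/2}+1)$, which is \emph{smaller} than $2\eps_n=\sqrt2\,\eps_{n-1}/|c|^{1/2}$; you lose a factor of about $\sqrt2$ per level (the same factor you drop when you say the image of $D(z_0,\eps_n)$ lies in a disk of radius ``roughly $\eps_{n-1}$'' --- it is really up to about $\sqrt2\,\eps_{n-1}$, which is why the paper works with radius $2\eps_{n-1}$ and needs the level-$2$ disks of radius $2\eps_2$, not just $\eps_2$, to be disjoint for the $n=3$ step). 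Disjointness of the preimage \emph{components} of $D(w,\eps_{n-1})$ is automatic, but those components have radius about $\eps_n/\sqrt2$, strictly smaller than the $\eps_n$-disks the lemma asserts are disjoint, so injectivity plus the inductive hypothesis does not close this step.

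The paper fills this hole with the only genuinely nontrivial separation, namely between $\beta(c)$ and $\beta'(c)$ (the two nearby solutions of $f_c^2=0$), via the explicit expansions in \eqref{binomial estimate}, yielding $|\beta(c)-\beta'(c)|\geq 3/4$, which is far more than the needed $\approx 2\sqrt2\,\eps_2$; your sketch has no counterpart for this input. If you want to keep your inductive spirit, the cheapest repair is the same identity applied one level down: $|\beta-\beta'|\,|\beta+\beta'|=|b-(-b)|=2|c|^{1/2}$ and $|\beta\pm i\sqrt c\,|\leq 1$ give $|\beta-\beta'|\geq \frac{2|c|^{1/2}}{2|c|^{1/2}+2}\geq \frac56$ for $|c|\geq 25$, i.e.\ you must propagate the actual separations (of order $2|c|^{1/2}$ at level $1$, order $1$ at level $2$, order $|c|^{-1/2}$ at level $3$) rather than only the statement that the $\eps_{n-1}$-disks are disjoint. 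With that amendment your argument matches the paper's; the remaining differences (bounding $\lambda_c$ pointwise via the containment $f_c^{-1}(D_{n-1}(c))\subset D_n(c)$ rather than on boundary circles plus harmonicity) are cosmetic.
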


\begin{proof} 
Lemma \ref{size 1} provides the result for $n =1$ and for any $|c| \geq 25$.  Note that the two disks of radius $2 \eps_1 = 2$ around the solutions to $f(z) = 0$ are disjoint.  

For $n = 2,3$, suppose that $z=w$ is a solution to $f^n_c(z) = 0$.  Note that
	$$\lambda_c(w) = \frac{1}{2^{n+1}} \lambda_c(c) \leq \frac{1}{8}\lambda_c(c) \leq \frac{1}{8}(\log |c| + \log 2) < \frac{1}{4} \log|c|$$
by Proposition \ref{parameter distortion}.  Since $|c| \geq 25$, it follows that the point $w$ must lie in the set $D_1(c)$ by Lemma \ref{size 1}.  In particular, this implies that $|w| > |c|^{1/2} - 1$, so that 
\begin{eqnarray*} 
|f_c(w+t) - f_c(w)| &=&  |t(2w+t)| \\
	&\geq& \frac{1}{|2c|^{(n-1)/2}} \left( 2 |c|^{1/2} - 2 - |2c|^{-(n-1)/2} \right) \\
	&=& \frac{1}{|2c|^{(n-2)/2}} \left( \sqrt{2}\left(1 - \frac{1}{|c|^{1/2}} \right) - \frac{1}{|2c|^{n/2}}\right),
\end{eqnarray*}
for all $t$ with $|t|=\eps_n = |2c|^{-(n-1)/2}$ and each $w$ satisfying $f^n(w) = 0$ with $n = 2$ or $3$.  As $|c| \geq 25$, we have 
$$\sqrt{2}\left(1 - \frac{1}{|c|^{1/2}} \right) - \frac{1}{|2c|^{n/2}} \geq \frac{4\sqrt{2}}{5} - \frac{1}{50} > 1$$
for $n = 2, 3$, and we conclude that 
\begin{equation} \label{lower 23}
 	|f_c(w+t) - f_c(w)| > \frac{1}{|2c|^{(n-2)/2}} = \eps_{n-1}
\end{equation}
for $|t| = \eps_n$ and $f^n(w) = 0$.  By a similar argument, we also have 
\begin{equation} \label{upper 23}
	|f_c(w+t) - f_c(w)| \leq 2 \eps_{n-1}
\end{equation}
for $|t| = \eps_n$ and $f^n(w) = 0$, $n= 2,3$.  

The estimates \eqref{lower 23} and \eqref{upper 23} for $n=2$ imply that the four disks of radius $\eps_2$ are disjoint:  two solutions to $f^2(z)=0$ lie in each component of $D_1(c)$, and the disks around each of these are mapped into disjoint disks of radius 2 around $\pm i \sqrt{c}$, covering the two components of $D_1(c)$.  It follows that the $\mu_c$-measure of each component of $D_2(c)$ is exactly $\frac14$.   Moreover, Lemma \ref{size 1} implies that $\lambda_c(f_c(w+t)) \geq \frac14 \log|c|$ for $|t| = \eps_2$ and $f^2(w)=0$, so that $\lambda_c(w+t) \geq \frac18 \log |c|$.  As $\lambda_c$ is a harmonic function outside of the Julia set, we therefore have
	$$\lambda_c(z) \geq \frac18 \log|c|$$
for all $z \not\in D_2(c)$.

It remains only to show that the four disks centered at the solutions to $f^2(z)=0$ of radius $2\eps_2 = \sqrt{2}/|c|^{1/2}$ are disjoint, for this will imply that the 8 disks of radius $\eps_3$ (centered at the solutions to $f^3(z)=0$) must also be disjoint, from  \eqref{lower 23} and \eqref{upper 23} for $n=3$.  It also immediately follows that each of the 8 components of $D_3(c)$ has $\mu_c$-measure equal to $\frac18$, and moreover that 
	$$\lambda(w+t) \geq \frac{1}{16} \log |c|$$
for $f^3(w) = 0$ and $|t| = \eps_3$, so that 
	$$\lambda_c(z) \geq \frac{1}{16} \log|c|$$
for all $z \not\in D_3$.  

This disjointness is clear for $|c|$ sufficiently large.  Indeed, the points $w$ satisfying $f^2_c(w) = 0$ have the form $\pm \beta(c)$ and $\pm \beta'(c)$ where
\begin{equation} \label{binomial estimate}
     \left|\beta(c) -\left( i \sqrt{c} + \frac{1}{2} + \frac{i}{8\sqrt{c}} \right)\right|=\left|\sum_{j=3}^\infty C_{\frac{1}{2}}^j\cdot \frac{1}{(i\sqrt{c})^{j-1}}\right|\leq \frac{5}{4|c|},
\end{equation}
with binomial coefficients 
	$$C^j_{1/2} = \left( \begin{array}{cc} 1/2 \\ j \end{array} \right),$$
and similarly
   $$\left|\beta'(c) -\left( i \sqrt{c} - \frac{1}{2} + \frac{i}{8 \sqrt{c}}\right)\right|\leq \frac{5}{4|c|}.$$
In particular, the distance between the two closest such roots satisfies
\begin{equation}\label{pre-2-root-distance}
|\beta(c) - \beta'(c)| \geq 1-\frac{5}{2|c|}\geq \frac{3}{4}> 2\cdot \sqrt{2}/|c|^{1/2}=4\eps_2
\end{equation}
for $|c|\geq 25$.  This completes the proof.  
\end{proof}

\subsection{Controlling escape rates from above}
We now provide an upper bound, applying the Distortion Theorems stated above.

\begin{lemma}  \label{shrinking rate}
Fix any $c$ with $|c| \geq 25$.  For each $n\geq 1$ and for all $z\in \C$ with 
	$$\dist(z, J_c) < \frac{1}{5\cdot3^n  |c|^{(n-2)/2}}$$
we have 
	$$\lambda_{c}(z) \leq \frac{1}{2^n} (\log|c| + \log 2) < \frac{1}{2^{n-1}} \log |c|. $$
\end{lemma}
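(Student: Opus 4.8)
The plan is to bound $\lambda_c(z)$ near the Julia set by pulling back along the dynamics: since $f_c$ doubles the escape rate, $\lambda_c(f_c^n(z)) = 2^n \lambda_c(z)$, so it suffices to show that $f_c^n$ maps a small neighborhood of $J_c$ into a region where $\lambda_c$ is already controlled — concretely, into the disk $\overline D(0, 2|c|)$ or so, on which Proposition \ref{dynamical distortion} together with the maximum principle gives $\lambda_c \le \log|c| + O(1)$. The mechanism for the required shrinking of neighborhoods is the Koebe distortion theory: the inverse branches of $f_c^n$ along the Julia set are univalent on a definite (scale $\asymp 1$) neighborhood of $J_c$, so by the Koebe $1/4$ theorem (Theorem \ref{Koebe}) and standard distortion estimates, they contract by a factor comparable to $|(f_c^n)'|^{-1}$ on $J_c$, and on $J_c \subset D(b,1)\cup D(-b,1)$ we have $|f_c'| = |2z| \asymp |c|^{1/2}$, giving derivative $\asymp (2|c|^{1/2})^n \asymp 3^n |c|^{n/2}$ up to the explicit slack absorbed into the constants $5$ and $3^n$.

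More precisely, first I would fix the "safe region": by Lemma \ref{size 1} (or Lemma \ref{escape control} with $n=1$), $J_c \subset D(b,1) \cup D(-b,1)$ with $|b| = |c|^{1/2}$, so $J_c \subset \overline D(0, |c|^{1/2}+1) \subset \overline D(0, 2|c|^{1/2})$ for $|c|\ge 25$; on the circle $|w| = 2|c|^{1/2}$ Proposition \ref{dynamical distortion} applies (as $2|c|^{1/2} > 2e^{\lambda_c(0)}$ fails in general — here I would instead use the explicit threshold $|z| > 2^{3/2}|c|^{1/2}$ and work on $\overline D(0, 4|c|^{1/2})$ or on a slightly larger disk), yielding $\lambda_c \le \log|w| + \log 2 \le \log|c| + \log 2$ there, hence the same bound on the whole disk by subharmonicity and the maximum principle. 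Then, for $z$ with $\dist(z,J_c) < r_n := (5\cdot 3^n |c|^{(n-2)/2})^{-1}$, I would show $f_c^n(z)$ lands in this safe disk and conclude
$$\lambda_c(z) = \frac{1}{2^n}\lambda_c(f_c^n(z)) \le \frac{1}{2^n}(\log|c| + \log 2) < \frac{1}{2^{n-1}}\log|c|,$$
the last inequality because $\log 2 < \log|c|$ for $|c| \ge 25$ and $\tfrac{1}{2^n}(\log|c|+\log 2) < \tfrac{1}{2^n}\cdot 2\log|c| = \tfrac{1}{2^{n-1}}\log|c|$.

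To prove that $f_c^n$ maps the $r_n$-neighborhood of $J_c$ into the safe disk, I would argue by induction on $n$, tracking a nested sequence of neighborhoods $J_c \subset U_0 \supset f_c(\text{nbhd})\ldots$: the key estimate is that on the disks of Lemma \ref{escape control} (radius $\eps_k = |2c|^{-(k-1)/2}$ around the $2^k$ preimages of $0$), which shrink at exactly the rate $|2c|^{-1/2}$ per level, $f_c$ maps the radius-$\eps_k$ disk about a preimage onto something containing the radius-$2\eps_{k-1}$ disk about its image, with bounded distortion — this is essentially already extracted in the proof of Lemma \ref{escape control}, where it is shown $|f_c(z+t)-f_c(z)|$ lies between $\eps_n$ and $2\eps_{n-1}$. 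So a point within $r_n \le \eps_n/(\text{const})$ of $J_c$ lies in a level-$n$ disk and is pushed forward, level by level, staying within the level-$k$ disk structure until after $n$ iterates it lands within distance $O(1)$ of $J_c$, i.e. inside $\overline D(0, 4|c|^{1/2})$. I expect the main obstacle to be bookkeeping the distortion constants so that the factor $5 \cdot 3^n$ genuinely works for all $n \ge 1$ simultaneously (rather than just for $n$ large): one has to be careful that the Koebe distortion bound, applied $n$ times, contributes a factor that is absorbed by the gap between $2|c|^{1/2}$ (the true derivative lower bound on $J_c$) and $3|c|^{1/2}$ (what $3^n|c|^{n/2}$ provides), and that the additive radius-$1$ fuzz in "$J_c \subset D(b,1)\cup D(-b,1)$" does not accumulate — which is why the statement has $|c|^{(n-2)/2}$ rather than $|c|^{(n-1)/2}$, giving one extra power of $|c|^{1/2}$ of room. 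The base case $n=1$ should follow directly from Lemma \ref{size 1} and Proposition \ref{dynamical distortion} after checking the numerology.
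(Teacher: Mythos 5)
Your overall scheme (push $z$ forward $n$ times, use $\lambda_c(z)=2^{-n}\lambda_c(f_c^n(z))$, and bound $\lambda_c$ on a ``safe'' region by Proposition \ref{dynamical distortion} plus the maximum principle) is viable, but your quantitative accounting of where $f_c^n(z)$ lands is off by a factor of $|c|$, and that is exactly where the proposed proof breaks. With $\dist(z,J_c)< r_n:=(5\cdot 3^n|c|^{(n-2)/2})^{-1}$ and expansion of size $\asymp 2|c|^{1/2}$ per step near $J_c$, after $n$ iterates you only get $\dist(f_c^n(z),J_c)\lesssim (3|c|^{1/2})^n r_n \asymp |c|$, \emph{not} distance $O(1)$ from $J_c$: already for $n=1$ a point at distance $\approx |c|^{1/2}/15$ from $J_c$ maps to distance $\approx |c|/10$ from it. So the claim that the orbit ends inside $\overline D(0,4|c|^{1/2})$ is false for large $|c|$, and the safe region must be taken to be $D(0,|c|)$ (where $\lambda_c\le \log|c|+\log 2$ does hold, by Proposition \ref{dynamical distortion} on the circle $|w|=|c|$ and the maximum principle). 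Relatedly, the induction you propose through the disk structure of Lemma \ref{escape control} cannot be run as stated: your hypothesis $r_n\le \eps_n/(\mathrm{const})$ fails whenever $|c|$ is large compared with $3^{2n}$ (e.g.\ $n=1$, $|c|>225$ gives $r_1>1=\eps_1$), that lemma is only proved for $n\le 3$, and the factor-$2$ slack $|f_c(z+t)-f_c(z)|\le 2\eps_{n-1}$ compounds to $2^k$ after $k$ steps, so the orbit does not stay inside the level-$k$ disks. The same scale error appears in your sketch of the Koebe route: the inverse branches are not merely univalent on a ``scale $\asymp 1$'' neighborhood of $J_c$; the paper applies Theorem \ref{Koebe} on disks $D(z_0,|c|-|c|^{1/2}-1)$ of radius $\asymp |c|$ about points $z_0\in J_c$ (these avoid the critical value $c$), and it is precisely this scale-$|c|$ domain, divided by $|(f_c^n)'|\le 2^n(|c|^{1/2}+1)^n$, that produces the exponent $(n-2)/2$ in the lemma; with a scale-$1$ domain you would only reach radius $\asymp |c|^{-n/2}$, too small by a factor of $|c|$.

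The repair within your forward-iteration framework is short: for $x\in J_c$ the Lipschitz bound $|f_c(z)-f_c(x)|\le |z-x|\,(2(|c|^{1/2}+1)+|z-x|)\le 3|c|^{1/2}|z-x|$ (valid as long as $|z-x|\le |c|^{1/2}-2$, which an easy induction preserves for the orbit when $|c|\ge 25$) gives $\dist(f_c^n(z),J_c)\le 3^n|c|^{n/2}r_n\le |c|/5$, hence $|f_c^n(z)|\le |c|^{1/2}+1+|c|/5<|c|$, and then $\lambda_c(z)\le 2^{-n}(\log|c|+\log 2)<2^{-(n-1)}\log|c|$ as you computed. This fixed version is a genuine alternative to the paper's argument (forward Lipschitz control into $D(0,|c|)$ versus the paper's Koebe $1/4$ theorem applied to inverse branches on scale-$|c|$ disks), but as written your proposal's central step --- landing in $\overline D(0,4|c|^{1/2})$ via the level-disk structure --- would fail.
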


\begin{proof}
The two inverse branches of $f_c$ are univalent on $D(0, |c|)$.  Fix any point $z_0$ in $J_c$.  From Lemma \ref{size 1}, we know that $|z_0| \leq |c|^{1/2} + 1$, so that $f_c$ has two univalent branches of the inverse defined on the disk $D(z_0, |c| - |c|^{1/2} - 1)$ and $|(f_c^n)'(z_0)| \leq 2^n(|c|^{1/2}+1)^n$.  Applying Theorem \ref{Koebe} to the inverse branches of each iterate on these disks about points $z_0 \in J_c$, we find
	$$f_c^{-n}D(z_0, |c| - |c|^{1/2} - 1) \supset D\left(f_c^{-n}(z_0), \frac{|c| - |c|^{1/2} - 1}{4 \cdot 2^n (|c|^{1/2} + 1)^n }\right).$$
From Proposition \ref{dynamical distortion} (and the maximum principle for $\lambda_{c}$), we know that $\lambda_{c}(z) \leq \log|c| + \log 2$ on $D(0, c)$, and therefore 
	$$\lambda_{c}(z) \leq \frac{1}{2^n} (\log|c| + \log 2)$$
on each of these disks of radius $(|c| - |c|^{1/2} - 1)/(4 \cdot 2^n (|c|^{1/2} + 1)^n )$ about points in the Julia set.  Finally, we observe that 
	$$\frac{|c| - |c|^{1/2} - 1}{4 \cdot 2^n (|c|^{1/2} + 1)^n } \geq \frac{|c|(1 - |c|^{-1/2} - |c|^{-1})}{4 \cdot 2^n |c|^{n/2}(1 + |c|^{-1/2})^n} \geq \frac{|c|(19/25)}{4 \cdot 2^n |c|^{n/2} (6/5)^n} \geq \frac{1}{5 \cdot 3^n \, |c|^{(n-2)/2}}$$
for all $|c| \geq 25$.	
\end{proof}

\begin{prop} \label{archimedean epsilon}
Fix $L \geq 27$.  For all $0 < r < 1/4$ and for all $c \in \C$, we have
	$$\lambda_c(z) \leq r \log \max\{|c|, L\}$$
for every $z$ in a neighborhood of radius 
	$$\frac{1}{(\max\{|c|, L\})^{3 \log(1/r)}}$$
around the filled Julia set $K_c$.  
\end{prop}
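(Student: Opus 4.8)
The plan is to treat the two regimes $|c|\ge 25$ and $|c|<25$ separately, the cutoff being compatible with $L\ge 27$. Write $M=\max\{|c|,L\}$, so $M\ge 27$. If $z\in K_c$ then $\lambda_c(z)=0$ and there is nothing to prove, so assume $z\notin K_c$. Then the point of $K_c$ closest to $z$ lies on $\partial K_c=J_c$, so $\dist(z,K_c)=\dist(z,J_c)$, and it suffices to bound $\lambda_c(z)$ for $z$ with $\dist(z,J_c)<M^{-3\log(1/r)}$.

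For $|c|\ge 25$ this is essentially a substitution into Lemma \ref{shrinking rate}. Choose the integer $n=\lceil 1+\log_2(1/r)\rceil$; since $0<r<1/4$ we have $4\le n\le 2+\log_2(1/r)$, hence $2^{-(n-1)}\le r$, and Lemma \ref{shrinking rate} gives $\lambda_c(z)<2^{-(n-1)}\log|c|\le r\log|c|\le r\log M$ as soon as $\dist(z,J_c)<(5\cdot 3^n|c|^{(n-2)/2})^{-1}$. So it remains only to check $M^{-3\log(1/r)}\le (5\cdot 3^n|c|^{(n-2)/2})^{-1}$, i.e.\ $M^{3\log(1/r)}\ge 5\cdot 3^n|c|^{(n-2)/2}$. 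Using $M\ge|c|$ and $(n-2)/2\le\tfrac12\log_2(1/r)<3\log(1/r)$, this reduces to $M^{3\log(1/r)-(n-2)/2}\ge 5\cdot 3^n$, which holds for $M\ge 27$ and $1/r>4$ by a direct computation: after using $M\ge 27$, both sides become explicit powers of $1/r$, and the left-hand exponent (roughly $7.5\log(1/r)$) beats the right-hand one (roughly $1.6\log(1/r)$) comfortably.

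For $|c|<25$ we have $M=L$, and we argue that points near $K_c$ escape slowly. Since the escape radius is below $6$, $K_c\subset D(0,6)$ and $|z|\ge 6$ forces $f_c^k(z)\to\infty$. Let $\delta=\dist(z,K_c)<L^{-3\log(1/r)}$, choose $w\in K_c$ with $|z-w|=\delta$, and let $m\ge 1$ be the first index with $|f_c^m(z)|\ge 12$. For $0\le j<m$ we have $f_c^j(z)\in\overline D(0,12)$ (minimality of $m$) and $f_c^j(w)\in\overline D(0,6)$ (as $w\in K_c$), so from $|f_c(a)-f_c(b)|=|a+b|\,|a-b|$ we get $|f_c^{j+1}(z)-f_c^{j+1}(w)|\le 18\,|f_c^j(z)-f_c^j(w)|$, hence $|f_c^m(z)-f_c^m(w)|\le 18^m\delta$; since the left side is at least $12-6=6$, this yields $18^m\ge 6/\delta\ge 6L^{3\log(1/r)}$, so $m\ge (3\log(1/r)/\log 18)\log L$. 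On the other hand $|f_c^{m-1}(z)|<12$ forces $|f_c^m(z)|<170$, and a one-line iteration estimate using $|c|\le 25$ gives $\lambda_c(w')\le\log|w'|+\log 2$ whenever $|w'|\ge 6$; therefore $\lambda_c(z)=2^{-m}\lambda_c(f_c^m(z))<2^{-m}\log 340$. Combining these, $\lambda_c(z)<6\,L^{-(3\log 2/\log 18)\log(1/r)}$, and $6\,L^{-(3\log 2/\log 18)\log(1/r)}\le r\log L$ for $L\ge 27$ and $r<1/4$ by another short computation.

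The regime $|c|\ge 25$ is bookkeeping on top of Lemma \ref{shrinking rate}. The substance lies in the bounded regime, where that lemma is unavailable and one must quantify directly that a point within $\delta$ of $K_c$ stays in a fixed disk for at least $\asymp\log(1/\delta)$ iterates — equivalently, a Hölder modulus for $\lambda_c$ at $J_c$ that is uniform over $|c|\le 25$. The remaining care is in threading the exponent $3\log(1/r)$ and the threshold $27$ through both cases so that the two elementary endgame inequalities hold simultaneously; the factor $3$ and the bound $L\ge 27$ are exactly what provide the slack (together with $1/r>4$) that makes them go through.
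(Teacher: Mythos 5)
Your argument is correct and follows essentially the same route as the paper: Lemma \ref{shrinking rate} plus exponent bookkeeping when $|c|$ is large, and an escape-time/Lipschitz estimate near $K_c$ (a point within $\delta$ of the filled Julia set needs $\gtrsim \log(1/\delta)$ iterates to escape a fixed disk) when $|c|$ is bounded. The only cosmetic difference is that you cut at $|c|=25$ with absolute constants (disk of radius $12$, Lipschitz constant $18$, and a direct iteration bound in place of Proposition \ref{dynamical distortion} and subharmonicity in $c$), whereas the paper cuts at $|c|=L$; your large-$|c|$ case correctly absorbs the intermediate range $25\le |c|\le L$, since the required radius $(\max\{|c|,L\})^{-3\log(1/r)}$ only shrinks as $\max\{|c|,L\}$ grows.
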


\begin{proof}
First assume that $|c| > L$.  Note that $J_c = K_c$ in this case.  Lemma \ref{shrinking rate} states that 
	$$\lambda_c(z) \leq \frac{1}{2^{n-1}} \log|c|$$
whenever $\dist(z, J_c) < (5\cdot 3^n |c|^{(n-2)/2})^{-1}$, for any $n\geq 1$.  For $L \geq 27 = 3^3$, we have $5 \cdot 3^n = 15 \cdot 3^{n-1} < L^{1 + (n-1)/3} = L^{(n+2)/3}$.  Therefore, 
	$$5\cdot 3^n |c|^{(n-2)/2} \leq L^{(n+2)/3} |c|^{(n-2)/2} < |c|^{5n/6}.$$
For each $n\geq 3$, we set $r_n = 1/2^{n-1}$, so that $n = \log(1/r_n)/(\log 2) + 1$.  Choose any monotone decreasing function $\kappa$ of $r \in (0,1/4]$ so that 
	$$\kappa(r_n) \geq 5(n+1)/6 = \frac{5}{6} \frac{1}{\log 2} \log(1/r_n) + \frac{5}{3} \approx 1.2 \log(1/r_n) + \frac53$$
for all $n\geq 3$.  Then $|c|^{-\kappa(r_n)} \leq |c|^{-5(n+1)/6} < (5\cdot 3^n |c|^{(n-1)/2})^{-1}$, so that any $z$ satisfying $\dist(z, J_c) < |c|^{-\kappa(r_n)}$ will also satisfy $\lambda_c(z) \leq \frac{1}{2^n} \log |c| = r_{n+1} \log |c|$, for all $n\geq 3$, by Lemma \ref{shrinking rate}.  In particular, we can take $\kappa(r) = 3 \log (1/r)$.  For any $r < 1/4$, we choose $n\geq 3$ so that $r_{n+1} \leq r < r_n$; then $\kappa(r) > \kappa(r_n)$, so $\dist(z, J_c) \leq |c|^{-\kappa(r)}$ implies that  
	$$\lambda_c(z) \leq r_{n+1} \log|c| \leq r \log |c|.$$
This proves the proposition for $|c| > L$.

Now assume $|c| \leq L$.  For $|c|>2$, Proposition \ref{dynamical distortion} implies that if $|z| > 2^{3/2} |c|^{1/2}$, then 
	$$\lambda_c(z) \leq \log |z| + \log 2.$$
Consider the circle of radius $L$.  For all $|c| \leq L$, we have $2^{3/2} |c|^{1/2} \leq 2^{3/2} L^{1/2}  < L$, so that 
\begin{equation} \label{circle L}
	\lambda_c(z) \leq \log L + \log 2,
\end{equation}
for all $2 < |c| \leq L$ and for all $|z| = L$.  But then, fixing $z$, and using the fact that $\lambda_c(z)$ is subharmonic in $c$, we obtain the inequality \eqref{circle L} for all $|c| \leq L$ and all $|z| = L$.  

Furthermore, for all $|c| > 2$ and $|z| \geq 2^{3/2}|c|^{1/2}$, we have the lower bound that 
	$$\lambda_c(z) \geq \log |z| - \log 2 \geq \frac12 \log(2|c|) > 0$$
so that the Julia set is contained in a disk of radius $2^{3/2} |c|^{1/2} \leq 2^{3/2}L^{1/2}$.  On the other hand, for $|c| \leq 2$, it is easy to compute that the filled Julia set lies in a closed disk of radius 2, so we have 
	$$K_c \subset D(0, 2^{3/2}L^{1/2})$$
for all $|c| \leq L$.  In particular, the distance between $K_c$ and the circle of radius $L$ is at least 
	$$L - 2^{3/2}L^{1/2} > 12.$$

For a fixed positive integer $n$ and $|c| \leq L$, suppose $z$ is any point within distance $12/(2L)^n$ of $K_c$.  Let $z_0 \in K_c$ denote the closest point to $z$.  As $|f_c'(z)| = |2z| \leq 2L$ for all $|z| \leq L$, we find that  
	$$|f^n_c(z) - f^n_c(z_0)| \leq (2L)^n|z-z_0| < 12.$$
In other words, $f^n_c(z)$ lies within the circle of radius $L$, so that 
	$$\lambda_c(z) = \frac{1}{2^n} \lambda_c(f^n_c(z))  \leq \frac{1}{2^n} (\log L + \log 2) \leq \frac{1}{2^{n-1}} \log L$$
from \eqref{circle L}, for all $z$ within distance $12/(2L)^n$ of the set $K_c$, and for all $|c| \leq L$.  

Note that $2^8/12 < 27 \leq L$ and $2^4 < L$, and so 
	$$12/(2L)^n \geq 1/(2^{n-8}L^{n+1}) \geq 1/L^{(n-8)/4 + n + 1} =1/L^{\frac{5}{4}n - 1}$$
For each $n\geq 3$, we set $r_n = 1/2^{n-1}$ as before, so that $n = \log(1/r_n) / \log 2 + 1$.  Then
	$$\frac{5}{4} (n+1) -1 = \frac{5}{4 \log 2} \log(1/r_n) + \frac{3}{2} \approx 1.8 \log(1/r_n) + \frac{3}{2}$$
As above, we set $\kappa(r) = 3 \log (1/r)$ for $r \in (0, 1/4]$.  For any $r < 1/4$, we choose $n\geq 3$ so that $r_{n+1} \leq r < r_n$; then $\kappa(r) > \kappa(r_n) > \frac54 (n+1) -1$.  Consequently, for all $z$ within distance $1/L^{3 \log(1/r)}$ of the filled Julia set $K_c$, we have that $z$ lies within distance $12/(2L)^{n+1}$ of $K_c$, and therefore 
	$$\lambda_c(z) \leq \frac{1}{2^n} \log L <  r \log L.$$
This completes the proof of the proposition.
\end{proof}

%%%%%%
\bigskip
\section{Bounds on the archimedean pairing}\label{archpairingbounds}

In this section, we provide estimates on the archimedean contributions to the pairing $\< f_{c_1}, f_{c_2}\>$, to obtain a local version of Theorem \ref{pairingbound}.  As in the previous section, we work with $c \in \C$, Euclidean absolute value $|\cdot |$ and archimedean escape-rate function $\lambda_c$.   We let $\mu_{c} = \frac{1}{2\pi} \Delta \lambda_c$ denote the equilibrium measure supported on the Julia set $J_c$.  Where possible, we provide explicit constants, even if they are not optimal, for our estimates of the Euclidean energy
		$$E_\infty(c_1,c_2):= \int \lambda_{c_1} \, d\mu_{c_2} = \int \lambda_{c_2} \, d\mu_{c_1}.$$

\begin{theorem} \label{archimedeanbounds} There exist constants $C, C'>0$ so that 
$$\frac{1}{16} \log^+|c_1 - c_2| - C \leq E_\infty(c_1,c_2) \leq \frac{1}{2} \log^{+}\max \{|c_1|,|c_2| \} + C'$$
for all $c_1, c_2 \in \mathbb{C}$.  Furthermore, there exists $L > 0$ so that if $r := \max\{|c_1|, |c_2|\} \geq L$ and 
$$\frac{3}{r^{1/2}} \leq |c_1 - c_2|,$$
then 
$$\frac{1}{64} \log \max \{|c_1|, |c_2| \} \leq E_\infty(c_1,c_2) .$$
\end{theorem}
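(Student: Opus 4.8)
The common tool is the identity $E_\infty(c_1,c_2)=\int\lambda_{c_1}\,d\mu_{c_2}=\int\lambda_{c_2}\,d\mu_{c_1}$, with the option to integrate $\lambda_{c_1}$ against $\mu_{c_2}$ or $\lambda_{c_2}$ against $\mu_{c_1}$ as convenient; since $\lambda_c(z)=\log|z|+o(1)$ near $\infty$ one also has $\lambda_c=\int\log|\,\cdot-w|\,d\mu_c(w)$, so $E_\infty=\iint\log|z-w|\,d\mu_{c_1}(z)\,d\mu_{c_2}(w)$, though this will not be needed. Thus every bound reduces to controlling one escape rate on the support of the other equilibrium measure, for which Proposition \ref{dynamical distortion}, Lemma \ref{size 1}, and Lemma \ref{escape control} are exactly what \S\ref{archestimates} supplies. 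For the upper bound, Proposition \ref{dynamical distortion} gives $\lambda_{c_1}(z)\le\log|z|+\log 2$ on $|z|=2^{3/2}|c_1|^{1/2}$, and the maximum principle for the subharmonic function $\lambda_{c_1}$ carries this inward (the case $|c_1|\le 2$, where $K_{c_1}\subset\overline{D}(0,2)$, being handled by hand), so $\lambda_{c_1}(z)\le\tfrac12\log^+|c_1|+\tfrac52\log 2$ whenever $|z|\le\max\{2,2^{3/2}|c_1|^{1/2}\}$. Taking $|c_2|\le|c_1|=:r$ by symmetry and using that $J_{c_2}=\supp\mu_{c_2}$ lies in the disk of radius $\max\{2,2^{3/2}|c_2|^{1/2}\}\le\max\{2,2^{3/2}r^{1/2}\}$, this estimate applies on all of $J_{c_2}$, and integrating against the probability measure $\mu_{c_2}$ yields $E_\infty\le\tfrac12\log^+r+\tfrac52\log 2$.

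For the lower bounds the mechanism is: if a $\mu_{c_2}$-fraction $\rho$ of $J_{c_2}$ lies outside the level-$n$ disks $D_n(c_1)$ of Lemma \ref{escape control}, then $E_\infty=\int\lambda_{c_1}\,d\mu_{c_2}\ge\rho\,2^{-(n+1)}\log|c_1|$. Reduce to $|c_1-c_2|$ large, so $r=\max\{|c_1|,|c_2|\}\ge|c_1-c_2|/2$ is large, and take $|c_1|=r\ge|c_2|$. If $|c_2|\le r/64$ (which covers $|c_2|<25$, since $r$ is large), then $J_{c_2}$ sits in a disk about $0$ of radius $\le\tfrac12\sqrt r$ while $D_1(c_1)$ lies at distance $\ge\sqrt r-1$; they are disjoint, so $\rho=1$, $n=1$, and $E_\infty\ge\tfrac14\log r$. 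Otherwise $r/64<|c_2|\le r$; after replacing $\sqrt{c_2}$ by $-\sqrt{c_2}$ if necessary (the zero set of $f_c^n$ is invariant under $z\mapsto-z$) set $a=|\sqrt{c_1}-\sqrt{c_2}|\le|\sqrt{c_1}+\sqrt{c_2}|=:b$, so $|c_1-c_2|=ab$ and $b\le 2\sqrt r$. If $a\ge 3$ then $D_1(c_1)$ and $D_1(c_2)\supset J_{c_2}$ are at distance $\ge 1$ apart, so again $\rho=1$, $n=1$, $E_\infty\ge\tfrac14\log r$. If $a<3$ then $|c_1|$ and $|c_2|$ are comparable, with ratio $1+O(r^{-1/2})$, and $|c_1-c_2|=ab<6\sqrt r$; here I pass to $n=3$, with radii $\eps_3(c_i)=(2|c_i|)^{-1}=\tfrac1{2r}(1+o(1))$.

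In the case $a<3$, the hypothesis $|c_1-c_2|\ge 3/\sqrt r$ forces $a\ge 3/(2r)>2\eps_3$, so no level-$3$ disk of $c_2$ meets the corresponding level-$3$ disk of $c_1$; and a computation extending the expansion \eqref{binomial estimate} by one further preimage — which locates the level-$3$ preimages of $0$ and shows distinct ones for a single $f_c$ are pairwise separated by $\gtrsim r^{-1/2}$ — shows that at least half of the $2^3$ level-$3$ disks of $c_2$, hence at least half of $\mu_{c_2}$, avoid $D_3(c_1)$. Thus $\rho\ge\tfrac12$ and $E_\infty\ge\tfrac12\cdot 2^{-4}\log r=\tfrac1{32}\log r\ge\tfrac1{64}\log r$, which is the ``furthermore'' clause. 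For the main lower bound of the theorem, note that in the $a<3$ case $\log^+|c_1-c_2|\le\tfrac12\log r+\log 6$, so $\tfrac1{16}\log^+|c_1-c_2|\le\tfrac1{32}\log r+O(1)\le E_\infty+O(1)$; in the $n=1$ cases $\log^+|c_1-c_2|\le\log r+\log 2$ while $E_\infty\ge\tfrac14\log r$, which is more than enough; and for $|c_1-c_2|$ (hence $r$) bounded the inequality is trivial for a suitable $C$.

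The delicate, constant-sensitive point is the last separation claim. When $a\asymp|c_1-c_2|/\sqrt r$ happens to be close to one of the level-$3$ spacings (themselves of size $\asymp r^{-1/2}$), some level-$3$ disks of $c_2$ land essentially on top of \emph{non}-corresponding level-$3$ disks of $c_1$, so one cannot expect all of them to be free; one must instead show that the level-$3$ preimage pattern of $f_{c_1}$ is rigid enough that at most half of $c_2$'s disks can be absorbed in this way. This requires the one-step refinement of \eqref{binomial estimate} with explicit error terms, matched against $a$ and $\eps_3$, and it is precisely here that the threshold $3/\sqrt r$, the constant $L$, and the slack in the final $\tfrac1{64}$ are determined; everything else is a routine application of the estimates of \S\ref{archestimates}.
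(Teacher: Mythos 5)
Your upper bound and your ``easy'' lower-bound cases are correct and essentially identical to the paper's argument: the upper bound via Proposition \ref{dynamical distortion} plus subharmonicity, and the regimes where $|c_2|$ is much smaller than $r$ or $|\sqrt{c_1}-\sqrt{c_2}|$ is bounded below both reduce to Lemma \ref{size 1} with the level-$1$ disks, exactly as in the paper's Cases 0 and 1. Where you diverge is the near-diagonal regime: the paper splits it into $2/r^{1/2}\le|\sqrt{c_1}-\sqrt{c_2}|<2$ (handled with the level-$2$ disks and the expansions of $\beta,\beta'$) and $3/(2r)\le|\sqrt{c_1}-\sqrt{c_2}|<2/r^{1/2}$ (handled with the level-$3$ disks, the expansions $s,s'$ and estimate \eqref{more delicate}), in each case proving only that \emph{one} disk together with its $\pm$ mirror escapes $D_n(c_1)$ -- via the symmetry argument ``if $s(c_2)$ is captured by a non-corresponding disk then $s'(c_2)$ is far from all of them'' -- which is why the paper lands on $1/64$ rather than your $1/32$. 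You instead propose to treat the whole range $a:=|\sqrt{c_1}-\sqrt{c_2}|\in[3/(2r),3)$ at level $3$ in one stroke, claiming that at least half of the eight level-$3$ disks of $c_2$ avoid $D_3(c_1)$.

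That claim is the entire content of the theorem's hard direction, and you have not proved it; your final paragraph concedes that the needed extension of \eqref{binomial estimate} ``with explicit error terms, matched against $a$ and $\eps_3$'' is exactly where the threshold, the constant $L$, and the constant $1/64$ get determined, i.e.\ the work is deferred rather than done. Note also that a single uniform argument over the whole range of $a$ is not routine: when $a$ is comparable to the intra-cluster spacing $\asymp r^{-1/2}$, the correct statement is that in each cluster at most one of the two $c_2$-roots can be captured (because $s'(c_2)-s(c_1)\approx i/\sqrt{r}\gg\eps_3$), whereas when $a\approx 1$ an entire $c_2$-cluster can land on a non-corresponding $c_1$-cluster and both of its roots can be absorbed, and the free half of $\mu_{c_2}$ must then come from the other clusters being at distance $\approx 1$ from everything -- two different mechanisms that your one-line separation claim conflates. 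So the architecture is sound and parallels the paper, but the separation analysis (the paper's Cases 2 and 3, including the partner-disk symmetry trick and the explicit control \eqref{more delicate} of the big-$O$ terms) is a genuine gap in your write-up, not a routine verification.
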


\begin{remark} \label{arch constants}
The proof shows that we can take $L = 1000$, $C = \frac{1}{16} \log 2L < 1/2$, and $C' = \log 8$ in Theorem \ref{archimedeanbounds}.
\end{remark}

\subsection{Proof of Theorem \ref{archimedeanbounds}}
Throughout this proof, we will assume for notational convenience that 
	$$r = |c_1| \geq |c_2|.$$  
We proceed by cases, determined by just how close the two parameters are.  In each case, we estimate the value of $\lambda_{c_1}$ on the Julia set $J_{c_2}$.  We prove the second statement first, providing a lower bound on $E_\infty$ when $c_1$ and $c_2$ are not too close, assuming $r = |c_1|$ is sufficiently large.  Then we return to the first statement of the theorem.  \\

{\bf Case 0.} Suppose $|c_2| \leq 25$.  For $|c_2| \leq 2$, it is straightforward to compute that the filled Julia set satisfies $K_{c_2} \subset \overline{D(0,2)}$.  For $2 < |c_2| \leq 25$, Proposition \ref{dynamical distortion} provides a lower bound of 
	$$\lambda_{c_2}(z) \geq \log |z| - \log 2 \geq \frac12 \log(2|c_2|) > 0$$
for $|z| \geq 2^{3/2}|c_2|^{1/2}$.  Therefore, the Julia set of $f_{c_2}$ is contained in a disk of radius $2^{3/2} |c_2|^{1/2} \leq 2^{3/2}5$.  Thus, for all $|c_2| \leq 25$ and $|c_1| > (2^{3/2}\cdot 5+1)^2 \approx 229.3$, Lemma \ref{size 1} implies that $\lambda_{c_1}(z) \geq \frac{1}{4} \log |c_1|$ for all $z \in J_{c_2}$.  This gives
	$$\int \lambda_{c_1} \, d\mu_{c_2} \geq \frac{1}{4} \log |c_1|$$
for $|c_2|\leq 25$ and $|c_1| \geq 230$.\\
 
In the following three cases, we assume that $r = |c_1| \geq |c_2| \geq 25$.  The cases are separated according to the distance $|\sqrt{c_1} - \sqrt{c_2}|$ between the square roots of $c_1$ and $c_2$.  Observe that
$$|\sqrt{c_1} - \sqrt{c_2}| < \frac{3}{2|c_1|} \implies  |c_1 - c_2| < \frac{3}{2|c_1|} (2|c_1|^{1/2}) \leq \frac{3}{|c_1|^{1/2}},$$
so these three cases will complete the proof of the second statement of the theorem.  
\\

{\bf Case 1.} Suppose that for any choice of square roots, we have $|\sqrt{c_1} - \sqrt{c_2}| \geq 2.$  By Lemma \ref{size 1} we have $\lambda_{c_1}(z) \geq \frac{1}{4} \log |c_1|$ for all $z \in J_{c_2},$ so
$$\int \lambda_{c_1}(z) \ d \mu_{c_2} \geq \frac{1}{4} \log |c_1|$$
for $|c_1| \geq |c_2| \geq 25$.  

{\bf Case 2.} Suppose that there is a choice of square roots for which $\frac{2}{r^{1/2}} \leq |\sqrt{c_1} - \sqrt{c_2}| < 2.$  With these choices of square roots, the solutions of $f_{c}^2(z) = 0$ are
$$\beta(c) = i \sqrt{c} + \frac{1}{2} + \frac{i}{8\sqrt{c}} + O\left(\frac{1}{|c|}\right)$$
and
$$\beta'(c) = i \sqrt{c} - \frac{1}{2} + \frac{i}{8 \sqrt{c}} + O\left(\frac{1}{|c|}\right),$$
along with $-\beta(c)$ and $-\beta'(c).$  By Lemma \ref{escape control}, if the disk $D(\beta(c_2), 1/|2c_2|^{1/2})$ does not intersect any disk of radius $1/|2c_1|^{1/2}$ about a solution of $f_{c_1}^2(z) = 0,$ then for all $z \in D(\beta(c_2), 1/|2c_2|^{1/2})$ we have 
$$\lambda_{c_1}(z) \geq \frac{1}{8} \log |c_1|,$$
and since the same is true for the disk centered at $-\beta(c_2)$ by $\pm$ invariance, the inequality is satisfied for a set of $\mu_{c_2}$-measure $1/2$.  Therefore,
$$\int \lambda_{c_1} \ d\mu_{c_2} \geq \frac{1}{16} \log |c_1|.$$
On the other hand, as $ |\sqrt{c_1} - \sqrt{c_2}| < 2$, if $D(\beta(c_2), 1/|2c_2|^{1/2})$ intersects any disk of radius $1/|2c_1|^{1/2}$ about a solution of $f_{c_1}^2(z) = 0,$ that disk must be centered at either $\beta(c_1)$ or $\beta'(c_1),$ since $|\beta(c_2) + \beta(c_1)| \geq |c_1|^{1/2}$ and similarly for $\beta(c_2) + \beta'(c_1).$  We have
$$\beta(c_1) - \beta(c_2) = i(\sqrt{c_1} - \sqrt{c_2}) + \frac{i}{8} \left( \frac{1}{\sqrt{c_1}} - \frac{1}{\sqrt{c_2}} \right) + O\left( \frac{1}{|c_2|} \right),$$
so that using the assumed bounds, we have
$$|\beta(c_1) - \beta(c_2)| \geq \frac{2}{|c_1|^{1/2}} - \frac{1}{8} \left( \frac{4 }{|c_1 c_2|^{1/2}} \right) + O\left( \frac{1}{|c_2|} \right) = \frac{2}{|c_1|^{1/2}} + O \left( \frac{1}{|c_2|} \right),$$
using for the middle term the crude bound $|c_1 - c_2| \leq 4|c_1|^{1/2}$ implied by $|\sqrt{c_1} - \sqrt{c_2}| < 2$.  Then, exactly as in \eqref{binomial estimate} in the proof of Lemma \ref{escape control}, we can take 
   $$|\beta(c_1) - \beta(c_2)|\geq \frac{2}{|c_1|^{1/2}}-\frac{5}{2}\frac{1}{|c_2|},$$
because $|c_1| \geq |c_2| \geq 25$.  Since $|c_2|^{1/2} > |c_1|^{1/2} - 2$,  taking $|c_1| \geq 230$ is enough to guarantee this distance will be larger than $2(1/|2c_2|^{1/2})$, and the disks $D(\beta(c_1), 1/|2c_1|^{1/2})$ and $D(\beta(c_2), 1/|2c_2|^{1/2})$ will be disjoint.   Similarly we deduce that the disks $D(\beta'(c_1), 1/|2c_1|^{1/2})$ and $D(\beta'(c_2), 1/|2c_2|^{1/2})$ are disjoint.

But observe also that if
$$|\beta(c_2) - \beta'(c_1)| < \frac{2}{|2c_2|^{1/2}} = \frac{\sqrt{2}}{|c_2|^{1/2}},$$
then $\beta'(c_2)$ must be far from both $\beta'(c_1)$ and $\beta(c_1)$, because 
\begin{eqnarray*} |\beta'(c_2) - \beta(c_1)| &=& |\beta'(c_2) - \beta(c_2) +  \beta(c_2) -\beta'(c_1)+ \beta'(c_1) - \beta(c_1)| \\
&\geq& |\beta'(c_2) -  \beta(c_2) + \beta'(c_1) - \beta(c_1)| - \frac{\sqrt{2}}{|c_2|^{1/2}} \\
&=& 2 - \frac{\sqrt{2}}{|c_2|^{1/2}} -4\cdot \frac{5}{4}\frac{1}{|c_2|}.
\end{eqnarray*}
We therefore have, for $r = |c_1|\geq 230$ and square roots satisfying $\frac{2}{r^{1/2}} \leq |\sqrt{c_1} - \sqrt{c_2}| < 2$, at least one of the four disks of radius $1/|2c_2|^{1/2}$ around a solution to $f_{c_2}^2(z)=0$ is disjoint from the four disks of radius $1/|2c_1|^{1/2}$ about the four solutions of $f_{c_1}^2(z) = 0$.  By the $\pm$ symmetry, two of these disks must be disjoint.  As these two disks carry $1/2$ of the measure $\mu_{c_2}$, we have by Lemma \ref{escape control} that 
$$\int \lambda_{c_1} \ d \mu_{c_2} \geq \frac{1}{16} \log |c_1|.$$

{\bf Case 3.} Suppose there is a choice of square roots for which 
$$\frac{3}{2r} \leq |\sqrt{c_1} - \sqrt{c_2}| < \frac{2}{r^{1/2}}.$$
We will argue precisely as in Case 2, but with the third preimages of 0 rather than second.  Two solutions of $f_{c}^3(z) = 0$ have the form 
$$s(c) := i \sqrt{c} + \frac{1}{2} - \frac{i}{8 \sqrt{c}} + \frac{1}{8c} + O \left( \frac{1}{|c|^{3/2}} \right)$$
and
$$s'(c) := i \sqrt{c} + \frac{1}{2} + \frac{3i}{8 \sqrt{c}} - \frac{1}{8c} + O \left( \frac{1}{|c|^{3/2}} \right).$$
From the Taylor expansion, and the fact that $|c|>100$, the above big-O's satisfy the following estimate, to be proved below:
\begin{equation} \label{more delicate}
  \left|s(c) - \left(i \sqrt{c} + \frac{1}{2} - \frac{i}{8 \sqrt{c}} + \frac{1}{8c}\right) \right|\leq 5\frac{1}{|c|^{\frac{3}{2}}}
\end{equation}
and similarly for $s'(c)$.  Notice that under the action of $f_c$, we have $s(c) \mapsto \beta(c)$ and $s'(c) \mapsto \beta'(c),$ and that both $s(c)$ and $s'(c)$ are distance at least 1/2 from all other solutions of $f^3_c(z)$ (except each other).  

If the disk of radius $1/|2c_2|$ about $s(c_2)$ intersects any disk of radius $1/|2c_1|$ about a solution of $f_{c_1}^3(z) = 0$, then that disk must be centered at either $s(c_1)$ or $s'(c_1)$, because of the form of the power series expansions of the various third preimages of 0.  If this disk $D(s(c_2), 1/|2c_2|)$ is disjoint from both $D(s(c_1), 1/|2c_1|)$ and $D(s'(c_1), 1/|2c_1|)$, then from the $\pm$ symmetry and Lemma \ref{escape control}, we have
$$\int \lambda_{c_1} \ d \mu_{c_2} \geq \frac{1}{4 \cdot 16} \log |c_1| = \frac{1}{64} \log |c_1|.$$ 

Now, we have by our assumed bounds that $|\sqrt{c_1} - \sqrt{c_2}| < 2 |c_1|^{-1/2}$, so that 
	$$|c_1 - c_2| = |(\sqrt{c_1}-\sqrt{c_2})(\sqrt{c_1}+\sqrt{c_2})| < \frac{4|c_1|^{1/2}}{|c_1|^{1/2}} = 4,$$
and therefore, 
$$|s(c_1) - s(c_2)| \geq   \frac{3}{2|c_1|} - \frac{2}{8|c_2|^{3/2}}- \frac{10}{|c_2|^{\frac{3}{2}}} > \frac{1}{|c_2|}$$
for $|c_1|\geq 1000$.  So the disks $D(s(c_1), 1/|2c_1|)$ and $D(s(c_2), 1/|2c_2|)$ are disjoint.  But if 
$$|s'(c_1) - s(c_2)| <  \frac{1}{|c_2|},$$
then
\begin{eqnarray*} |s(c_1) - s'(c_2)| &=& |s(c_1) - s'(c_1) + s'(c_1) - s(c_2) + s(c_2) - s'(c_2)| \\
&\geq& |s(c_1) - s'(c_1) + s(c_2) - s'(c_2)| - \frac{1}{|c_2|} \\
& \geq&\left| \frac{-i}{2} \left( \frac{1}{\sqrt{c_1}} + \frac{1}{\sqrt{c_2}}\right) +  2O \left( \frac{1}{|c_2|^{3/2}} \right) \right|  - \frac{1}{|c_2|} \\
& =&\left| \frac{-i}{2} \left( \frac{\sqrt{c_1} + \sqrt{c_2}}{\sqrt{c_1c_2}}\right) +  2O \left( \frac{1}{|c_2|^{3/2}} \right) \right|  - \frac{1}{|c_2|} \\
&\geq&\left| \frac{-i}{2} \left( \frac{\sqrt{c_1} + \sqrt{c_2}}{\sqrt{c_1c_2}}\right)\right| - \frac{10}{|c_2|^{\frac{3}{2}}}- \frac{1}{|c_2|}\\
&\geq & \frac{1}{2|c_1|^{1/2}} > \frac{1}{|c_2|}
\end{eqnarray*}
for $|c_1|\geq 1000$.  We conclude in this case that the disk $D(s'(c_2), 1/|2c_2|)$ is disjoint from the eight disks of radius $1/|2c_1|$ about solutions of $f_{c_1}^3(z) = 0$, and hence (again using symmetry and Lemma \ref{escape control}) we have
$$\int \lambda_{c_1} \ d \mu_{c_2} \geq \frac{1}{64} \log |c_1|.$$

\bigskip\noindent{\bf Proof of estimate \eqref{more delicate}.}
From the estimate \eqref{binomial estimate}, we have 
   $$\beta=i\sqrt c-\frac{1}{2}+\frac{i}{8\sqrt{c}}+a$$
with $|a|\leq 5/4|c|$ whenever $|c|\geq 25$. Furthermore, let us assume that 
$$s=\sqrt{-c+\beta}=i\sqrt{c}\left(1+\frac{1}{i\sqrt{c}}+\frac{1}{2c}+\frac{1}{8i\sqrt c^3}-\frac{a}{c}\right)^{1/2}.$$
For convenience, we set 
   $$b=\left(1+\frac{1}{i\sqrt{c}}+\frac{1}{2c}+\frac{1}{8i\sqrt c^3}-\frac{a}{c}\right)^{1/2} \;\mbox{ and }\;
   e=\frac{1}{i\sqrt{c}}+\frac{1}{2c}+\frac{1}{8i\sqrt c^3}-\frac{a}{c}$$
and then one has 
\begin{equation}\label{b expansion} b=(1+e)^{1/2}=1+\frac{1}{2}e-\frac{1}{8}e^2+\frac{1}{16}e^3+\sum_{n\geq 4}C_{1/2}^ne^n
\end{equation}
where $C_{1/2}^n$ are the binomial coefficients. In the following, we assume that $|c|\geq 100$, 
so that $e$ can be estimated as $|e|\leq \frac{11}{10}\frac{1}{\sqrt{|c|}}$. Consequently as $|C_{1/2}^n|<1$, we have
   $$\left| \sum_{n\geq 4}C_{1/2}^ne^n \right|\leq 1.7\frac{1}{|c|^2}\quad\textup{ and }\quad \frac{1}{2}\left| e-\left(\frac{1}{i\sqrt{c}}+\frac{1}{2c}+\frac{1}{8i\sqrt c^3}\right)\right|=\left|
   \frac{a}{2c}\right|\leq \frac{5}{8|c|^2},$$
and moreover
   $$\frac{1}{8}\left|e^2-\left(-\frac{1}{c}+\frac{1}{i\sqrt{c}^3}\right) \right|\leq \frac{1}{4}{|c|^2} \quad\textup{ and }\quad \frac{1}{16}\left|e^3-\left(-\frac{1}{i\sqrt{c}^3}\right) \right|\leq \frac{1}{4|c|^2}.$$
Finally, we get an estimate on $b$ using the expansion \eqref{b expansion} and therefore the estimate \eqref{more delicate} on $s$ since $s=i\sqrt{c}\cdot b$.  This completes the proof of \eqref{more delicate}.

\bigskip
We are now ready to prove the first statement of the theorem.  Choose any $L\geq 1000$.  If  $|c_1 - c_2| \leq 2L$, then the lower bound on $E_\infty$ holds trivially with the constant $ \frac{1}{16} \log 2L$.  In particular, it holds whenever $\max \{ |c_1|, |c_2| \} \leq L$.

Now suppose that $|c_1 - c_2| \geq \max \{ |c_1|, |c_2| \} > L$.  Then the hypotheses of either Case 0 or 1 hold, and we have
$$\frac{1}{8} \log^+|c_1 - c_2| \leq \frac{1}{4} \log |c_1| \leq \int \lambda_{c_1}(z) \ d \mu_{c_2},$$
as needed.  On the other hand, if $\max \{ |c_1|, |c_2| \} > L$ and $2L < |c_1 - c_2| < \max \{ |c_1|, |c_2| \}$, then the hypotheses of either Case 0, 1, or 2 hold, and we have
$$\frac{1}{16} \log^+|c_1 - c_2| \leq \frac{1}{16} \log^+ \max \{ |c_1|, |c_2| \} \leq \int \lambda_{c_1} \ d \mu_{c_2}.$$
Thus, we have proved the lower bound in the first statement of the theorem,
$$\frac{1}{16} \log^+|c_1 - c_2| - C \leq \int \lambda_{c_1} \ d \mu_{c_2}$$
for all $c_1, c_2\in \C$, with $C = \frac{1}{16} \log 2L.$ 

To prove the upper bound, suppose first that $|c_1| = \max \{ |c_1|, |c_2| \} > 2.$  For $|c_2| \geq 2,$ by Proposition \ref{dynamical distortion}, the Julia set of $f_{c_2}$ is contained in the disk $D(0, 2^{3/2}|c_2|^{1/2})$.  For $|c_2|\leq 2$, we have $J_{c_2} \subset \overline{D(0, 2)}$.  By Proposition \ref{dynamical distortion}, we have by the Maximum Principle that
\begin{equation} \label{on a circle}
\lambda_{c_1}(z) \leq \frac{3}{2} \log 2 + \frac{1}{2} \log |c_1| +\log 2
\end{equation}
for all $z \in D(0, 2^{3/2}|c_1|^{1/2})$ (which contains $J_{c_2}$).  

On the other hand, for $|c_1| = \max \{ |c_1|, |c_2| \} \leq 2$, we use the fact that $\lambda_{c_1}(z)$ is subharmonic in both $z$ and $c_1$, so that the inequality \eqref{on a circle} holds on the circle $\{|z| = 4\}$, replacing $|c_1|$ with 2, for all $|c_1| \leq 2$.  

Applying this inequality to $z\in J_{c_2}$ we see that 
$$\int \lambda_{c_1} \ d \mu_{c_2} \leq \frac{1}{2} \log^+ \max \{|c_1|, |c_2| \} + \log 8$$
for all $c_1, c_2 \in \C$.  
This completes the proof of Theorem \ref{archimedeanbounds}.

%%%%%%
%%%%%%
\bigskip
\section{Nonarchimedean bounds for prime $p \ne 2$} \label{not2p}

Let $c_1 \not= c_2$ be two elements of $\Qbar$.  Fix a number field $K$ containing $c_1$ and $c_2$, and fix a non-archimedean place $v$ of $K$ which does {\em not} lie over the prime $p=2$.  Let $K_v$ denote the completion of $K$ with respect to $|\cdot|_v$, and let $\C_v$ denote the completion of an algebraic closure of $K$.  In this section, we provide estimates on  the local energy
	$$E_v:= \int \lambda_{c_1, v} \, d\mu_{2,v} = \int \lambda_{c_2,v} \, d\mu_{1,v}.$$
Because the place $v$ is fixed throughout this section, we will drop the dependence on $v$ in the absolute value $|\cdot|_v$, denote the local Julia set of $f_c$ (in the Berkovich affine line $\A^{1,an}_v$ defined over $\C_v$) by $J_{c}$, its escape rate by $\lambda_c$, and the equilibrium measure by $\mu_c$.

\begin{theorem} \label{not 2 bound} Fix a number field $K$ and place $v$ of $K$ that does not divide the prime $p=2$.  For all $c_1, c_2 \in K$, we have 
$$\frac{1}{4} \log^+|c_1 - c_2|  \leq E_v \leq \frac{1}{2} \log^{+}\max \{|c_1|,|c_2| \}.$$
Furthermore, if $r := |c_1| = |c_2| > 1$ and 
$$ |c_1 - c_2| > \frac{1}{r^{1/2}},$$
then 
$$E_v \geq \frac{1}{16} \log r .$$
\end{theorem}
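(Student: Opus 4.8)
\bigskip

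The plan is to run the argument of Theorem~\ref{archimedeanbounds} in the far more rigid non-archimedean setting, where, because $|2|=1$, the dynamics of $f_c(z)=z^2+c$ is completely explicit. First I would record the local picture. If $r:=|c|\le 1$ then $f_c$ has good reduction: $\lambda_c=\log^+|z|$ and $\mu_c$ is the Dirac mass at the Gauss point. If $r>1$ then $\lambda_c(z)=\log\|z\|$ for $\|z\|>r^{1/2}$, $\lambda_c(z)=\tfrac12\log r$ for $\|z\|<r^{1/2}$, the filled Julia set equals $J_c$ and lies in the Berkovich disk $\overline D(0,r^{1/2})$ (in fact in the ``sphere'' $\{|z|=r^{1/2}\}$), and $\lambda_c$ takes the value $\tfrac12\log r$ at the boundary point of that disk. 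Moreover every solution of $f_c^n(z)=0$ has $|z|=r^{1/2}$ and $|(f_c^n)'(z)|=r^{n/2}$ there, so, setting $\eps_n:=r^{-(n-1)/2}$, the map $f_c^n$ carries $\overline D(z,\eps_n)$ bijectively onto $\overline D(0,r^{1/2})$.

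From this I would prove the exact non-archimedean analogues of Lemmas~\ref{size 1} and \ref{escape control}: writing $D_n(c)$ for the union of the $2^n$ disks $\overline D(z,\eps_n)$ over the solutions of $f_c^n(z)=0$, these disks are pairwise disjoint (the binomial expansions of the preimages of $0$, as in \eqref{binomial estimate}, put the nearest critical point of $f_c^n$ at distance $>\eps_n$), each carries $\mu_c$-mass $2^{-n}$ (balancedness of $\mu_c$ together with the bijectivity above), so $J_c\subset D_n(c)$; and since $f_c^n$ maps the boundary point of each $\overline D(z,\eps_n)$ to that of $\overline D(0,r^{1/2})$, the harmonic function $\lambda_c$ is identically $2^{-(n+1)}\log r$ on $\partial D_n(c)$, whence $\lambda_c\ge 2^{-(n+1)}\log r$ off $D_n(c)$ by the minimum principle.

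The bounds then follow. For the upper bound: if $|c_1|>|c_2|$ then $\supp\mu_{c_2}\subset\{\|z\|<|c_1|^{1/2}\}$, where $\lambda_{c_1}\equiv\tfrac12\log|c_1|$; if $|c_1|=|c_2|=r>1$ then $\supp\mu_{c_2}\subset\overline D(0,r^{1/2})$ and subharmonicity of $\lambda_{c_1}$ gives $\lambda_{c_1}\le\tfrac12\log r$ there; either way $E_v\le\tfrac12\log^+\max\{|c_1|,|c_2|\}$. For the lower bounds the only case with content is $r:=|c_1|=|c_2|>1$. Writing $\omega_i$ for a square root of $-c_i$ (so $\pm\omega_i$ are the level-$1$ preimages of $0$ under $f_{c_i}$ and $|\omega_i|=r^{1/2}$), the identity $|c_1-c_2|=|\omega_1-\omega_2|\,|\omega_1+\omega_2|$ together with $\max(|\omega_1-\omega_2|,|\omega_1+\omega_2|)=|2\omega_1|=r^{1/2}$ forces $\min(|\omega_1-\omega_2|,|\omega_1+\omega_2|)=|c_1-c_2|/r^{1/2}$, and this single quantity replaces the three cases of the archimedean proof. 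If $|c_1-c_2|/r^{1/2}>1$, the four disks comprising $D_1(c_1)$ and $D_1(c_2)$ are pairwise disjoint, so $\lambda_{c_1}\ge\tfrac14\log r$ on all of $D_1(c_2)$ (which has full $\mu_{c_2}$-mass) and $E_v\ge\tfrac14\log r$. If $1<|c_1-c_2|\le r^{1/2}$, level $1$ collapses, but the expansions of $f_{c_i}^{-2}(0)$ show $D_2(c_2)$ is disjoint from $D_2(c_1)$ — the relevant distances being $|\omega_1-\omega_2|$, $1$, or $r^{1/2}$, all $>\eps_2=r^{-1/2}$ — so $E_v\ge\tfrac18\log r$, which dominates $\tfrac14\log^+|c_1-c_2|$ since $|c_1-c_2|^2\le r$. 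Finally, if $r^{-1/2}<|c_1-c_2|\le 1$ the same argument at level $3$ applies, because $|\omega_1-\omega_2|=|c_1-c_2|/r^{1/2}>r^{-1}=\eps_3$ now separates the relevant disks, giving $E_v\ge\tfrac1{16}\log r$; and $|c_1-c_2|\le 1$ makes the first lower bound automatic in this range. Together with the trivial cases $\max\{|c_1|,|c_2|\}\le 1$ and $|c_1|\ne|c_2|$, this proves both assertions.

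The main obstacle is the bookkeeping in the level-$2$ and level-$3$ disjointness statements: one must check, via the binomial expansions of $f_{c_i}^{-2}(0)$ and $f_{c_i}^{-3}(0)$ (the analogues of $\beta,\beta'$ and $s,s'$ in Section~\ref{archpairingbounds}), that no disk of $D_n(c_2)$ accidentally coincides with a disk of $D_n(c_1)$, including the borderline subcases where two terms of an expansion have equal absolute value and could cancel. As in Section~\ref{archpairingbounds}, this is handled by carrying the expansions one order past the claimed precision and by the $\pm$-symmetry of $D_n(c)$, which ensures that even when a few disks coincide, enough survive to carry $\mu_{c_2}$-mass bounded below. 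The clean ultrametric inequalities make this noticeably less delicate than the corresponding archimedean analysis, but it is still where the work lies.
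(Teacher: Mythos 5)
Your setup (the level-$n$ preimage disks $D_n(c)$ of radius $\eps_n=r^{-(n-1)/2}$, each of $\mu_c$-mass $2^{-n}$, with $\lambda_c\ge 2^{-(n+1)}\log r$ off $D_n(c)$ by the minimum principle) is sound, and your generic cases reproduce the paper's Cases 1, 3 and 5. The gap is in the borderline cases $|c_1-c_2|=r^{1/2}$ and $|c_1-c_2|=1$, which your ranges include. There the blanket disjointness you assert is false: when $|c_1-c_2|=r^{1/2}$ one has $(\beta_1-\beta_2')(\beta_1+\beta_2')=(b_1+b_2)+(c_2-c_1)$ with both terms of absolute value $r^{1/2}$, so this product equals $r^{1/2}\,|1+(b_1-b_2)|$ and can be arbitrarily small; the cross-distance is then not ``$|\omega_1-\omega_2|$, $1$, or $r^{1/2}$'', and the radius-$\eps_2$ disks can literally coincide (the same happens at level $3$ when $|c_1-c_2|=1$). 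Your fallback --- $\pm$-symmetry saves half the disks --- is the right idea (and the disjointness of the other pair does hold, but only via the residue computation $2\alpha_1=-2\alpha_2\Rightarrow\alpha_1=-\alpha_2$, which is exactly where $p\neq 2$ enters and which you never verify), yet it does not close the numbers inside your framework: on the surviving half your minimum principle only gives $\lambda_{c_1}\ge\tfrac18\log r$ at level $2$ (resp.\ $\tfrac1{16}\log r$ at level $3$), so after losing half the mass you get $E_v\ge\tfrac1{16}\log r<\tfrac14\log|c_1-c_2|=\tfrac18\log r$ when $|c_1-c_2|=r^{1/2}$, and $E_v\ge\tfrac1{32}\log r<\tfrac1{16}\log r$ when $|c_1-c_2|=1$, falling short of both stated inequalities.

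The paper's Cases 2 and 4 close precisely this gap by a finer fact that a closed-disk minimum principle cannot see: the surviving half of $J_{c_2}$ lies in branches at the coarser Type II point ($\zeta_{b_1,1}$, resp.\ $\zeta_{\beta_1,\,1/r^{1/2}}$) containing no point of $J_{c_1}$, and on such a branch $\lambda_{c_1}$ is constant, equal to the coarser-level value $\tfrac14\log r$ (resp.\ $\tfrac18\log r$); equivalently, one tracks the orbit and finds $|f_{c_1}(z)|=r^{1/2}$ (resp.\ $|f_{c_1}(z)|=1$, $|f_{c_1}^2(z)|=r^{1/2}$) for those $z$. That extra factor of $2$ in the exponent is exactly what produces $E_v\ge\tfrac14\log^+|c_1-c_2|$ and $E_v\ge\tfrac1{16}\log r$ as stated. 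So you must either add this branch-level (or orbit) computation on the surviving half, together with the explicit $p\neq 2$ residue argument guaranteeing that half survives, or accept constants strictly weaker than those in the theorem.
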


We also prove an estimate on $\lambda_c$ from above, at points near the $v$-adic Julia set of $f_c$, that will be needed for the proof of Theorem \ref{boundingN}.

\subsection{Structure of the Julia set}
We work with the dynamics of $f_c$ on the Berkovich affine line $\A^{1,an}_v$, associated to the complete and algebraically closed field $\C_v$, and we denote by $\zeta_{x,r}$ the Type II point corresponding to the disk of radius $r \in \Q_{>0}$ about $x$.  We refer to \cite[Chapter 8]{Benedetto:book} for more information about the Julia set on the Berkovich affine line, and to the article \cite{Benedetto:Briend:Perdry} for more information about the Julia sets of quadratic polynomials.

For $|c| \leq 1$, the map $f_c$ has good reduction, so that $J_c = \zeta_{0,1}$ is the Gauss point and $\lambda_c(z) = \log^+|z|$.  For $|c| > 1$, the Julia set of $f_c$ is a Cantor set of Type I points, lying in the union of the two open disks $D(\pm b, |c|^{1/2})$ with $f_c(\pm b) = 0$.  In particular, all points $z \in J_{c,v}$ will satisfy $|z| = |c|^{1/2}$.  For any point $z$ with absolute value $|z|> |c|^{1/2}$, we have $|f^n(z)| = |z|^{2^n}$ for all $n\geq 1$, so that 
\begin{equation} \label{lambda value5}
	\lambda_c(z) = \log|z| \quad\mbox{for}\quad |z|> |c|^{1/2}
\end{equation}
and
\begin{equation} \label{upper bound on lambda5}
	\lambda_c(z) \leq \frac12 \log|c| \quad\mbox{for}\quad |z| \leq |c|^{1/2}.
\end{equation}
Taking one further preimage of 0, we may choose $\beta$ and $\beta'$ so that 
\begin{equation} \label{beta definition5}
	f_c(\beta) = b, \quad  f_c(\beta') = -b, \quad |\beta - b| = |\beta'-b| = |\beta - \beta'| = 1,
\end{equation}
and the Julia set will lie in the union of the four disks $D(\pm \beta, 1)$ and  
$D(\pm \beta', 1)$.  
See Figure \ref{nonarch Julia5}.

\begin{figure} [h]
\includegraphics[width=3.5in]{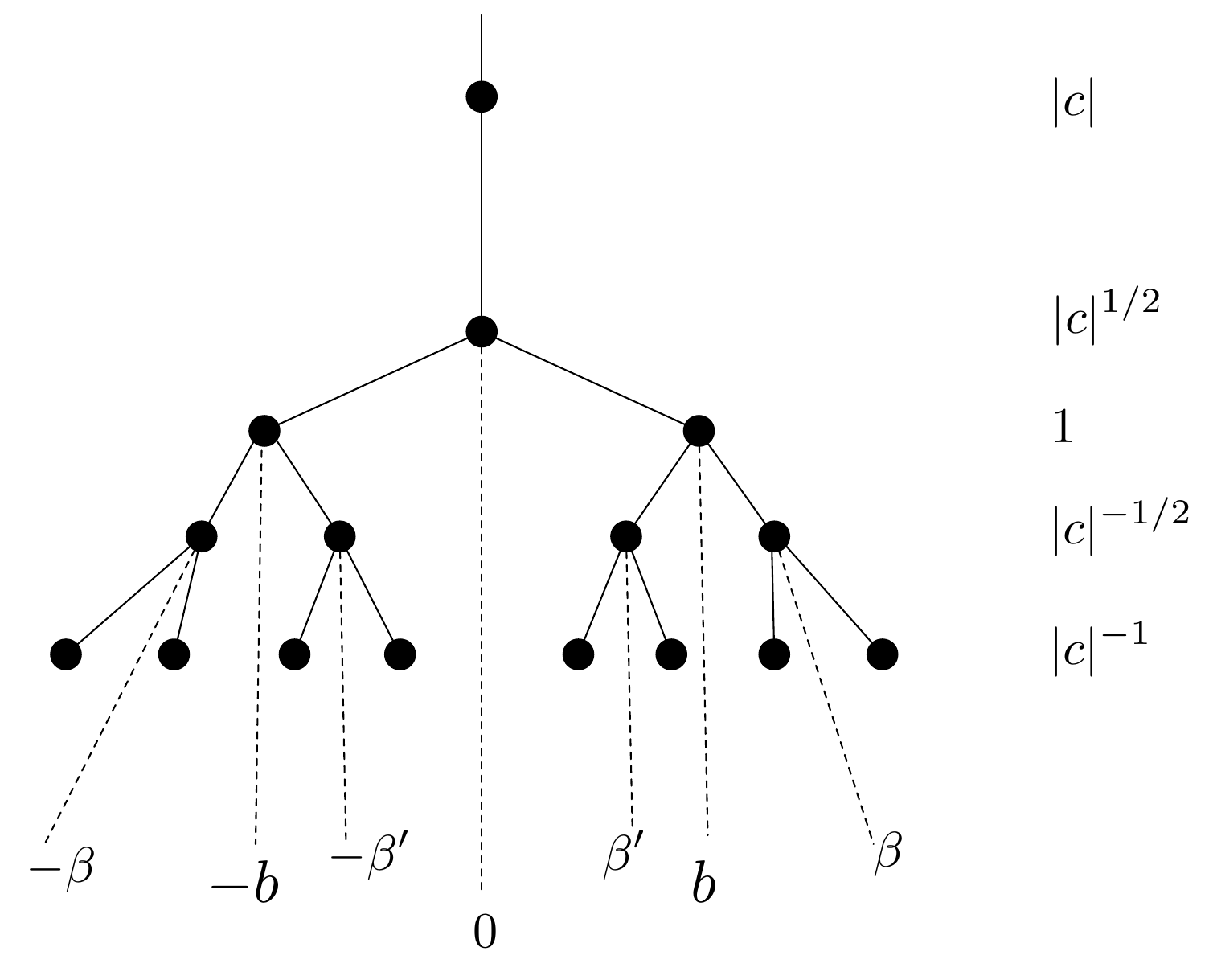}
\caption{ \small The tree structure of the non-archimedean Julia set, with $|c|_v >1$ and $v\not| ~2$.}
\label{nonarch Julia5}
\end{figure}

We will repeatedly exploit the symmetry of the Julia set $J_c$ during the proof of Theorem \ref{not 2 bound}.  For example, identifying the branches from the Type II point $\zeta_{b, 1}$ with the elements of $\P^1(\overline{\mathbb{F}}_p)$ (where we always identify the branch containing $\infty$ as $\infty\in \P^1(\overline{\mathbb{F}}_p)$), and denoting the class of $z \in \C_v$ by $\tilde{z}$, we have 
\begin{equation} \label{linear symm}
	\tilde{\beta} = \tilde{b} + \alpha \quad \mbox{ and } \tilde{\beta}' = \tilde{b} - \alpha
\end{equation}
for some $\alpha \in \overline{\mathbb{F}}_p$, because the transformation from $\zeta_{b,1}$ to its image $f_c(\zeta_{b,1}) = \zeta_{0, |c|^{1/2}}$ is affine in these local coordinates.  In other words, the disks containing the Julia set are centered around the preimages of 0.  The same symmetry holds for the iterated preimages of $\zeta_{b,1}$ and $\zeta_{-b,1}$; the branches containing the Julia set will be symmetric about the preimages of 0, independent of the choice of coordinates, because the iterated map to $\zeta_{0,|c|^{1/2}}$ is affine.

For the proof of Theorem \ref{not 2 bound}, it is also important to keep in mind how distances scale under iteration.  For all $x \in J_c$ and all $z = x+y$ with $|y| < |c|^{1/2}$, we have 
\begin{equation} \label{general p scale}
	|f_c(z) - f_c(x)|_v = |2x y + y^2 |_v = |y| |c|^{1/2}.
\end{equation}

\subsection{Proof of Theorem \ref{not 2 bound}}

If $|c_1|$ or $|c_2|$ is $\leq 1$, then because of good reduction, we have 
	$$E_v = \frac12 \max\{\log^+|c_1|, \log^+|c_2|\} = \frac12 \log^+|c_1-c_2|.$$

If $|c_1|$ and $|c_2|$ are both $>1$, then we can split into further cases.  For $|c_1| > |c_2|$, we have 
	$$\lambda_{c_2}(z_1) = \frac12 \log |c_1|$$
from \eqref{lambda value5} for all points $z_1$ in the Julia set $J_{c_1}$.  Similarly for $|c_1| < |c_2|$, and therefore, 
	$$E_v = \frac12 \max\{\log^+|c_1|, \log^+|c_2|\} = \frac12 \log^+|c_1-c_1|.$$

For the remainder of the proof we assume that 
	$$r := |c_1|=|c_2|>1.$$  
From \eqref{upper bound on lambda5}, we will have 
	$$\lambda_{c_2}(z_1) \leq \frac12 \log|c_2|$$
at all points $z_1$ of the Julia set $J_{c_1}$.   Therefore, 
	$$E_v \leq \frac12 \log |c_2|= \frac12 \log r,$$
proving the upper bound in the theorem.  

For the lower bound on $E_v$, we now break the proof into cases, depending on how close the two parameters are to one another.  

\medskip
{\bf Case 1.}  Assume that 
	$$1 < r^{1/2} < s:= |c_1 - c_2| \leq r = |c_1|= |c_2|.$$
Let $z_1$ be any point in the Julia set $J_{c_1}$.   Then its image $z_1^2 + c_1$ must lie in one of the disks $D(\pm b_1, r^{1/2})$, where $f_{c_1}(\pm b_1)=0$, and have absolute value $r^{1/2}$, so that $f_{c_2}(z_1) = z_1^2 + c_2 = (z_1^2 + c_1) + (c_2 - c_1)$ satisfies 
	$$|f_{c_2}(z_1)| = s > r^{1/2}.$$  
It follows that $|f_{c_2}^n(z_1)| = s^{2^{n-1}}$ for all $n$.  This gives
	$$\lambda_{c_2}(z_1) = \frac12 \log s = \frac12 \log|c_1-c_2|$$
for all $z_1$ in the Julia set of $f_{c_1}$.  Therefore, 
	$$E_v = \frac12 \log |c_1 - c_2| > \frac14 \log r.$$

\medskip
{\bf Case 2.}
Now suppose $|c_1-c_2| = r^{1/2}$, and recall that $b_i^2 = -c_i$, for $i = 1,2$.  Note that 
\begin{equation} \label{b product}
	(b_1 + b_2)(b_1-b_2) = b_1^2 - b_2^2 =  c_2 - c_1
\end{equation}
and at least one of the factors on the left hand side has absolute value $r^{1/2}$ so the other must have absolute value 1.  Let's assume that 
	$$|b_1 - b_2| = 1.$$
If the two branches from $\zeta_{b_1, 1} = \zeta_{b_2,1}$ containing $J_{c_1}$ are disjoint from those containing $J_{c_2}$, then for any element $z_2 \in J_{c_2}$ we have 
	$$|f_{c_1}(z_2)| = r^{1/2} \mbox{ and } |f_{c_1}^n(z_2)| = (r^{1/2})^{2^{n-1}} \mbox{ for all } n\geq 2$$
so that 
	$$\lambda_{c_1}(z_2) = \frac14 \log r = \frac12 \log|c_1-c_2|$$
for all $z_2 \in J_{c_2}$, and 
	$$E_v =  \frac14 \log r = \frac12 \log|c_1-c_2|.$$
	
However, it can happen that one of the branches from $\zeta_{b_1,1}$ intersecting $J_{c_1}$ does coincide with a branch intersecting $J_{c_2}$.  Note that from \eqref{beta definition5}, we have
\begin{equation} \label{beta product}
(\beta_1 - \beta_2)(\beta_1+\beta_2) = \beta_1^2 - \beta_2^2 = b_1 - c_1 - (b_2 - c_2) = (b_1 - b_2) + (c_2 - c_1),
\end{equation}
and the right-hand-side has absolute value $|c_1-c_2| = r^{1/2}$, so that 
	$$|\beta_1 - \beta_2| = 1.$$
But we could have $D(\beta_1, 1) = D(\beta_2', 1)$.  Indeed, 
	$$(\beta_1 - \beta_2')(\beta_1+ \beta_2') = (b_1 + b_2) + (c_2 - c_1)$$
and the terms on the right-hand-side might cancel to give absolute value smaller than $r^{1/2}$.  
But by the symmetry of the disks around the points $b_i$, as explained in \eqref{linear symm}, if $D(\beta_1, 1) = D(\beta_2', 1)$, then the other disks $D(\beta_1',1)$ and $D(\beta_2,1)$ must be disjoint.  Indeed, if $\tilde{b}_1+\alpha_1 = \tilde{\beta_1} = \tilde{\beta_2}'= \tilde{b}_2 - \alpha_2$ and $\tilde{b}_1-\alpha_1 =\tilde{\beta_1}' = \tilde{\beta_2} =   \tilde{b}_2 + \alpha_2$ in $\overline{\mathbb{F}}_p$, then 
	$$2\alpha_1 = -2\alpha_2 \implies \alpha_1 = -\alpha_2 \mbox{ because } p \not=2,$$
so we must have $\tilde{b}_1 = \tilde{b}_2$, which contradicts the fact that $|b_1 - b_2| = 1$.  

It follows that for all $z_2 \in D(\beta_2',1)$, one has
	$$\lambda_{c_1}(z_2) = \frac14 \log r.$$
By the symmetry of the Julia sets, this will also hold for points in the disk $D(-\beta_2',1)$, and together they make up half (with respect to the measure $\mu_{c_2}$) of $J_{c_2}$.  Therefore, 
	$$E_v \geq \frac18 \log r = \frac14 \log|c_1-c_2|.$$

\medskip
{\bf Case 3.}  Assume that 
	$$1 < s:= |c_1 - c_2| < r^{1/2}.$$
Then from \eqref{b product}, we can choose $b_1$ and $b_2$ so that
	$$\frac{1}{r^{1/2}} < |b_1-b_2| = \frac{s}{r^{1/2}} < 1.$$
Also, from \eqref{beta product}, we see that 
	$$\frac{1}{r^{1/2}} < |\beta_1 - \beta_2| = \frac{s}{r^{1/2}} < 1$$
and similarly for $\beta_1'$ and $\beta_2'$.  Consequently, the four disks $D(\pm \beta_1, s/r^{1/2})$ and $D(\pm \beta_1', s/r^{1/2})$ are disjoint from the corresponding disks around $\pm \beta_2$ and $\pm \beta_2'$.  Thus, for any $z_2 \in J_{c_2}$, we have 
	$$\inf_{z_1 \in J_{c_1}} |z_2 - z_1| = s/r^{1/2} \quad\mbox{ and } \quad \inf_{z_1 \in J_{c_1}} |f_{c_1}(z_2) - z_1| = s,$$
and therefore
	$$|f_{c_1}^2(z_2)| = sr^{1/2} \quad \mbox{ and } \quad |f_{c_1}^n(z_2)| = (sr^{1/2})^{2^{n-2}} \mbox{ for all } n>2.$$
This gives 
	$$\lambda_{c_1}(z_2) = \frac14 \log(s r^{1/2})$$
for all $z_2 \in J_{c_2}$, and consequently, 
	$$E_v = \frac14 \log(s r^{1/2}) = \frac18 \log r + \frac14 \log |c_1-c_2|.$$  
In particular, we have 
	$$E_v \geq \frac14 \log |c_1-c_2|$$
in this case, completing the proof of the first statement of the theorem.  
		
\medskip
{\bf Case 4.}
Now suppose $|c_1-c_2| = 1$.  The proof here is similar to Case 2, but we work with the disks around $\beta$ and $\beta'$.  From \eqref{b product} and \eqref{beta product} we can choose our preimages of 0 so that
	$$|b_1 - b_2| = |\beta_1 - \beta_2| = |\beta_1'-\beta_2'| = \frac{1}{r^{1/2}}.$$
Let $\gamma_i$ and $\gamma_i'$, for $i=1,2$, denote further preimages of 0, so that $f_{c_i}^3(\gamma_i) = f_{c_i}^3(\gamma_i') = 0$, chosen so that 
\begin{equation} \label{gamma def}
	|\gamma_i - \beta_i| =  |\gamma_i' - \beta_i| = 1/r^{1/2}
\end{equation}
for $i = 1,2$.  Because of the symmetry of the Julia set $J_{c_i}$ around $\beta_i$, for $i = 1,2$,  as explained in \eqref{linear symm}, if for example the disks $D(\gamma_1, 1/r^{1/2})$ and $D(\gamma_2, 1/r^{1/2})$ coincide, then the disks $D(\gamma_1', 1/r^{1/2})$ and $D(\gamma_2', 1/r^{1/2})$ must be disjoint, because $|\beta_1 - \beta_2| = 1/r^{1/2}$.   Similarly for the disks $D(\gamma_1, 1/r^{1/2})$ and $D(\gamma_2', 1/r^{1/2})$, and also for the disks intersecting the Julia sets near $- \beta_i$ and $\pm \beta_i'$.  

It follows that 
	$$\inf_{z_1 \in J_{c_1}}|f_{c_1}(z)- z_1| = 1, \quad  |f_{c_1}^2(z)| = r^{1/2}, \quad \mbox{ and }  |f_{c_1}^n(z)| = (r^{1/2})^{2^{n-2}} \mbox{ for all } n > 2,$$
for at least half of the points $z$ in $J_{c_2}$.  Therefore
	$$\lambda_{c_1}(z) = \frac18 \log r$$
for at least half of $J_{c_2}$, and consequently, 
	$$E_v \geq \frac{1}{16} \log r$$
in all cases with $|c_1 - c_2| = 1$.

\medskip
{\bf Case 5.}  The final case to treat is with 
	$$1/r^{1/2} < s:= |c_1 - c_2| < 1.$$
We can choose preimages $b_1$ and $b_2$ of 0 so that
	$$\frac1r < |b_1 - b_2| = |\beta_1 - \beta_2| = |\beta_1' - \beta_2'| = \frac{s}{r^{1/2}} < \frac{1}{r^{1/2}}$$
from \eqref{b product} and \eqref{beta product}.  Passing to 3rd preimages of 0, as defined by \eqref{gamma def}, we have 
	$$(\gamma_1 - \gamma_2)(\gamma_1 + \gamma_2) = \gamma_1^2 - \gamma_2^2 = (f_{c_1}(\gamma_1) - f_{c_2}(\gamma_2)) + (c_2- c_1), $$
and similarly for $\gamma_i'$.  Thus, they can be chosen so that 
	$$|\gamma_1 - \gamma_2| = |\gamma_1' - \gamma_2'| = s/r^{1/2} > 1/r.$$
Consequently, all points $z_2 \in J_{c_2}$ will satisfy 
	$$\inf_{z_1 \in J_{c_1}} |z_2 - z_1| = s/r^{1/2}$$
and so
	$$|f_{c_1}^3(z_2)| = r^{3/2} \frac{s}{r^{1/2}} = rs \quad \mbox{ and } |f_{c_1}^n(z_2)| = (rs)^{2^{n-3}} \mbox{ for all } n > 3.$$
Therefore
	$$\lambda_{c_1}(z_2) = \frac18 \log(rs)$$
for all points $z_2\in J_{c_2}$, and 
	$$E_v = \frac18 \log(rs)  > \frac{1}{16} \log r.$$
This completes the proof of the theorem.

\subsection{An upper bound on the local height near the Julia set}
We will use the following proposition in the proof of Theorem \ref{boundingN}.  This is a non-archimedean analog to the distortion estimate provided in Proposition \ref{archimedean epsilon}.

\begin{prop} \label{escapebound pnot2} Suppose $v$ is a non-archimedean place of $K$, not dividing 2.  For each $c$ with $|c| > 1$ and all $0 < r < 1$, we have 
	$$\lambda_{c}(z) \leq r \log |c|_v$$
for all $z$ within distance 
	$$\frac{1}{|c|^{\log(1/r)}}$$
of the Julia set $J_c$ in ${\bf P}^1_v$.  For $|c| \leq 1$, we have $\lambda_{c}(z) = 0$ for all $|z|_v \leq 1$.
\end{prop}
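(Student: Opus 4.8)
The plan is to reduce the proposition to the ultrametric nesting of the preimage disks of the Julia set, combined with the a priori bound $\lambda_{c,v}(z)\le\tfrac12\log|c|_v$ on $\{|z|_v\le|c|_v^{1/2}\}$ recorded in \eqref{upper bound on lambda}. The case $|c|_v\le 1$ is immediate: then $f_c$ has good reduction and the disk $\{|z|_v\le 1\}$ is forward invariant, since $|z|_v\le 1$ forces $|f_c(z)|_v=|z^2+c|_v\le\max\{|z|_v^2,|c|_v\}\le 1$; hence $|f_c^n(z)|_v\le 1$ and $\lambda_{c,v}(z)=0$ for all such $z$, and likewise at every Berkovich point of $\overline{D}(0,1)$, that disk being forward invariant and bounded.

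Assume now $|c|_v>1$. First I would extend the structure recalled above to all $n\ge 1$: the set $f_c^{-n}\big(\overline{D}(0,|c|_v^{1/2})\big)$ is a disjoint union of $2^n$ closed disks, the \emph{level-$n$ disks}, each of radius $|c|_v^{-(n-1)/2}$, whose union contains $J_c$; consequently $f_c^n$ carries each level-$n$ disk into $\overline{D}(0,|c|_v^{1/2})$. Only the radius needs comment beyond the cases $n\le 3$ already discussed: since the critical value $c$ satisfies $|c|_v>|c|_v^{1/2}$, it lies in no level-$m$ disk, so $f_c$ restricts to an unramified double cover over each level-$(n-1)$ disk, mapping each level-$n$ disk isomorphically onto it; as the derivative $|2z|_v$ equals $|c|_v^{1/2}$ at the relevant centers (preimages of $0$ under iterates of $f_c$, hence of absolute value $|c|_v^{1/2}$ and away from the critical point, using $v\nmid 2$), the radii scale by $|c|_v^{1/2}$, and starting from radius $1$ at level $1$ one obtains radius $|c|_v^{-(n-1)/2}$ at level $n$.

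Now the main estimate. Fix $n\ge 1$ and suppose $z$ lies within distance $|c|_v^{-(n-1)/2}$ of $J_c$; pick $z_0\in J_c$ with $|z-z_0|_v\le|c|_v^{-(n-1)/2}$ and let $D$ be the level-$n$ disk containing $z_0$. As $D$ is a closed disk of radius $|c|_v^{-(n-1)/2}$, the ultrametric inequality gives $z\in D$, hence $|f_c^n(z)|_v\le|c|_v^{1/2}$, hence $\lambda_{c,v}\big(f_c^n(z)\big)\le\tfrac12\log|c|_v$ by \eqref{upper bound on lambda}; since $\lambda_{c,v}\circ f_c=2\lambda_{c,v}$ this yields
$$\lambda_{c,v}(z)=\frac{1}{2^{n}}\,\lambda_{c,v}\big(f_c^n(z)\big)\ \le\ \frac{1}{2^{n+1}}\log|c|_v .$$
To finish, given $0<r<1$ let $n$ be the least positive integer with $2^{-(n+1)}\le r$. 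The displayed inequality then gives $\lambda_{c,v}(z)\le r\log|c|_v$ for every $z$ within distance $|c|_v^{-(n-1)/2}$ of $J_c$, so it suffices to verify $|c|_v^{-(n-1)/2}\ge|c|_v^{-\log(1/r)}$, i.e. $(n-1)/2\le\log(1/r)$. This is trivial when $n=1$ (which occurs once $r\ge 1/4$); otherwise $n\le\log_2(1/r)$, so $(n-1)/2<\tfrac12\log_2(1/r)=\log(1/r)/(2\log 2)<\log(1/r)$. The argument applies verbatim at Berkovich points of ${\bf P}^1_v$ near $J_c$, since both $\lambda_{c,v}$ and the disk decomposition extend to $\A^{1,an}_v$; alternatively one passes to limits from classical points.

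I do not expect a real obstacle here. Everything rests on the clean identity $|f_c'(z)|_v=|2z|_v=|c|_v^{1/2}$ on the level-$n$ disks, available precisely because $v\nmid 2$, and on the ultrametric fact that a ball of radius $\rho$ centered at a point of $J_c$ is contained in a level-$n$ disk as soon as $\rho$ equals the level-$n$ radius. The only mildly delicate steps are the inductive identification of the level-$n$ radii and the elementary bookkeeping matching $\log(1/r)$ to the dyadic choice of $n$ — the source of the harmless slack between the exponent $\log(1/r)$ in the statement and the sharper $\tfrac12\log_2(1/r)$ that the argument actually produces.
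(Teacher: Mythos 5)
Your proposal is correct and in substance coincides with the paper's own proof: the decisive ingredients are the exact non-archimedean scaling $|f_c(z)-f_c(x)|_v=|z-x|_v\,|c|_v^{1/2}$ near $J_c$ (available because $|2|_v=1$) together with the a priori bound $\lambda_{c,v}\le\tfrac12\log|c|_v$ on $\{|z|_v\le |c|_v^{1/2}\}$, followed by the same dyadic bookkeeping matching $2^{-n}$ to $\log(1/r)$. Your packaging of this scaling as a decomposition into $2^n$ preimage disks of radius $|c|_v^{-(n-1)/2}$ (in the spirit of the archimedean Lemma \ref{escape control}) is slightly more structure than the paper uses --- it simply iterates the distance-scaling along the orbit --- but the underlying estimate is identical, and your bookkeeping (including the $n=1$ case and the verification $(n-1)/2\le\log(1/r)$) is sound.
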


\begin{proof} 
Recall that all points $x$ of the Julia set $J_c$ satisfy $|x| = |c|^{1/2}_v$.  For all $x \in J_c$ and all $z = x+y$ with $|y| < |c|^{1/2}$, we have 
$$|f_c(z) - f_c(x)|_v = |2x y + y^2 |_v = |y| |c|^{1/2}.$$
Recall that $\lambda_c(z) = \log |z|$ for all $|z| > |c|^{1/2}$.   

For any $n\geq 1$ and $\frac{1}{2^n} \leq r < \frac{1}{2^{n-1}}$, we have 
	$$\log\left(\frac1r\right) > (n-1) \log 2 >  \frac{n}{2} - 1.$$
So, for any point $z$ within distance $1/|c|^{\log(1/r)}$ of the Julia set $J_c$, it is also within distance $|c|/|c|^{n/2}$ of the $J_c$, so that we will have 
	$$\lambda_c(z) = 2^{-n} \lambda_c(f^n(z)) \leq 2^{-n} \log |c| \leq r \log |c|.$$  

The proof of the last statement of the proposition is immediate, because $f_c$ has good reduction with $J_c = \zeta_{0,1}$ and $\lambda_{c,v}(z) = \log^+|z|_v$.
\end{proof}

%%%%%%
%%%%%%
\bigskip
\section{Nonarchimedean bounds for prime $p = 2$} \label{2p}

Let $c_1 \not= c_2$ be two elements of $\Qbar$.  Fix a number field $K$ containing $c_1$ and $c_2$, and fix a non-archimedean place $v$ of $K$ which lies over the prime $p=2$.  We assume that $|\cdot|_v$ is normalized so that $|2|_v = \frac12$.  In this section, we provide estimates on the local energy
	$$E_v:= \int \lambda_{c_1, v} \, d\mu_{c_2,v} = \int \lambda_{c_2,v} \, d\mu_{c_1,v}.$$
Because the place $v$ is fixed throughout this section, we will drop the dependence on $v$ in the absolute value $|\cdot|_v$, denote the local Julia set of $f_c$ by $J_{c}$, its escape rate by $\lambda_c$, and the equilibrium measure by $\mu_c$.

\begin{theorem} \label{p2} 
Suppose $c_1$ and $c_2$ lie in a number field $K$, and $v$ is a non-archimedean place of $K$ with $v \mid 2.$    For all $c_1, c_2 \in K$, we have 
$$\frac{1}{16} \log^+ |c_1 - c_2| - \frac14 \log 2 ~\leq~ E_v ~\leq~ \frac{1}{2} \log^+ \max \{|c_1|, |c_2| \}.$$
Furthermore, if $r := |c_1| = |c_2| > 16$ and 
$$ |c_1 - c_2| > \frac{2}{r^{1/2}},$$
then 
$$E_v ~\geq~ \frac{1}{16} \log r - \frac{3}{16}\log 2.$$
\end{theorem}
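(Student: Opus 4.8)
We run the proof of Theorem~\ref{not 2 bound} again, keeping track of the extra factors of $|2|_v$ that the prime $p=2$ introduces. Throughout we use the normalization in which $|\cdot|_v$ restricts to the usual $2$-adic absolute value on $\Q$, so that $|2|_v=1/2$ and the number $16$ in the statement is $|2|_v^{-4}$. First dispose of the easy ranges and the upper bound. If $\min\{|c_1|_v,|c_2|_v\}\le 1$, then the parameter with smaller absolute value yields a map of good reduction --- its reduction still has degree $2$, although it is inseparable in characteristic $2$ --- so its canonical measure is the Dirac mass at the Gauss point $\zeta_{0,1}$, and a direct computation gives $E_v=\tfrac12\log^+\max\{|c_1|_v,|c_2|_v\}=\tfrac12\log^+|c_1-c_2|_v$; the same identity holds when $|c_1|_v\ne|c_2|_v$, using the ultrametric inequality and $\lambda_{c,v}(z)=\log|z|$ for $|z|>\max\{|c|_v^{1/2},1\}$. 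In the remaining range $r:=|c_1|_v=|c_2|_v>1$, the upper bound follows from $\lambda_{c_2,v}\le\tfrac12\log r$ on the closed Berkovich disk $\{|z|\le r^{1/2}\}$ (maximum principle, with value $\tfrac12\log r$ at $\zeta_{0,r^{1/2}}$) together with $J_{c_1}\subset\{|z|\le r^{1/2}\}$.

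The core is the lower bound for $r:=|c_1|_v=|c_2|_v>16$. Here I would first record the $2$-adic Julia-set geometry for $|c|_v=r>16$: the zeros $\pm b$ of $f_c$ satisfy $|b|=r^{1/2}$ and, crucially, $|2b|=\tfrac12 r^{1/2}$, so the two zeros are \emph{close} (at scale $|2|_v r^{1/2}$) rather than well separated as when $p\ne 2$. One checks that $J_c$ lies in the union of the two disjoint disks $\overline{D}(\pm b,2)$, each of $\mu_c$-mass $\tfrac12$, that $f_c$ carries each $\overline{D}(\pm b,2)$ bijectively onto $\overline{D}(0,r^{1/2})$ scaling distances by $\tfrac12 r^{1/2}$, and inductively that the $2^k$ preimages of $0$ under $f_c^k$ form $\pm$-pairs whose surrounding disks --- of radius $\rho_k=2^kr^{-(k-1)/2}$, each carrying $\mu_c$-mass $2^{-k}$ --- are pairwise disjoint in the ranges of $r$ that arise. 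Their centers have power-series expansions in $b=\sqrt{-c}$, and I would record the first few coefficients with effective error bounds exactly as in \eqref{binomial estimate} and \eqref{more delicate}. The lower bound is then a case analysis on $s:=|c_1-c_2|_v$, over the five ranges $s>r^{1/2}$, $s\asymp r^{1/2}$, $1<s<r^{1/2}$, $s\asymp 1$, and $2r^{-1/2}<s<1$ --- the same cases as in Theorem~\ref{not 2 bound}, but with the threshold values perturbed by factors of $|2|_v$. In each range one writes $f_{c_1}(z)=f_{c_2}(z)+(c_1-c_2)$, uses the geometry above to place $J_{c_2}$ relative to the preimage-disks of $f_{c_1}$, and finds that on a subset of $J_{c_2}$ of $\mu_{c_2}$-measure at least $\tfrac12$ one can compute $|f_{c_1}^n(z)|$ exactly for the relevant $n\le4$, with $|f_{c_1}^n(z)|>r^{1/2}$, so that $\lambda_{c_1,v}(z)=2^{-n}\log|f_{c_1}^n(z)|$; integrating against $\mu_{c_2}$ yields $E_v\ge\tfrac1{16}\log r-\tfrac3{16}\log2$, the $\log2$ correction being the accumulation of the factors $|2|_v=1/2$ lost along the orbit.

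The one step that is genuinely new compared with $p\ne2$, and the one I expect to be the main obstacle, occurs in the ``boundary'' ranges $s\asymp r^{1/2}$ and $s\asymp 1$, where one must rule out the possibility that \emph{all} of $J_{c_2}$ is absorbed into the preimage-disks of $f_{c_1}$. The slick symmetry argument of Theorem~\ref{not 2 bound}, that $2\alpha_1=-2\alpha_2$ forces $\alpha_1=-\alpha_2$, is unavailable at $p=2$; it must be replaced by a direct comparison of the power-series expansions of the relevant preimages of $0$ for $c_1$ and for $c_2$, in the spirit of \eqref{binomial estimate} and \eqref{more delicate}, showing that at least one of $c_2$'s preimage-disks remains clear of all of $c_1$'s and hence, by the $z\mapsto-z$ symmetry of $J_{c_2}$, that at least half of $J_{c_2}$ (by $\mu_{c_2}$-measure) does. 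Keeping this comparison effective, while simultaneously tracking the $|2|_v$ factors that dictate the hypothesis $r>16$, is where the real work lies.

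Finally, the first lower bound $\tfrac1{16}\log^+|c_1-c_2|_v\le E_v$ for all $c_1\ne c_2$ is obtained by carrying the $s$-dependence through the same case analysis: each range in fact produces $E_v\ge\tfrac1{16}\log^+|c_1-c_2|_v$ (with $E_v=\tfrac12\log s$ when $s>r^{1/2}$, strictly larger bounds that also grow with $s$ in the intermediate ranges, and the worst bound $\tfrac1{16}\log r-\tfrac3{16}\log2$ only when $s\asymp1$, where $\log^+s\asymp0$), together with the elementary fact that $\log^+|c_1-c_2|_v=0$ once $|c_1-c_2|_v\le1$. The only remaining case is $1<r\le16$, where $\log^+|c_1-c_2|_v\le\log16$ is bounded, $J_c$ now lies in a single disk $\overline{D}(b,r^{1/4})$, and a direct estimate over this bounded range of scales supplies the required positive lower bound on $E_v$.
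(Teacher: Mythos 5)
Your reductions (good reduction, unequal absolute values, the upper bound via $\lambda_{c_2,v}\leq\tfrac12\log r$ on $\{|z|_v\leq r^{1/2}\}$), your description of the $2$-adic geometry ($|2b|_v=r^{1/2}/2$, the scaling law of Lemma \ref{scale factor}, the nested preimage disks of radius $2^k r^{-(k-1)/2}$ of mass $2^{-k}$), and your plan of a case analysis parallel to Theorem \ref{not 2 bound} with thresholds perturbed by powers of $|2|_v$ all match the paper (which runs its cases on $|b_1-b_2|_v$ rather than $|c_1-c_2|_v$, a cosmetic difference). The gap is that the one step you yourself flag as ``where the real work lies'' --- showing, in the boundary ranges, that not all of $J_{c_2}$ can be absorbed into the preimage-disks of $f_{c_1}$ --- is exactly the step you do not carry out, and it is the only part of the theorem that is not a mechanical rerun of the $p\neq2$ argument. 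Your proposed substitute (effective power-series comparisons of the preimages of $0$, in the spirit of \eqref{binomial estimate} and \eqref{more delicate}) is left entirely unexecuted, and you give no reason why it would produce the specific constants of the statement, namely the loss $\tfrac{3}{16}\log 2$ and the threshold $r>16$. As written, the central inequality is asserted, not proved.

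For comparison, the paper closes this step with a two-line exact ultrametric computation, not power series: from \eqref{mixed beta} and \eqref{mixed beta 2}, the quantities $(\beta_1-\beta_2')(\beta_1+\beta_2')=(b_1+b_2)+(c_2-c_1)$ and $(\beta_1'-\beta_2)(\beta_1'+\beta_2)=-(b_1+b_2)+(c_2-c_1)$ differ by $2(b_1+b_2)$, whose absolute value is exactly $|b_1+b_2|_v/2=r^{1/2}/4$, so at most one of the two can be small; hence one of the mixed pairs of disks is at definite distance (at least $1/2$ in the paper's Case 3), and by the $z\mapsto-z$ symmetry half of $\mu_{c_2}$ lives on disks disjoint from those of $f_{c_1}$. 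The same trick one level deeper, using $|2(\beta_1-\beta_1')|_v=1/2$, handles the range $|c_1-c_2|_v=1$ via the third preimages $\gamma_i,\gamma_i'$. So the $\pm$ symmetry argument is not ``unavailable'' at $p=2$, as you claim; it survives in quantified form, and the factors of $|2|_v$ it loses are precisely the source of the correction $-\tfrac{3}{16}\log 2$ and of the hypothesis $r>16$. Since your proposal supplies neither this argument nor a worked-out alternative (and your closing claim that $1<r\leq16$ follows from ``a direct estimate'' is likewise unsubstantiated), the proof is incomplete at its crux.
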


We also prove an estimate on $\lambda_c$ from above, at points near the $v$-adic Julia set of $f_c$, that will be needed for the proof of Theorem \ref{boundingN}.

\subsection{Structure of the Julia set} \label{structure 2}
As in the previous section, we work with the dynamics of $f_c$ on the Berkovich affine line ${\bf A}^{1,an}_v$, associated to the complete and algebraically closed field $\C_v$, and we denote by $\zeta_{x,r}$ the Type II point corresponding to the disk of radius $r \in \Q_{>0}$ about $x$.  

And as before, for $|c| \leq 1$, the map $f_c$ has good reduction, and $J_c = \zeta_{0,1}$ is the Gauss point.  For $|c| > 1$ and for any point $z$ with absolute value $|z|> |c|^{1/2}$, we have $|f^n(z)| = |z|^{2^n}$ for all $n\geq 1$, so that 
\begin{equation} \label{lambda value}
	\lambda_c(z) = \log|z|.
\end{equation}
It is also the case that 
\begin{equation} \label{upper bound on lambda}
	\lambda_c(z) \leq \frac12 \log|c|
\end{equation}
for all $|z| \leq |c|^{1/2}$.  

But unlike the setting of the previous section, the geometry of the Julia set and the dynamics on the associated tree is not constant for all $|c| > 1$.  First, for $1 < |c| \leq 4$, the map $f_c$ has {\em potential good reduction}, so its Julia set is a single Type II point.  For all $|c| > 4$, the Julia set will be a Cantor set of Type I points.  As in the previous section, the Julia set and all iterated preimages of $z=0$ are contained in $\{z \in \C_v:  |z| = |c|^{1/2}\}$, for all $|c| > 4$.  We refer to \cite{Benedetto:Briend:Perdry} for basic information about the Julia set.  

It is important to observe that, for any point $z$ with $|z| = |c|^{1/2}$, we have 
	$$|z - (-z)| = |2z| = |z|/2 = |c|^{1/2}/2,$$
a fact we will use repeatedly in our computations.  Distances between points scale as follows:

\begin{lemma} \label{scale factor}
Suppose $|c|>4$ and $z$ is in the Julia set of $f_c$.  For any $|y| > |c|^{1/2}/2$, we have
	$$|f_c(z+y) - f_c(z)| = |y|^2.$$
For $|y| < |c|^{1/2}/2$, we have
	$$|f_c(z+y) - f_c(z)| = |y| |c|^{1/2}/2.$$
\end{lemma}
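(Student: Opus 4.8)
The plan is to reduce the statement to a single application of the ultrametric inequality after expanding the quadratic. First I would write
$$f_c(z+y) - f_c(z) = (z+y)^2 - z^2 = y\,(2z + y),$$
so that $|f_c(z+y) - f_c(z)| = |y|\,|2z+y|$. Since $|c| > 4$ and $z$ lies in the Julia set, the structure description above (every point of $J_c$ has absolute value exactly $|c|^{1/2}$) gives $|z| = |c|^{1/2}$; combined with $|2| = 1/2$ at our place dividing $2$, this yields $|2z| = |c|^{1/2}/2$.

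Now I would split into the two regimes according to the size of $|y|$ relative to $|2z| = |c|^{1/2}/2$. If $|y| > |c|^{1/2}/2$, then $|y| > |2z|$, so by the ultrametric inequality $|2z + y| = |y|$, and hence $|f_c(z+y) - f_c(z)| = |y|^2$. If instead $|y| < |c|^{1/2}/2$, then $|y| < |2z|$, so $|2z+y| = |2z| = |c|^{1/2}/2$, giving $|f_c(z+y) - f_c(z)| = |y|\,|c|^{1/2}/2$. This establishes both assertions.

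There is no genuine obstacle here. The only points requiring attention are the invocation of the already-established fact that every point of the Julia set has absolute value exactly $|c|^{1/2}$ when $|c| > 4$, and the bookkeeping of the factor $|2| = 1/2$ --- which is precisely the feature distinguishing the dynamics at $p = 2$ from the $p \ne 2$ case treated in Section \ref{not2p}. The sharp equalities (rather than inequalities) in both cases are forced because in each regime the two summands $2z$ and $y$ have distinct absolute values, so no cancellation can occur.
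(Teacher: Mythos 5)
Your proof is correct and is essentially the argument the paper gives: the paper expands $f_c(z+y)-f_c(z)=y^2+2yz$, uses $|z|=|c|^{1/2}$ for Julia set points and $|2|_v=1/2$, and concludes by the same ultrametric comparison of the two terms. Writing the difference as $y(2z+y)$ versus $y^2+2yz$ is only a cosmetic difference.
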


\begin{proof}
Computing the image of $z+y$, we have
	$$f_c(z+y) = (z+y)^2 + c = (z^2+c) + (y^2 + 2y z).$$
Because $z$ lies in the Julia set, we know that $|z| = |c|^{1/2}$, and the result follows.  
\end{proof}

Note that $|c|^{1/2}/2  > 2$ if and only if $|c| > 16$.  We choose $b$ so that $f_c(\pm b) = 0$.  In the case when $|c|>16$, we let $\beta$ and $\beta'$ be further preimages of 0, so that 
\begin{equation} \label{beta definition}
	f_c(\beta) = b, \quad  f_c(\beta') = -b, \mbox{ with } |\beta - \beta'| = 1 \mbox{ and } |\beta - b| = 2.
\end{equation}
Indeed, this is possible because 
	$$(\beta-\beta')(\beta+\beta') = \beta^2 - \beta'^2 = 2b$$
has absolute value $|c|^{1/2}/2$, and so does $|\beta' - (-\beta')|$, so we can assume that $|\beta + \beta'| = |c|^{1/2}/2$ and $|\beta - \beta'| = 1$.  Moreover, as $x = \beta-b$ is a root of the equation $x^2 + 2bx - b = 0$, a Newton polygon argument shows that $|\beta - b|$ can be chosen to be 2, for $|c| > 16$.  Similarly, we choose further preimages $\gamma$ and $\gamma'$ of $0$ so that 
\begin{equation} \label{gamma definition}
	f_c(\gamma) = \beta, \quad  f_c(\gamma') = \beta', \mbox{ with } |\gamma - \gamma'| = 2/|c|^{1/2} \mbox{ and } |\gamma - \beta| = 4/|c|^{1/2}.
\end{equation}
The structure of the Julia set is shown in Figure \ref{nonarch Julia 2} for $|c| > 16$, and it will be useful to refer to the figure while reading the proof of Theorem \ref{p2}.

\begin{figure} [h]
\includegraphics[width=5in]{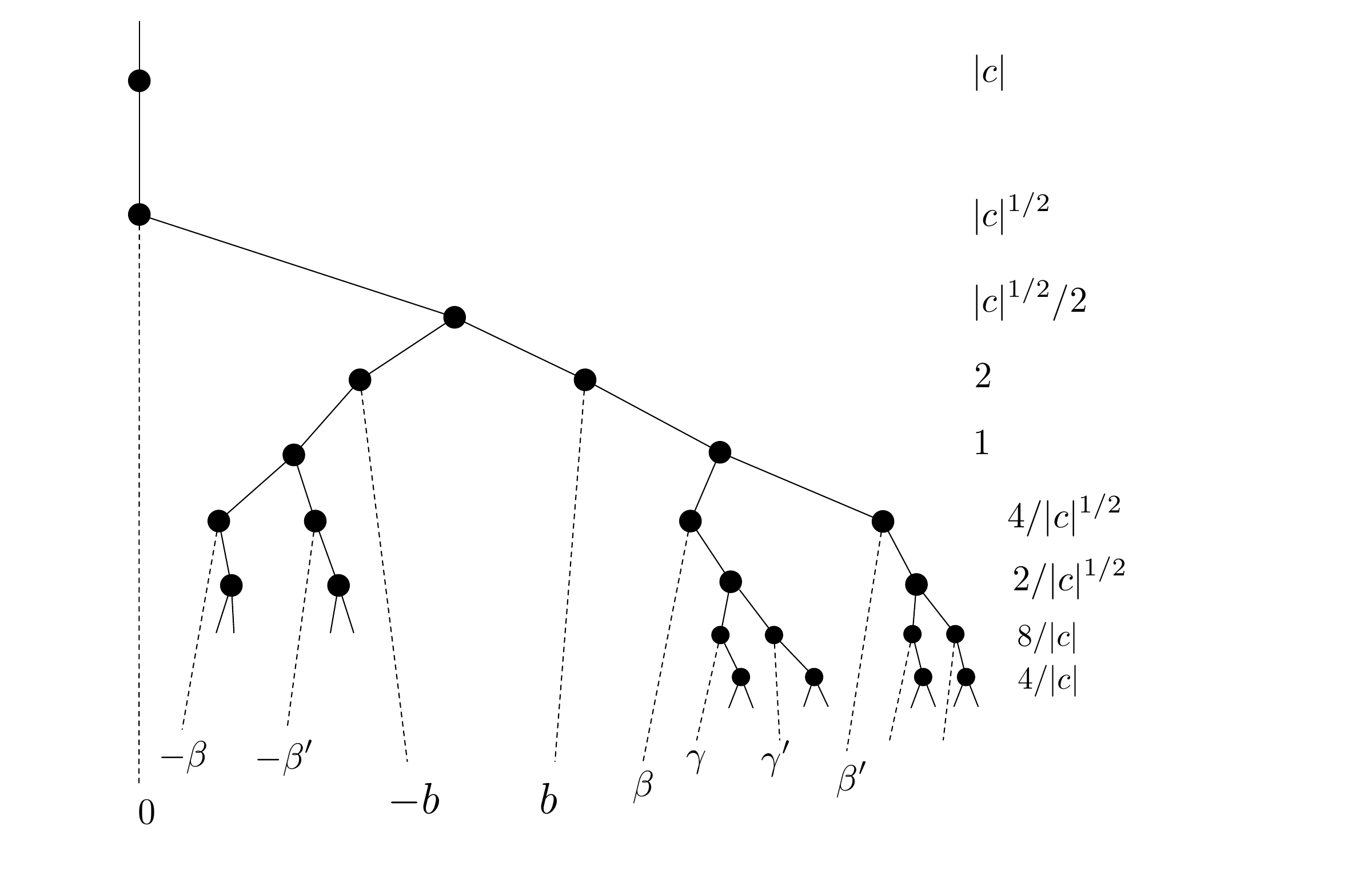}
\caption{ \small The tree structure of the non-archimedean Julia set and some iterated preimages of $0$, for $|c|_v >16$ and $v\mid 2$, vertically ordered by $| \cdot|_v$ as noted on the right.  The solid edges lie in the convex hull of the Julia set and $\infty$.}
\label{nonarch Julia 2}
\end{figure}

\subsection{Proof of Theorem \ref{p2}.} 
If $|c_i| \leq 1$ for at least one $i$, then 
$$E_v =  \frac12 \max \{ \log^+|c_1|, \log^+|c_2| \} = \frac12 \log^+|c_1-c_2|,$$
proving the theorem in this case.  If $1 < |c_1| < |c_2|$, then all points $z \in J_{c_2}$ satisfy $|z| = |c_2|^{1/2} > |c_1|^{1/2}$, so that $\lambda_{c_1}(z) = \log|z| = \frac12 \log|c_2|$ from \eqref{lambda value}, giving 
$$E_v =  \frac12 \max \{ \log^+|c_1|, \log^+|c_2| \} = \frac12 \log^+|c_2-c_1|.$$
Similarly for $1 < |c_2| < |c_1|$, and this completes the proof of the theorem for $|c_1| \not= |c_2|$. 

Note that whenever $1 < |c_2| = |c_1|$, we have $\lambda_{c_1}(z) \leq \frac12 \log|c_1|$ for all $z\in J_{c_2}$, from \eqref{upper bound on lambda}.  It follows that 
	$$E_v \leq \frac12 \max \{ \log^+|c_1|, \log^+|c_2| \},$$ 
proving the upper bound on $E_v$ in all cases.

For $1 < |c_1| = |c_2| \leq 16$, we have $|c_1- c_2| \leq 16$, so that $\frac{1}{16} \log|c_1-c_2| \leq \frac14 \log 2$.  This completes the proof of the first statement of the theorem in this case as well.  

For the remainder of the proof, we assume that
	$$r := |c_1| = |c_2|  > 16.$$
Exactly as in the proofs of Theorems \ref{archimedeanbounds} and \ref{not 2 bound}, we break the proof into cases, according to how close the two parameters are.  As in \S\ref{structure 2}, we let $\pm b_i$ denote the preimages of 0 by $f_{c_i}$.  

\medskip
{\bf Case 1.}  Assume that the preimages $b_1$ and $b_2$ are chosen so that 
	$$s := |b_1 - b_2| \leq |b_1 + b_2|$$
and suppose that they satisfy
	$$r^{1/2}/2 < s  \leq r^{1/2}.$$
Since $|b_2 - (-b_2)| = |b_2|/2= r^{1/2}/2$, so $|b_1 + b_2| = |b_1 - b_2 + 2b_2| = s.$  Then as
\begin{equation} \label{b product 2}
	(b_1 - b_2)(b_1+b_2) = b_1^2 - b_2^2 =  c_2 - c_1,
\end{equation}
it follows that 
	$$|c_1 - c_2| = s^2.$$
For all $z \in J_{c_2}$, we have
	$$\inf_{z_1 \in J_{c_1}} |z - z_1| = s > r^{1/2}/2,$$
so that  
	$$|f_{c_1}^n(z)| = s^{2^n}$$
for all $n\geq 1$, from Lemma \ref{scale factor}.  This gives
	$$\lambda_{c_1}(z) = \log s$$
for all $z \in J_{c_2}$. Therefore,
	$$E_v = \log s = \frac12 \log|c_1-c_2| > \frac12 \log r - \log 2 \geq \frac14 \log r$$
for every $r> 16$.

\medskip
{\bf Case 2.}  Assume that the preimages $b_1$ and $b_2$ are chosen so that 
	$$s := |b_1 - b_2| \leq |b_1 + b_2|$$
and satisfy
	$$1 < s  \leq r^{1/2}/2.$$
Then $|b_1 + b_2| = r^{1/2}/2$, so that
	$$|c_1 - c_2| = s r^{1/2}/2$$
from \eqref{b product 2}.  Choosing $\beta_i$ and $\beta'_i$, $i = 1,2$, as in \eqref{beta definition}, we have 
\begin{equation} \label{beta product 2}
(\beta_1 - \beta_2)(\beta_1+\beta_2) = \beta_1^2 - \beta_2^2 = b_1 - c_1 - (b_2 - c_2) = (b_1 - b_2) + (c_2 - c_1),
\end{equation}
and
\begin{equation} \label{beta' product 2}
(\beta'_1 - \beta'_2)(\beta'_1+\beta'_2) = (\beta'_1)^2 - (\beta'_2)^2 = (b_2 - b_1) + (c_2 - c_1).
\end{equation}
Noting that the expressions in \eqref{beta product 2} and \eqref{beta' product 2} have absolute value $s r^{1/2}/2$, we find that 
	$$|\beta_1 - \beta_2| = |\beta_1' - \beta_2'| = s.$$
It follows that 
	$$\inf_{z_1 \in J_{c_1}} |z - z_1| = s$$
for all $z \in J_{c_2}$.  Therefore
	$$\inf_{z_1 \in J_{c_1}} |f_{c_1}(z) - z_1| = sr^{1/2}/2$$
for all $z \in J_{c_2}$, so that
	$$|f_{c_1}^n(z)| = (sr^{1/2}/2)^{2^{n-1}}$$
for all $n\geq 2$ and $z \in J_{c_2}$.  This implies that
	$$\lambda_{c_1}(z) = \frac12 \log(sr^{1/2}/2)$$
for all $z \in J_{c_2}$, and 
	$$E_v = \frac12 \log(sr^{1/2}/2) = \frac12 \log|c_1-c_2|  > \frac14 \log r - \frac12 \log 2 > \frac18 \log r$$
for all $r> 16$.

\medskip
{\bf Case 3.}  Assume that the preimages $b_1$ and $b_2$ are chosen so that 
	$$1 = |b_1 - b_2| < |b_1 + b_2| = r^{1/2}/2.$$
Then 
	$$|c_1 - c_2| = r^{1/2}/2$$
from \eqref{b product 2}.  It follows that 
	$$|\beta_1 - \beta_2| = |\beta_1' - \beta_2'| = 1,$$
from \eqref{beta product 2} and \eqref{beta' product 2}.  We also have
\begin{equation} \label{mixed beta}
(\beta_1 - \beta_2')(\beta_1+\beta_2') = \beta_1^2 - (\beta_2')^2 = b_1 - c_1 - (-b_2 - c_2) = (b_1 + b_2) + (c_2 - c_1).
\end{equation}
The right-hand-side is the sum of two terms with the same absolute value and may lead to cancellation, so it could happen that $D(\beta_1, 1) = D(\beta_2', 1)$.  On the other hand, we also have 
\begin{equation} \label{mixed beta 2}
(\beta_1' - \beta_2)(\beta_1'+\beta_2) = (\beta_1')^2 - \beta_2^2 = -b_1 - c_1 - (b_2 - c_2) = -(b_1 + b_2) + (c_2 - c_1),
\end{equation}
and $|(b_1+b_2) - (-(b_1+b_2))| = |2||b_1+b_2| = r^{1/2}/4$.  In other words, the cancellation on the right-hand-sides of \eqref{mixed beta} and \eqref{mixed beta 2} cannot bring us smaller than $r^{1/2}/4$ in both equations.  Consequently, we have 
	$$|\beta_1 - \beta_2'| \mbox{ or } |\beta_1' - \beta_2|  \geq  (r^{1/2}/4) / (r^{1/2}/2) = \frac12.$$
Consequently, at least half of the Julia set $J_{c_2}$ (with respect to the measure $\mu_{c_2}$) must be at distance at least $1/2$ from the Julia set $J_{c_1}$.  Note that $r> 16$ implies that $1/2 > 2/r^{1/2}$.  So, for half of the points $z \in J_{c_2}$, we have
	$$\inf_{z_1 \in J_{c_1}} |f_{c_1}^2(z) - z_1| \geq  \frac12 \left(\frac{r^{1/2}}{2}\right)^2 = \frac{r}{8},$$
and thus
	$$\lambda_{c_1}(z) \geq \frac14 \log(r/8) = \frac14 (\log r - \log 8)$$
for these $z$ values.  We conclude that 
\begin{eqnarray*}
E_v &\geq& \frac18 \log(r/8) ~= ~ \frac14 \log|c_1-c_2| - \frac18 \log 2  \\
	&=& \frac18 \log r - \frac38 \log 2 \\
	&\geq& \frac{1}{16} \log r - \frac18 \log 2
\end{eqnarray*}
for all $r> 16$.
	
\medskip
{\bf Case 4.}  Assume that the preimages $b_1$ and $b_2$ are chosen so that 
	$$s := |b_1 - b_2| < |b_1 + b_2| = r^{1/2}/2$$
and satisfy
	$$2/r^{1/2} < s  < 1.$$
Then \eqref{b product 2} implies that
	$$1 < |c_1 - c_2| = s r^{1/2}/2 < r^{1/2}/2.$$
We have 
	$$|\beta_1 - \beta_2| = |\beta_1' - \beta_2'| = s$$
from \eqref{beta product 2} and \eqref{beta' product 2}.  We now choose $\gamma_i$ and $\gamma_i'$, $i = 1,2$, as in \eqref{gamma definition}, and these satisfy
\begin{equation} \label{gamma product}
(\gamma_1 - \gamma_2)(\gamma_1 + \gamma_2) = \gamma_1^2 - \gamma_2^2 = (\beta_1-\beta_2) + (c_2- c_1).
\end{equation}
Similarly for $\gamma_i'$.  Consequently, 
	$$|\gamma_1 - \gamma_2| = |\gamma'_1 - \gamma_2'| = s.$$
It follows that all points $z\in J_{c_2}$ are distance $s$ from $J_{c_1}$, so that 
	$$\inf_{z_1 \in J_{c_1}} |f_{c_1}^2(z) - z_1| = (r^{1/2}/2)^2 s = rs/4$$
and 
	$$|f^n_{c_1}(z)| = (rs/4)^{2^{n-2}} \mbox{ for all } n\geq 3$$
for all $z \in J_{c_2}$.  Therefore, 
	$$\lambda_{c_1}(z) = \frac14 \log(rs/4)$$
for all $z \in J_{c_2}$, so that
	$$E_v = \frac14 \log(rs/4) = \frac12 \log|c_1 - c_2| - \frac14 \log s > \frac12 \log|c_1-c_2|$$
and 
	$$E_v \geq \frac14 \log(r^{1/2}/2) \geq \frac{1}{16} \log r $$
for all $r > 16$.

\medskip
{\bf Case 5.}  Assume that the preimages $b_1$ and $b_2$ satisfy
	$$ 2/r^{1/2} = |b_1 - b_2| < |b_1 + b_2| = r^{1/2}/2.$$
Then
	$$|c_1 - c_2| = 1$$
from \eqref{b product 2}.  Equations \eqref{beta product 2} and \eqref{beta' product 2} imply that 
\begin{equation} \label{beta close}
	|\beta_1 - \beta_2| = |\beta_1'- \beta_2'| = 2/r^{1/2},
\end{equation}
and \eqref{mixed beta} and \eqref{mixed beta 2} imply that
	$$|\beta_1 - \beta_2'| = |\beta_1' - \beta_2| = 1.$$ 
To determine how the Julia sets might overlap, we examine third preimages of 0.  From \eqref{gamma product}, we know that 
	$$|\gamma_1 - \gamma_2| = |\gamma_1' - \gamma_2'| = 2/r^{1/2}.$$
But 
\begin{equation} \label{mixed gamma}
	(\gamma_1 - \gamma_2')(\gamma_1 + \gamma_2') = \gamma_1^2 - (\gamma_2')^2 = (\beta_1-\beta_2') + (c_2- c_1)
\end{equation} 
and both terms on the right-hand-size have absolute value 1.  So it can happen that $D(\gamma_1, 2/r^{1/2}) = D(\gamma_2', 2/r^{1/2})$.  Similarly for $\gamma_1'$ with $\gamma_2$.  But both pairs cannot be too close, because
	$$(\beta_1 - \beta_2') - (\beta_1' - \beta_2) = (\beta_1 - \beta_1') + (\beta_2 - \beta_2') = (\beta_1 - \beta_1') + (\beta_1 - \beta_1') + \eps$$
for some $|\eps| \leq 2/r^{1/2}$, from \eqref{beta close}.  It follows that 
	$$|(\beta_1 - \beta_2') - (\beta_1' - \beta_2)| = |2(\beta_1 - \beta_1')| = \frac12$$
so that 
	$$|\gamma_1 - \gamma_2'| \mbox{ or } |\gamma_1' - \gamma_2|  \geq  (1/2) / (r^{1/2}/2) = \frac{1}{r^{1/2}}.$$
The same estimates will hold for the third preimages of 0 near $\beta_i'$, as well as those near $- \beta_i$ and $-\beta'_i$.  Consequently, at least half of the Julia set $J_{c_2}$ (with respect to $\mu_{c_2}$) must be at distance at least $1/r^{1/2}$ from the Julia set $J_{c_1}$.  Note that $r> 16$ implies that $1/r^{1/2} > 4/r$.  So, for these points $z \in J_{c_2}$, we have
	$$\inf_{z_1 \in J_{c_1}} |f_{c_1}^3(z) - z_1| \geq \left(\frac{r^{1/2}}{2}\right)^3 \frac{1}{r^{1/2}} = \frac{r}{8},$$
and thus
	$$\lambda_{c_1}(z) \geq \frac18 \log(r/8) = \frac18 \log r - \frac18 \log 8$$
for these $z$ values.  We conclude that 
	$$E_v \geq \frac{1}{16} \log(r/8) = \frac{1}{16} \log r - \frac{3}{16} \log 2.$$

\medskip
{\bf Case 6.}  Assume that the preimages $b_1$ and $b_2$ are chosen so that 
	$$s := |b_1 - b_2| \leq |b_1 + b_2|$$
and satisfy
	$$4/r < s < 2/r^{1/2}.$$
Then 
	$$2/r^{1/2} < |c_1 - c_2| = sr^{1/2}/2 < 1$$
from \eqref{b product 2}.  We also compute 
	$$|\gamma_1 - \gamma_2| = |\beta_1 - \beta_2| = s$$
from \eqref{beta product 2} and \eqref{gamma product}.  But, for disks centered at the 3rd preimages of 0 to contain the Julia set, we need to take radius $8/r$, which may be larger than $s$.  So we pass to 4th preimages $\delta_i$ of 0, so that $f_{c_i}(\delta_i) = \gamma_i$; observe that we can choose these so that 
	$$|\delta_1 - \delta_2| = s,$$
because $(\delta_1 - \delta_2)(\delta_1 + \delta_2) = \delta_1^2 - \delta_2^2 = (\gamma_1 - \gamma_2) + (c_2 - c_1)$.  This is enough to conclude that 
	$$\inf_{z_1 \in J_{c_1}} |z - z_1| = s$$
for all $z \in J_{c_2}$.  Therefore, 
	$$\inf_{z_1 \in J_{c_1}} |f^3_{c_1}(z) - z_1| = s(r^{1/2}/2)^3 > r^{1/2}/2$$
for all $z \in J_{c_2}$, so that
	$$\lambda_{c_1}(z) = \frac18 \log(s r^{3/2}/8)$$
for all $z \in J_{c_2}$, and  
	$$E_v = \frac18 \log(s r^{3/2}/8) \geq \frac18 \log(r^{1/2}/2) = \frac{1}{16} \log r - \frac18 \log 2.$$

Finally, note that if $|b_1 - b_2| \leq 4/r$, then $|c_1 - c_2| \leq 2/r^{1/2}$, so Case 6 completes the proof of the theorem.

\subsection{An upper bound on the local height near the Julia set}
We will use the following proposition in the proof of Theorem \ref{boundingN}.  This is an analog of the estimates provided in Propositions \ref{archimedean epsilon} and  \ref{escapebound pnot2}.

\begin{prop} \label{escapebound p2} 
Suppose $v$ is a non-archimedean place of $K$ dividing 2.  
For any $0 < r < 1/4$, we have 
	$$\lambda_{c}(z) \leq r \log\max\{|c|, 16\}$$
for all $z$ within distance 
	$$\frac{1}{\max\{|c|, 16\}^{\log(1/r)}}$$
of the filled Julia set within $\C_v$.  
\end{prop}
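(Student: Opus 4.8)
The plan is to follow the strategy of Propositions~\ref{archimedean epsilon} and~\ref{escapebound pnot2}: split according to whether $|c|_v$ is large or small, and in each regime iterate a scaling estimate until the orbit of $z$ escapes past the Julia set, so that $\lambda_{c,v}(z)=2^{-n}\lambda_{c,v}(f_c^n(z))$ for a controlled number $n$ of iterations, using the functional equation $\lambda_{c,v}\circ f_c=2\,\lambda_{c,v}$ that is immediate from the definition. Since $\lambda_{c,v}$ vanishes on the filled Julia set, one may assume $z\notin K_c$.

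For the case $|c|_v>16$, where $\max\{|c|_v,16\}=|c|_v$ and $K_c=J_c$ is a Cantor set all of whose points have absolute value $|c|_v^{1/2}$, I would write $z=z_0+y_0$ with $z_0\in J_c$ and $|y_0|_v\le\delta:=|c|_v^{-\log(1/r)}$, noting $\delta<|c|_v^{-1}<1$ because $r<1/4$. Since $J_c$ is forward invariant and $|c|_v>4$, Lemma~\ref{scale factor} applies at each $f_c^k(z_0)$: while the running perturbation stays below $|c|_v^{1/2}/2$ it is rescaled by exactly $|c|_v^{1/2}/2$, so $f_c^k(z)=f_c^k(z_0)+y_k$ with $|y_k|_v=|y_0|_v\,(|c|_v^{1/2}/2)^k$. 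Letting $n$ be the least index with $|y_n|_v\ge|c|_v^{1/2}/2$ (one checks $n\ge 2$ from $\delta<1$), one has $|y_{n-1}|_v<|c|_v^{1/2}/2<|c|_v^{1/2}$, hence $|f_c^{n-1}(z)|_v=|c|_v^{1/2}$ by the ultrametric inequality, so \eqref{upper bound on lambda} gives $\lambda_{c,v}(z)=2^{-(n-1)}\lambda_{c,v}(f_c^{n-1}(z))\le 2^{-n}\log|c|_v$. Finally, $|y_n|_v\ge|c|_v^{1/2}/2$ together with $|y_0|_v\le\delta$ forces $(|c|_v^{1/2}/2)^{n-1}\ge 1/\delta=|c|_v^{\log(1/r)}$, and since $0<\log(|c|_v^{1/2}/2)\le\tfrac12\log|c|_v$ this yields $n-1\ge 2\log(1/r)$, whence $2^{-n}\le\tfrac12 r^{\log 4}<r$ (using $\log 4>1$, $r<1$), and the claimed bound follows.

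For the case $|c|_v\le 16$, where $\max\{|c|_v,16\}=16$, I would first record three elementary facts: $K_c\subseteq\overline D(0,4)$, since any $z$ with $|z|_v>4\ge|c|_v^{1/2}$ satisfies $|f_c(z)|_v=|z|_v^2$ and escapes; a crude induction gives $|f_c^m(w)|_v\le 16^{2^{m-1}}$ for $|w|_v\le 4$ and $m\ge 1$, hence $\lambda_{c,v}(w)\le\tfrac12\log 16$ there; and for $w,w'\in\overline D(0,4)$ the identity $f_c(w)-f_c(w')=(w-w')(w+w')$ with the ultrametric inequality gives $|f_c(w)-f_c(w')|_v\le 4|w-w'|_v$. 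Then for $z=z_0+y$ with $z_0\in K_c$ and $|y|_v\le\delta:=16^{-\log(1/r)}<1$, a short computation gives $|z|_v\le 4$ and even $|f_c(z)|_v\le 4$, so the least $n$ with $|f_c^n(z)|_v>4$ satisfies $n\ge 2$; the factor-$4$ estimate applied $n$ times gives $|f_c^n(z)-f_c^n(z_0)|_v\le 4^n\delta$, and since $|f_c^n(z_0)|_v\le 4<|f_c^n(z)|_v$ the ultrametric inequality forces $4<|f_c^n(z)|_v=|f_c^n(z)-f_c^n(z_0)|_v\le 4^n\delta$, so $4^{n-1}>1/\delta=16^{\log(1/r)}$, that is $n-1>2\log(1/r)$. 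As before, $\lambda_{c,v}(z)=2^{-(n-1)}\lambda_{c,v}(f_c^{n-1}(z))\le 2^{-n}\log 16<r\log 16$.

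I expect the only real bookkeeping to be in the first case: tracking $y_k$ through Lemma~\ref{scale factor}, being a little careful at the boundary $|y_k|_v=|c|_v^{1/2}/2$ between its two regimes and for small values of $n$, and converting $|y_n|_v\ge|c|_v^{1/2}/2$ into the lower bound $n\ge 1+2\log(1/r)$ that beats $r$. The second case is routine once the three facts above are in hand, and for $|c|_v\le 1$ it reduces to the observation that a neighborhood of radius $<1$ of $K_c=\overline D(0,1)$ is already contained in $K_c$.
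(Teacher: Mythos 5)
Your proof is correct, and in the main regime $|c|_v>16$ it is essentially the paper's argument: iterate Lemma \ref{scale factor} along the orbit of a nearby Julia-set point, use that the perturbation is scaled by exactly $|c|_v^{1/2}/2$ until it reaches that threshold, and convert the resulting escape time into $\lambda_{c,v}(z)\le 2^{-n}\log|c|_v\le r\log|c|_v$; the paper does the same computation, phrased with $r=2^{-n}$ rather than your inequality $n-1\ge 2\log(1/r)$ (and the boundary case of Lemma \ref{scale factor} you flag never actually arises, by minimality of $n$). The genuine difference is in the complementary range. The paper cuts at $|c|_v=4$: it runs the scaling argument for all $|c|_v>4$ (which suffices there because the required radius $\max\{|c|_v,16\}^{-\log(1/r)}$ is at most $|c|_v^{-\log(1/r)}$), and for $|c|_v\le 4$ it invokes potential good reduction, so that every point within distance $1$ of the filled Julia set already lies in it and $\lambda_{c,v}$ vanishes on the whole neighborhood in question. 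You cut at $|c|_v=16$ and treat all of $|c|_v\le 16$ by a structure-free escape-time estimate: $K_c\subset\overline{D}(0,4)$, $\lambda_{c,v}\le\tfrac12\log 16$ on that disk, and the Lipschitz bound $|f_c(w)-f_c(w')|_v\le 4\,|w-w'|_v$ there, which forces $4^{n-1}>16^{\log(1/r)}$ at the first escape past radius $4$. This avoids both the potential-good-reduction description of the Julia set and any application of Lemma \ref{scale factor} in the band $4<|c|_v\le 16$, where the expansion factor $|c|_v^{1/2}/2$ can be close to $1$; the cost is a few extra elementary estimates, and the constants you obtain are at least as strong as those in the paper.
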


\begin{proof} 
First assume that $|c| > 4$.  Recall that all points $x$ of the Julia set $J_c$ (which agrees with the filled Julia set in this setting) satisfy $|x| = |c|^{1/2}$.  From Lemma \ref{scale factor}, we know that for all $x \in J_c$ and all $z = x+y$ with $|y| < |c|^{1/2}/2$, we have 
$$|f_c(z) - f_c(x)|_v = |2x y + y^2 |_v = |y| |c|^{1/2}/2.$$
Recall also that $\lambda_c(z) = \log |z|$ for all $|z| > |c|^{1/2}$ and $\lambda_c(z) \leq \frac12 \log |c|$ for all $|z| \leq |c|^{1/2}$.  

In particular, for $|c| > 4$ and for any $n\geq 2$, a point $z$ within distance 
	$$\frac{|c|}{4} \left(\frac{2}{|c|^{1/2}}\right)^n \geq \frac{1}{|c|^{(n/2)-1}}$$ 
will satisfy 
	$$\lambda_c(z) = 2^{-n} \lambda_c(f^n(z)) \leq \frac{1}{2^n} \log |c|.$$
Fix $r \in (0, 1/4)$ and choose $n\geq 3$ so that $\frac{1}{2^n} \leq r < \frac{1}{2^{n-1}}$.  Note that 
	$$\log(1/r) > (n-1) \log 2 > \frac{n}{2} - 1$$
for all $n\geq 3$.  So if $z$ is within distance $1/|c|^{\log(1/r)}$ of the Julia set, then 
	$$\lambda_c(z) \leq \frac{1}{2^n} \log|c| \leq r \log |c| \leq r \log \max\{|c|, 16\}.$$

Now assume $|c|_v \leq 4$.  Then $f_c$ has potential good reduction with $J_c = \zeta_{x,1}$, where $x$ is any element of the filled Julia set.  Consequently, all points $z$ within distance 1 of the filled Julia set are in the filled Julia set and thus satisfy $\lambda_c(z) =0$.  
\end{proof}

%%%%%%
%%%%%%
\bigskip
\section{Bounds on the energy pairing} \label{pairingboundsection}

In this section, we use the estimates of the previous sections to prove a weak version of Theorem \ref{pairingbound}, and we use it to deduce Theorem \ref{uniformbound}.  We let $h(x)$ denote the logarithmic Weil height of $x \in \Qbar$ and $h(x_1, x_2)$ the Weil height on $\A^2(\Qbar)$.  

\begin{theorem} \label{weak pairingbound}
We have 
	$$\frac{1}{16} h(c_1-c_2)  - \frac23 ~\leq~ \<f_{c_1}, f_{c_2} \> ~\leq~  \frac12 h(c_1, c_2) + \frac73$$
for all $c_1\not=c_2$ in $\Qbar$.
\end{theorem}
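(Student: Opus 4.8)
The plan is to obtain Theorem \ref{weak pairingbound} by summing the local estimates established in Sections \ref{archpairingbounds}, \ref{not2p}, and \ref{2p} over all places of a number field, using the product formula for the Weil height. Fix a number field $K$ containing both $c_1$ and $c_2$; the pairing is independent of this choice, and by its defining formula \eqref{energy pairing} it decomposes as
$$\<f_{c_1},f_{c_2}\> = \sum_{v\in M_K} \frac{[K_v:\Q_v]}{[K:\Q]}\, E_v,$$
where $E_v = \int \lambda_{c_1,v}\,d\mu_{c_2,v}$ is the local energy at $v$. I will combine this with the two standard identities
$$h(c_1-c_2) = \sum_{v\in M_K}\frac{[K_v:\Q_v]}{[K:\Q]}\log^+|c_1-c_2|_v, \qquad h(c_1,c_2) = \sum_{v\in M_K}\frac{[K_v:\Q_v]}{[K:\Q]}\log^+\max\{|c_1|_v,|c_2|_v\},$$
and with the fact that $\sum_{v\mid\infty}[K_v:\Q_v]/[K:\Q] = 1$.

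For the upper bound, I would note that each of Theorems \ref{archimedeanbounds}, \ref{not 2 bound}, and \ref{p2} gives $E_v \leq \tfrac12\log^+\max\{|c_1|_v,|c_2|_v\}$ at the corresponding place, the only additive correction being the term $C' = \log 8$ appearing at the archimedean places (Remark \ref{arch constants}). Multiplying by $[K_v:\Q_v]/[K:\Q]$, summing over $v\in M_K$, and invoking the displayed identities then yields
$$\<f_{c_1},f_{c_2}\> \;\leq\; \tfrac12\, h(c_1,c_2) + \log 8 \;\leq\; \tfrac12\, h(c_1,c_2) + \tfrac52.$$

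For the lower bound, I would observe that across the three families of estimates the weakest lower bound on $E_v$ is $\tfrac1{16}\log^+|c_1-c_2|_v$ — Theorem \ref{not 2 bound} in fact supplies the larger constant $\tfrac14$, which only helps — and that at an archimedean place this is diminished by the additive constant $C = \tfrac1{16}\log 2000$ of Remark \ref{arch constants}. Summing these inequalities with weights and using $\sum_{v\mid\infty}[K_v:\Q_v]/[K:\Q] = 1$ once more gives
$$\<f_{c_1},f_{c_2}\> \;\geq\; \tfrac1{16}\, h(c_1-c_2) - \tfrac{\log 2000}{16}.$$

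All the substance of the argument is contained in the local estimates already proved, so the present step is essentially bookkeeping. The one point to keep in mind is that the absolute values $|\cdot|_v$ in those local statements must be normalized so that the weighted sum over $M_K$ reproduces the Weil height — which is precisely the normalization in force throughout — so that no hidden constant enters in passing from local to global. I therefore do not expect a genuine obstacle here; the real difficulties were the case analyses in Sections \ref{archpairingbounds}–\ref{2p}, and the sharper lower bound of Theorem \ref{pairingbound} in terms of $h(c_1,c_2)$ rather than $h(c_1-c_2)$, which is deferred to Section \ref{stronglowerbound}.
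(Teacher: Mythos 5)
Your proposal is correct and follows essentially the same route as the paper: the paper's proof likewise just sums the local bounds of Theorems \ref{archimedeanbounds}, \ref{not 2 bound}, and \ref{p2} over all places with weights $[K_v:\Q_v]/[K:\Q]$, with the additive constants $\tfrac1{16}\log 2000$ and $\log 8<\tfrac52$ coming only from the archimedean places as in Remark \ref{arch constants}. (Minor quibble: the displayed identities are the definition of the Weil height rather than the product formula, but this does not affect the argument.)
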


\subsection{Proof of Theorem \ref{weak pairingbound}} \label{weaklowerbound}
Fix $c_1 \not= c_2$ in $\Qbar$, and let $K$ be any number field containing them.  Summing over all places of $K$, we have by Theorem \ref{archimedeanbounds}, Theorem \ref{not 2 bound}, and Theorem \ref{p2} that 
$$\frac{1}{16} \sum_{v \in M_K} \frac{[K_v: \Q_v]}{[K:\mathbb{Q}]} \log^+|c_1 - c_2|   - \frac{1}{16}\log 2000
- \frac{1}{4} \log 2 \quad  \leq  \quad \< f_{c_1}, f_{c_2} \>  $$
 $$\qquad\qquad \leq \frac{1}{2} \sum_{v \in M_K} \frac{[K_v: \Q_v]}{[K:\mathbb{Q}]} \log^+ \max \{ |c_1|_v, |c_2|_v \} + \log 8,$$
where the added constants come from the archimedean places (Remark \ref{arch constants}) and the prime 2.  This completes the proof of the theorem.

\subsection{Proof of Theorem \ref{uniformbound}} \label{proof of delta}
We will assume towards contradiction that there is a sequence of triples $c_{1,n} \not= c_{2,n}\in  \Qbar$ and $\eps_n>0$ such that 
   $$\<f_{c_{1,n}}, f_{c_{2,n}}\><\eps_n,$$
where $\eps_n\to 0$ as $n$ tends to infinity.  Let $K_n$ be a number field containing $c_{1,n}$ and $c_{2,n}$.  We will show that this forces the pairing at a (proportionally) large number of archimedean places of $K_n$ to be close to 0; as a consequence we will deduce that the height $h(c_{1,n}- c_{2,n})$ must get large.  This in turn would contradict Theorem \ref{weak pairingbound}. 

Let $M_n^\infty$ denote the set of all archimedean places of $K_n$.  For each $v \in M_n^\infty$, we let 
	$$E_v(c_{1,n}, c_{2,n}) = \int \lambda_{c_1,v} \, d\mu_{c_2, v}$$
denote the local contribution to the energy pairing.  We let $S_n\subset \M^\infty_{n}$ be the set of archimedean places with
  $$E_v(c_{1,n}, c_{2,n})< 2\eps_n.$$
Since $\sum_{v\in \M^\infty_{n}} [K_{n,v}:\Q_v]=[K_n: \Q]$ and $\<f_{c_{1,n}}, f_{c_{2,n}}\> < \eps_n$, we see that $\sum_{v\in \M^\infty_{n}\setminus S_n} [K_{n,v}:\Q_v] <   [K_n: \Q]/2$.  Therefore, 
  $$\sum_{v\in S_n} [K_{n,v}:\Q_v] \geq  \frac{[K_n: \Q]}{2}.$$
Take $L=1000$ as in Remark \ref{arch constants}, and choose any $M > L$.  

Recall that, for a fixed archimedean place $v|\infty$, we have $\mu_{c_1} = \mu_{c_2}$ if and only if $c_1=c_2$ from \eqref{known equivalence}, so that $E_\infty(c_1, c_2) > 0$ for all $c_1\not= c_2 \in \C = \C$.  Moreover, $E_\infty$ is continuous as a function of $(c_1, c_2)$ because of the continuity of $\lambda_c(z)$ in $c$ and $z$ and the (weak) continuity of the measures $\mu_c = \frac{1}{2\pi} \Delta \lambda_c$.  Therefore, for any $\delta>0$, $E_\infty(c_1,c_2)$ obtains a positive minimum on the compact set where $|c_1-c_2| \geq  \delta$ and $|c_1|, |c_2| \leq M$ for $c_1, c_2\in \C$.  It follows that there is a sequence $\delta_n\to 0^+$ as $n\to \infty$ such that 
  	$$E_\infty(c_{1}, c_{2})\geq 2\eps_n $$
for all $|c_1-c_2| \geq  \delta_n$ and $|c_1|, |c_2| \leq M$ for $c_1, c_2\in \C$.  Furthermore, if one of the $c_i$, say $c_1$, has absolute value bigger than $M$ and if $|c_1-c_2| >3/|c_1|^{1/2}$, then
  	$$E_\infty(c_1, c_2)\geq \frac{1}{64} \log |c_1| \geq \frac{1}{64}\log M$$
from Theorem \ref{archimedeanbounds}. 

For all $n$ sufficiently large, we have $2\eps_n< \frac{1}{64}\log M$, and so for any $v\in S_n$, as $E_v(c_{1,n}, c_{2,n})< 2\eps_n$, we must have
  $$|c_{1,n} -c_{2,n}|_v\leq \max\left\{ \delta_n, \frac{3}{M^{1/2}}\right\}.$$
Hence for any $n$ large enough that $2\eps_n< \frac{1}{64}\log M$ and $\delta_n<3/M^{1/2}$, we conclude that 
  $$|c_{1,n} -c_{2,n}|_v\leq 3/M^{1/2} <1$$
for all $v\in S_n$.  Consequently, 
\begin{eqnarray*}
h(c_{1, n}-c_{2,n})
&\geq& \sum_{v\in \M_{K_n}\backslash S_n}\frac{ [K_{n,v}:\Q_v]}{[K_n:\Q]} \log^+ |c_{1,n}-c_{2,n}|_v\\
&\geq& \sum_{v\in \M_{K_n}\backslash S_n}\frac{ [K_{n,v}:\Q_v]}{[K_n:\Q]} \log |c_{1,n}-c_{2,n}|_v\\
  &=&  \sum_{v\in  S_n} \frac{ [K_{n,v}:\Q_v]}{[K_n:\Q]}  \log\frac{1}{ |c_{1,n}-c_{2,n}|_v}\\
  &\geq&  \left(\sum_{v\in  S_n}\frac{ [K_{n,v}:\Q_v]}{[K_n:\Q]}  \right)\log\frac{M^{1/2}}{3}  \; \geq \; \frac{1}{2}\log\frac{M^{1/2}}{3}.
\end{eqnarray*}  
We thus have by Theorem \ref{weak pairingbound} that
$$\frac{1}{32} \log \frac{M^{1/2}}{3} - \frac23 \leq \<f_{c_{1,n}}, f_{c_{2,n}}\> < \epsilon_n,$$
for any choice of $M > L$ and all sufficiently large $n$.  But this is a clearly a contradiction for $M$ and $n$ large enough.
\qed

%%%%%%
\bigskip
\section{Strong lower bound on the energy pairing} \label{stronglowerbound}

Throughout this section, we assume that $c_1$ and $c_2$ are distinct elements of $\Qbar$.  We prove Theorem \ref{pairingbound}, which gives bounds on the energy pairing $\<f_{c_1}, f_{c_2}\>$ in terms of the heights of the parameters.

The upper bound in Theorem \ref{pairingbound} is easy and was stated as part of Theorem \ref{weak pairingbound}.  The lower bound is a balancing act between ``helpful" primes and the other primes of a given number field $K$ containing the pair $c_1$ and $c_2$.  A place $v$ of $K$ will be helpful if at least one absolute value $|c_i|_v$ is large and the two parameters are not too close in the $v$-adic distance.  In this good setting, we can apply the stronger lower bounds on the local energy pairing, as in the second statement of Theorem \ref{archimedeanbounds}.  By showing that a significant proportion of primes are helpful, we obtain the lower bound of Theorem \ref{pairingbound}.

\subsection{An auxiliary height} \label{h_L}
Fix some constant $L>1$ and consider the following function $h_L$ on $\A^2(\Qbar)$.  For $c_1, c_2$ in a number field $K$, we put 
	$$r_v = [K_v:\Q_v]/[K:\Q],$$ 
and set
	$$\ell_v = \left\{ \begin{array}{ll}  
		\log\max\{|c_1|_v, |c_2|_v, L\} & \mbox{for } v \mbox{ archimedean} \\
		\log\max\{|c_1|_v, |c_2|_v, 16\} & \mbox{for } v | 2 \\
		\log\max\{|c_1|_v, |c_2|_v, 1\} & \mbox{otherwise} 
		\end{array} \right.$$
and define
	$$h_L(c_1, c_2) := \sum_{v\in M_K} r_v \ell_v.$$
Note that
	$$h(c_1, c_2) \leq h_L(c_1, c_2) \leq h(c_1, c_2) + \log L + \log 16,$$
where $h(c_1, c_2)$ is the usual logarithmic Weil height on $\A^2(\Qbar)$.

\subsection{Helpful places}   \label{helpful}	
With $L>1$ fixed, and elements $c_1$ and $c_2$ in the number field $K$, we say that the quantity $\ell_v$ is large if 
	$$\ell_v  > \left\{ \begin{array}{ll}  
		\log L & \mbox{for } v \mbox{ archimedean} \\
		\log 16 & \mbox{for } v \mid 2 \\
		0 & \mbox{otherwise.} \end{array} \right.$$
We define $\Mgood$ to be the subset of $M_K$ for which $\ell_v$ is large and
	$$|c_1 - c_2|_v >  \kappa_v e^{-\ell_v/2},$$
where 
	$$\kappa_v = \left\{ \begin{array}{ll}
		3 & \mbox{for } v \mbox{ archimedean} \\
		2 & \mbox{for } v \mid 2 \\
		1 & \mbox{otherwise} \end{array} \right.$$
and we call these places ``helpful".   We define $\Mbad$ to be the subset of $M_K$ for which $\ell_v$ is large and 
	$$|c_1 - c_2| \leq  \kappa_v e^{-\ell_v/2}$$
and call these places ``close".  We will say that a place $v$ is in $\Mbounded$ if $\ell_v$ fails to be large.

The helpful places constitute a significant portion of the contribution to the height:

\begin{lemma}  \label{most are helpful}
For any $c_1, c_2 \in \Qbar$ and any $L\geq 1$, we have 
	$$\sum_{v \in M_K\setminus \Mbad} r_v \ell_v \; \geq \; \frac{1}{3} h_L(c_1, c_2) - \frac{2}{3} \log 6.$$
and
	$$\sum_{v \in \Mgood} r_v \ell_v \; \geq\;  \frac{1}{3}h_L(c_1, c_2) - \log(16 \cdot 6^{2/3}\cdot L)$$
for any $c_1, c_2 \in \Qbar$ and any $L\geq 1$.
\end{lemma}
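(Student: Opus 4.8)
The plan is to estimate, for each place $v$, how much the defining max-term for $\ell_v$ in $h_L$ can exceed the contribution ``allowed'' by the close places, and to show that whenever $v$ is close the value $\ell_v$ is already controlled by the (small) $v$-adic distance $|c_1-c_2|_v$. Concretely, for a close place $v$ we have $\ell_v$ large and $|c_1-c_2|_v \le \kappa_v e^{-\ell_v/2}$, which rearranges to $\ell_v \le 2\log\kappa_v - 2\log|c_1-c_2|_v$. Summing $r_v\ell_v$ over $v\in\Mbad$ therefore gives
$$
\sum_{v\in\Mbad} r_v\ell_v \;\le\; 2\sum_{v\in\Mbad} r_v\log\kappa_v \;-\;2\sum_{v\in\Mbad} r_v\log|c_1-c_2|_v .
$$
The first sum on the right is bounded by $2\log 6$ (since $\kappa_v\le 3$ at archimedean places, $\kappa_v = 2$ at $v\mid 2$, $\kappa_v=1$ otherwise, and $\sum r_v$ restricted to archimedean places plus $2$-adic places is at most... — more simply, one bounds $\kappa_v\le 6$ for every $v$ and $\sum_{v}r_v=1$, giving $\le 2\log 6$). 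The real point is to handle the second sum.

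For the second sum I would invoke the product formula. Since $\sum_{v\in M_K} r_v\log|c_1-c_2|_v = 0$, and since $\log|c_1-c_2|_v \le 0$ is impossible to use directly, I instead split: the places $v\notin\Mbad$ with $|c_1-c_2|_v$ small contribute a nonpositive amount to $h(c_1-c_2)=\sum_v r_v\log^+|c_1-c_2|_v$, whereas $-\sum_{v\in\Mbad} r_v\log|c_1-c_2|_v \le \sum_{v: |c_1-c_2|_v<1} r_v(-\log|c_1-c_2|_v) = \sum_{v: |c_1-c_2|_v\ge 1} r_v\log|c_1-c_2|_v = h(c_1-c_2) \le h(c_1,c_2)+\log 2$ (the last step by $|c_1-c_2|_v\le\max\{|c_1|_v,|c_2|_v\}\cdot\max\{1,|2|_v^{-1}\}$ or the standard $h(c_1-c_2)\le h(c_1)+h(c_2)+\log 2\le 2h(c_1,c_2)+\log 2$). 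This is the step I expect to be the main obstacle: getting the constant to come out as cleanly as $\tfrac13 h_L - \tfrac23\log 6$ requires being a little careful about which crude height inequality to use, and about the interplay between $h_L$ and $h$. It may instead be cleanest to bound $\sum_{v\in\Mbad} r_v\ell_v$ by something like $\tfrac12 h_L(c_1,c_2) + (\text{const})$ directly, using that on $\Mbad$ we have both $e^{\ell_v/2}\le \kappa_v/|c_1-c_2|_v$ and $\ell_v \le \log\max\{|c_1|_v,|c_2|_v,\ast\}$, then combining.

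Once $\sum_{v\in\Mbad} r_v\ell_v$ is bounded above by $\tfrac23 h_L(c_1,c_2) + \tfrac23\log 6$, subtracting from $h_L(c_1,c_2)=\sum_{v\in M_K} r_v\ell_v$ gives
$$
\sum_{v\in M_K\setminus\Mbad} r_v\ell_v \;\ge\; \frac13 h_L(c_1,c_2) - \frac23\log 6,
$$
which is the first displayed inequality of the lemma. For the second, I restrict the left-hand sum from $M_K\setminus\Mbad$ to $\Mgood$ by discarding the places in $\Mbounded$ (where $\ell_v$ is not large): the discarded terms sum to at most $\log L + \log 16$ (at most $\log L$ from archimedean places where $\ell_v\le\log L$, at most $\log 16$ from $v\mid 2$, and $0$ elsewhere, again using $\sum r_v = 1$ place-by-place). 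Subtracting this loss and absorbing constants yields
$$
\sum_{v\in\Mgood} r_v\ell_v \;\ge\; \frac13 h_L(c_1,c_2) - \log\bigl(16\cdot 6^{2/3}\cdot L\bigr),
$$
completing the proof. The only genuinely delicate point throughout is tracking the additive constants so that they collapse into the stated $6^{2/3}$ and $\log 6$; the structural argument — close places have $\ell_v$ controlled by $-\log|c_1-c_2|_v$, and the product formula converts that into a Weil-height bound — is routine.
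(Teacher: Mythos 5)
Your skeleton --- product formula plus the defining inequality $\ell_v \le 2\log\kappa_v - 2\log|c_1-c_2|_v$ at close places, then discarding $\Mbounded$ at a cost of $\log L + \log 16$ to pass from the first inequality to the second --- is the right one, and the final bookkeeping in your last paragraph is fine. (A side remark: your parenthetical justification ``$\kappa_v\le 6$ for every $v$ and $\sum_v r_v=1$'' is wrong as stated, since $\sum_{v\in M_K} r_v$ is not $1$; the bound $2\sum_{v\in\Mbad} r_v\log\kappa_v\le 2\log 6$ is nevertheless correct because $\kappa_v=1$ away from the archimedean and $2$-adic places and $\sum_{v\mid\infty}r_v=\sum_{v\mid 2}r_v=1$.) The genuine gap is in the central estimate. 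After transferring $-\sum_{v\in\Mbad} r_v\log|c_1-c_2|_v$ through the product formula, you bound it by $h(c_1-c_2)$ and then by $2h(c_1,c_2)+\log 2$, which yields only $\sum_{v\in\Mbad} r_v\ell_v \le 4\,h(c_1,c_2)+2\log 6+2\log 2$. Since $h(c_1,c_2)\le h_L(c_1,c_2)\le h(c_1,c_2)+\log(16L)$, a coefficient of $4$ is useless: to conclude you need $\sum_{v\in\Mbad} r_v\ell_v \le \tfrac23 h_L(c_1,c_2)+\tfrac23\log 6$, i.e.\ a coefficient strictly below $1$, and no crude comparison of $h(c_1-c_2)$ with $h(c_1,c_2)$ can produce that. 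You flag this yourself as ``the main obstacle,'' and the alternative you sketch (bounding $\sum_{v\in\Mbad} r_v\ell_v$ by $\tfrac12 h_L+\mathrm{const}$ ``directly'') is neither carried out nor supplied with the extra input that would make it work, so the first displayed inequality of the lemma is not established.

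The missing idea is to keep the product-formula transfer \emph{local}: at every $v\notin\Mbad$ one has $|c_1-c_2|_v\le \kappa_v e^{\ell_v}$ (indeed $|c_1-c_2|_v\le 2\max\{|c_1|_v,|c_2|_v\}\le 2e^{\ell_v}$ at archimedean $v$, and $|c_1-c_2|_v\le\max\{|c_1|_v,|c_2|_v\}\le e^{\ell_v}$ at non-archimedean $v$), so the positive contribution of $\log|c_1-c_2|_v$ should be bounded by $\sum_{v\notin\Mbad}r_v\ell_v$ plus a bounded constant, not by the global height $h(c_1-c_2)$. Writing $1=\prod_v|c_1-c_2|_v^{r_v}$, using $|c_1-c_2|_v\le\kappa_v e^{-\ell_v/2}$ on $\Mbad$ and $|c_1-c_2|_v\le\kappa_v e^{\ell_v}$ elsewhere, and taking logarithms gives $\tfrac12\sum_{v\in\Mbad}r_v\ell_v\le\sum_{v\in M_K\setminus\Mbad}r_v\ell_v+\log 6$; adding $\tfrac12\sum_{v\in M_K\setminus\Mbad}r_v\ell_v$ to both sides yields $\tfrac12 h_L(c_1,c_2)\le\tfrac32\sum_{v\in M_K\setminus\Mbad}r_v\ell_v+\log 6$, which is the first statement, and your treatment of $\Mbounded$ then gives the second. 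This is exactly how the paper argues: the non-close places are controlled place-by-place by $\ell_v$ itself, which is what makes the coefficient $\tfrac13$ come out, whereas your detour through $h(c_1-c_2)\le 2h(c_1,c_2)+\log 2$ irretrievably loses that factor.
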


\begin{proof}
We use the product formula on $c_1 - c_2$, so that 
	$$1 = \prod_v |c_1 - c_2|_v^{r_v}.$$
At the close places, we know that $|c_1 - c_2|$ is bounded from above by $\kappa_v e^{-\ell_v/2}$.  At all other places, we have $|c_1 - c_2|_v \leq e^{\ell_v}$ if non-archimedean, and $|c_1 - c_2|_v \leq 2 e^{\ell_v} \leq \kappa_v e^{\ell_v}$ if archimedean.  Therefore, we have 
\begin{eqnarray*}
1 &\leq& \prod_{v \in \Mbad} (\kappa_v e^{-\ell_v/2})^{r_v}  \prod_{v \in M_\infty \setminus \Mbad} (\kappa_v e^{\ell_v})^{r_v}  \prod_{v \in M_K \setminus (M_\infty \cup \Mbad)} (e^{\ell_v})^{r_v} \\
&\leq &  6 \prod_{v \in \Mbad} (e^{-\ell_v/2})^{r_v} \prod_{v \in M_K \setminus \Mbad} (e^{\ell_v})^{r_v}.
\end{eqnarray*}
Taking logarithms gives 
\begin{equation} \label{bad on left}
	\frac12 \sum_{v \in \Mbad} r_v \ell_v  \; \leq  \sum_{v \in M_K\setminus \Mbad} r_v \ell_v + \log 6.
\end{equation}
Adding $\frac12 \sum_{v \in M_K\setminus \Mbad} r_v \ell_v$ to both sides yields 
	$$\frac12 h_L(c_1, c_2)  \; \leq \frac{3}{2} \sum_{v \in M_K\setminus \Mbad} r_v \ell_v  + \log 6,$$
proving the first statement of the lemma.

Expanding the right-hand-side of \eqref{bad on left}, we see that 
$$\frac12 \sum_{v \in \Mbad} r_v \ell_v \leq \sum_{v \in \Mgood} r_v \ell_v + \sum_{v \in \Mbounded} r_v \ell_v + \log 6$$
so that 
$$\sum_{v \in \Mgood} r_v \ell_v \geq   \frac12 \sum_{v \in \Mbad} r_v \ell_v - \sum_{v \in \Mbounded} r_v \ell_v - \log 6.$$
Adding $\frac12 \sum_{v \in \Mgood} r_v \ell_v$ to both sides, we obtain
\begin{eqnarray*}
\frac{3}{2} \sum_{v \in \Mgood} r_v \ell_v 
&\geq& \frac12 h_L(c_1, c_2) - \frac{3}{2} \sum_{v \in \Mbounded} r_v \ell_v - \log 6 \\
&\geq& \frac12 h_L(c_1, c_2) - \frac{3}{2} (\log L + \log 16) - \log 6 \\
&= & \frac12 h_L(c_1, c_2) - \frac{3}{2} \log (16\cdot 6^{2/3} \cdot L),
\end{eqnarray*}
which proves the lemma.
\end{proof}

\subsection{Proof of Theorem \ref{pairingbound}} \label{pairingboundproof} Fix $c_1, c_2$ and choose any number field $K$ containing both.   Fix any $L\geq 1000$ so that Theorem \ref{archimedeanbounds} is satisfied.  Decompose $M_K$ into $\Mgood \cup \Mbad \cup \Mbounded$ as in \S\ref{helpful}.  Note that $\frac{1}{16} \log r - \frac{3}{16} \log 2 \geq \frac{1}{64} \log r$ for any $r\geq 16$.  Then Theorems \ref{not 2 bound},  \ref{p2}, and  \ref{archimedeanbounds} applied in the helpful places combine to say 
\begin{eqnarray} \label{lower bound with L}
\<f_{c_1}, f_{c_2} \> &=& \sum_{v \in M_K} r_v E_v \; \geq \sum_{v \in \Mgood} r_v E_v \nonumber \\
	&\geq& \; \frac{1}{64} \sum_{v \in \Mgood} r_v \log\max\{|c_1|_v, |c_2|_v\}  \nonumber \\
	&=& \; \frac{1}{64}\sum_{v \in \Mgood} r_v \ell_v.
\end{eqnarray}
Combined with Lemma \ref{most are helpful}, this proves that for all $c_1$ and $c_2$ in $\Qbar$, we have 
\begin{eqnarray*}
\<f_{c_1}, f_{c_2} \> 
&\geq&    \frac{1}{3\cdot 64}h_L(c_1, c_2) - \frac{1}{64}\log(16 \cdot 6^{2/3}\cdot L) \\
&\geq&   \frac{1}{192}h(c_1, c_2) - \frac{1}{64}\log(16 \cdot 6^{2/3}\cdot L) . 
\end{eqnarray*}
This proves the lower bound of the theorem with $\alpha_1 = 1/192$ and $C_1 = \frac{1}{64}\log(16 \cdot 6^{2/3}\cdot L)  < 0.17 < \frac{3}{17}$ for $L = 1000$.  The upper bound of the theorem was proved already as Theorem \ref{weak pairingbound} with $\alpha_2 = 1/2$ and $C_2 = 7/3$.

%%%%%%
\bigskip
\section{Quantitative equidistribution} \label{QE}

Our goal in this section is to prove Theorem \ref{boundingN}, providing an upper bound on the energy pairing $\<f_{c_1}, f_{c_2}\>$, in terms of the number of common preperiodic points, for $c_1 \not= c_2$ in $\Qbar$, assuming $f_{c_1}$ and $f_{c_2}$ share at least one preperiodic point other than $\infty$.   We build upon the ideas developed in the proof of \cite[Theorem 3]{FRL:equidistribution} and \cite[Theorem 4]{Fili:energy}.

\subsection{Adelic measures and heights on $\P^1(\Qbar)$}
Following Favre and Rivera-Letelier \cite{FRL:equidistribution}, we define the mutual energy of measures $\rho$ and $\rho'$ on $\P^1(\C)$ by
	$$(\rho, \rho') := - \iint_{\C\times\C\setminus \Diag} \log|z-w| \,d\rho(z) \,d\rho'(w),$$
where $\Diag$ is the diagonal, assuming $\log|z-w|$ is in $L^1(\rho\otimes\rho')$.  If the measures have total mass 0 with continuous potentials on $\P^1$, 
we have $(\rho, \rho) \geq 0$ with equality if and only if $\rho = 0$.  Similarly, one defines
\begin{equation} \label{local energy}
(\rho, \rho')_v :=  - \iint_{{\bf A}^1_v \times {\bf A}^1_v \setminus \Diag} \delta_v(z,w) \,d\rho(z) \,d\rho'(w)
\end{equation}
on the Berkovich line over $\C_v$, with respect to a non-archimedean valuation, where $\delta_v(z,w)$ is the logarithm of the Hsia kernel in place of $\log|z-w|_v$.  See \cite[Proposition 4.1]{BRbook} and further information throughout Chapters 4 and 5 of \cite{BRbook}.

Now let $K$ be a number field.  An adelic measure is a collection $\mu = \{\mu_v\}_{v \in M_K}$ of probability measures on the Berkovich ${\bf P}^{1,an}_v$, with continuous potentials at all places $v$ and  for which all but finitely many are trivial (meaning that they are supported at the Gauss point).  For any adelic measure $\mu$, a height function is defined on $\P^1(\Qbar)$ by 
	$$h_\mu(F) := \frac12 \sum_{v \in M_K} \frac{[K_v:\Q_v]}{[K:\Q]} \, ([F] - \mu_v, [F] - \mu_v)_v,$$
where $F$ is any finite, $\Gal(\Kbar/K)$-invariant subset of $\Kbar$, and $[F]$ is the probability measure supported equally on the elements of $F$.  We put
$$h_\mu(\infty) :=  \frac12 \sum_v \frac{[K_v:\Q_v]}{[K:\Q]} (\mu_v, \mu_v)_v.$$

The equidistribution theorems of \cite{FRL:equidistribution, Baker:Rumely:equidistribution, ChambertLoir:equidistribution} state that if $F_n$ is a seqence of $\Gal(\Kbar/K)$-invariant finite sets with $h_\mu(F_n) \to 0$ and $|F_n| \to \infty$ as $n\to \infty$, the discrete probability measures 
	$$\mu_n := \frac{1}{|F_n|} \sum_{x \in F_n} \delta_x$$
converge weakly to the measure $\mu_v$ on ${\bf P}^{1,an}_v$  at each place $v$ of $K$.

There is a pairing between any two such heights, $h_\mu$ and $h_\nu$, associated to adelic measures $\mu$ and $\nu$, as 
\begin{equation} \label{height pairing}
\<h_\mu, h_\nu\> =  \frac{1}{2} \sum_{v \in \M_K}  \frac{[K_v:\Q_v]}{[K:\Q]} ( \mu_v - \nu_v, \mu_v - \nu_v )_v.
\end{equation}
It satisfies $\<h_\mu, h_\nu\> = 0 \iff h_\mu = h_\nu \iff \mu = \nu$.  The energy pairing \eqref{energy pairing} between two quadratic polynomials is a special case, taking the dynamical canonical heights $\hat{h}_{c_1}$ and $\hat{h}_{c_2}$ associated to their adelic equilibrium measures.  

\begin{remark} 
The height $h_\mu$ is defined for an arbitrary adelic measure, but small sequences (meaning the sequences $\{F_n\}$ of Galois-invariant sets with $h_\mu(F_n) \to 0$ and $|F_n| \to \infty$) do not always exist.  
\end{remark}

\subsection{Height pairing as a distance}
Following \cite{Fili:energy}, we consider a distance between two adelic measures $\mu = \{\mu_v\}$ and $\nu = \{\nu_v\}$ on $\P^1$ over a number field $K$, defined by  
   $$d(\mu, \nu) := \<h_\mu, h_\nu\>^{1/2},$$
where $\<h_\mu, h_\nu\>$ was defined in \eqref{height pairing}; see \cite[Theorem 1]{Fili:energy}.

Suppose that $c_1$ and $c_2$ are elements of a number field $K$.  Let $\mu_1:=\{\mu_{c_1, v}\}_{v \in M_K}$ and $\mu_2:=\{\mu_{c_2,v}\}_{v \in M_K}$ be the equilibrium measures of $f_{c_1}$ and $f_{c_2}$, respectively.  Let $F$ be any finite, nonempty, $\Gal(\Kbar/K)$-invariant subset of $\P^1(\Qbar)$.  Let $[F]$ denote the probability measure supported equally on the elements of $F$.  For each place $v$ of $K$, choose a positive real $\eps_v>0$, with $\eps_v = 1$ for all but finitely many $v$.   The collection $\eps := \{\eps_v\}_{v \in M_K}$ will be called an adelic radius.  As in \cite{FRL:equidistribution}, we consider the adelic measure $[F]_\eps$, defined as a regularization of the probability measure $[F]$:  it is supported on the circles of radius $\eps_v$ about each point of $F$.  At a non-archimedean place, this means the Type II or III point associated to the disk of radius $\eps_v$.  The triangle inequality implies that
\begin{equation} \label{triangle distance}
\< f_{c_1}, f_{c_2} \>^{1/2} = d(\mu_1, \mu_2) \leq d(\mu_1, [F]_{\eps}) + d(\mu_2, [F]_{\eps})
\end{equation}
for any choices of $F$ and $\eps$.  

It is worth noting that, if the radius $\eps_v \to 0$ at some place, then the right-hand-side of \eqref{triangle distance} will tend to $\infty$.  This is because the potential of the measure $[F]_\eps$ at $v$ will blow up near the points of $F$.  On the other hand, for $\eps_v$ too large, the measure $[F]_\eps$ is not a good approximation of $[F]$.  Thus, for \eqref{triangle distance} to be useful in our proof of Theorem \ref{preperbound}, we will need to choose $\eps$ well.  This general strategy also appears in the proofs of \cite[Theorem 3]{FRL:equidistribution} and in \cite[Proposition 13]{Fili:energy}.  In our case, the choice of $\eps = \{\eps_v\}_{v \in M_K}$ will be governed by Proposition \ref{archimedean epsilon} and its non-archimedean counterparts, and this leads to Theorem \ref{boundingN}.

\begin{lemma} \label{inequalityallplaces} 
Let $K$ be a number field and fix $c_1 \not= c_2$ in $K$.  We have 
$$ \< f_{c_1}, f_{c_2} \>^{1/2} \leq \sum_{i = 1}^2 \left( \sum_{v \in M_K}  \frac{[K_v:\Q_v]}{[K:\Q]}  \left( - ( \mu_i, [F]_{\eps} )_v +\frac{\log (1/\eps_v)}{2 |F|} \right)\right)^{1/2}$$
for any choice of finite, non-empty, $\Gal(\Kbar/K)$-invariant subset $F$ of $\Qbar$ and any adelic radius $\eps = \{\eps_v\}_{v \in M_K}$.
\end{lemma}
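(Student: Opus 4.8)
The plan is to estimate separately the two distances $d(\mu_1,[F]_\eps)$ and $d(\mu_2,[F]_\eps)$ appearing on the right of the triangle inequality \eqref{triangle distance}, and then add. Fix $i\in\{1,2\}$ and abbreviate $r_v=[K_v:\Q_v]/[K:\Q]$. By the definition of $d$ and the pairing \eqref{height pairing},
$$d(\mu_i,[F]_\eps)^2 \;=\; \frac12\sum_{v\in M_K} r_v\,\big(\mu_{i,v}-[F]_{\eps,v},\,\mu_{i,v}-[F]_{\eps,v}\big)_v .$$
Both $\mu_{i,v}$ and $[F]_{\eps,v}$ are probability measures with continuous potentials and no atoms at any place (the regularization replaces each point of $F$ by the uniform measure on the circle of radius $\eps_v$ when $v$ is archimedean, and by the point mass at the Type II/III point $\zeta_{x,\eps_v}$ when $v$ is non-archimedean), so each local energy is finite and I may expand it bilinearly as
$$\big(\mu_{i,v},\mu_{i,v}\big)_v \;-\; 2\big(\mu_{i,v},[F]_{\eps,v}\big)_v \;+\; \big([F]_{\eps,v},[F]_{\eps,v}\big)_v .$$
Summing against the weights $r_v$, the first group contributes $\sum_v r_v(\mu_{i,v},\mu_{i,v})_v = 2\,h_{\mu_i}(\infty)$, which vanishes: $\infty$ is a fixed point of $f_{c_i}$, hence preperiodic, so $\hat h_{c_i}(\infty)=h_{\mu_i}(\infty)=0$ by \cite{Call:Silverman}. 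The middle group contributes exactly $-2\sum_v r_v(\mu_{i,v},[F]_{\eps,v})_v$, which is (twice) the first term in the asserted bound.

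The remaining work is to control the self-energy $\sum_v r_v([F]_{\eps,v},[F]_{\eps,v})_v$. Writing $[F]_{\eps,v}=|F|^{-1}\sum_{x\in F}\sigma_{x,v}$, where $\sigma_{x,v}$ is the local regularization of radius $\eps_v$ at $x$, bilinearity gives $([F]_{\eps,v},[F]_{\eps,v})_v=|F|^{-2}\sum_{x,y\in F}(\sigma_{x,v},\sigma_{y,v})_v$. The mean-value property of the local kernels (classical potential theory at archimedean $v$, the Hsia/energy kernel on $\A^{1,an}_v$ at non-archimedean $v$; see \cite{BRbook}) yields $(\sigma_{x,v},\sigma_{x,v})_v=\log(1/\eps_v)$ for the $|F|$ diagonal terms, and $(\sigma_{x,v},\sigma_{y,v})_v\le -\log|x-y|_v$ for $x\neq y$ (with equality once the two circles, resp. disks, of radius $\eps_v$ are disjoint, since then the kernel has its Type I value). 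Hence
$$\sum_{v\in M_K} r_v\,\big([F]_{\eps,v},[F]_{\eps,v}\big)_v \;\le\; \frac{1}{|F|}\sum_{v\in M_K} r_v\log\frac{1}{\eps_v} \;-\; \frac{1}{|F|^2}\sum_{\substack{x,y\in F\\ x\neq y}}\;\sum_{v\in M_K} r_v\log|x-y|_v \;=\; \frac{1}{|F|}\sum_{v\in M_K} r_v\log\frac{1}{\eps_v},$$
because $\prod_{x\neq y\in F}(x-y)$ is a nonzero element of $K$ — it is a symmetric polynomial in the $\Gal(\Kbar/K)$-orbit $F$, up to sign the discriminant of the monic polynomial with root set $F$ — so the product formula over $K$ kills each inner sum. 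All sums above are finite, since $\eps_v=1$ and the elements of $F$ are $v$-adic integers for all but finitely many $v$.

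Combining the three groups and dividing by $2$ gives
$$d(\mu_i,[F]_\eps)^2 \;\le\; \sum_{v\in M_K} r_v\left(-\big(\mu_{i,v},[F]_{\eps,v}\big)_v + \frac{\log(1/\eps_v)}{2|F|}\right),$$
and feeding this for $i=1$ and $i=2$ into \eqref{triangle distance} produces the stated inequality. The genuinely delicate points in this plan are the non-archimedean self-energy identities and the bound $(\sigma_{x,v},\sigma_{y,v})_v\le -\log|x-y|_v$, which depend on the explicit form of the energy kernel on the Berkovich line and on the geometry of the points $\zeta_{x,\eps_v}$ (in particular when the regularizing disks overlap); everything else is bilinearity of the energy, the product formula, and the vanishing of $h_{\mu_i}(\infty)$.
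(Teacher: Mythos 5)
Your proposal is correct and follows essentially the same route as the paper: expand $d(\mu_i,[F]_\eps)^2$ bilinearly, note that the $(\mu_i,\mu_i)$ contribution vanishes, bound the self-energy of the regularized measure by $\frac{\log(1/\eps_v)}{|F|}$ plus the unregularized energy, kill the latter with the product formula (valid since $\prod_{x\neq y}(x-y)\in K^*$ by Galois invariance of $F$), and finish with the triangle inequality \eqref{triangle distance}. The only differences are cosmetic: you re-derive the regularization estimate that the paper cites from \cite{Fili:energy} and \cite{FRL:equidistribution} via the mean-value property, and you justify the vanishing of the $\mu_i$ self-energy through $h_{\mu_i}(\infty)=\hat h_{c_i}(\infty)=0$, whereas the paper uses the stronger place-by-place fact $(\mu_{i,v},\mu_{i,v})_v=0$ (capacity one of the filled Julia set of a monic polynomial at every place); either suffices.
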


\begin{proof}
We first observe that 
\begin{eqnarray*}
d(\mu_i, [F]_\eps)^2 &=& \frac12 \sum_v \frac{[K_v:\Q_v]}{[K:\Q]} \; (\mu_i - [F]_\eps, \mu_i - [F]_\eps)_v \\
	&=& \sum_v \frac{[K_v:\Q_v]}{[K:\Q]} \left( -(\mu_i, [F]_\eps)_v + \frac12 ([F]_\eps, [F]_\eps)_v \right),
\end{eqnarray*}	
because $(\mu_i, \mu_i)_v = 0$ at every place.  The self-pairing of $[F]_\eps$ can be estimated in terms of the self-pairing of $[F]$ (\cite[Lemma 12]{Fili:energy} and \cite[Lemma 4.11]{FRL:equidistribution}), as
$$([F]_\eps, [F]_\eps )_v \leq ([F], [F])_v + \frac{\log (1/\eps_v)}{|F|}.$$
But observe that 
	$$\sum_v \frac{[K_v:\Q_v]}{[K:\Q]}([F], [F])_v = 0$$
by the product formula on $K$.  So the triangle inequality \eqref{triangle distance} completes the proof of the proposition.
\end{proof}

\subsection{Proof of Theorem \ref{boundingN}}\label{boundingNproof}
Fix any $L \geq 27$, and recall the definition of the auxiliary height $h_L$ on $\A^2(\Qbar)$ from \S\ref{h_L}.  
An appropriate choice of $\eps = \{\eps_v\}$ in Lemma \ref{inequalityallplaces} gives:

\begin{prop}  \label{upper bound with L}
Fix any $L \geq 27$.  Fix $c_1$ and $c_2$ in $\Qbar$, and assume $f_{c_1}$ and $f_{c_2}$ have $N > 1$ preperiodic points in common in $\P^1(\Qbar)$.  Then for all $0 < \delta < 1/4$, we have 
	$$\<f_{c_1}, f_{c_2} \>  \leq 4 \left( \delta + \frac{3\log(1/\delta)}{2(N-1)} \right) h_L(c_1, c_2).$$
\end{prop}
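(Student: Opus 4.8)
The plan is to apply Lemma~\ref{inequalityallplaces} with $F$ the set of common preperiodic points (of size $N$) and with an adelic radius $\eps = \{\eps_v\}$ chosen so that the term $-(\mu_i, [F]_\eps)_v$ is controlled by $\delta \ell_v$ at every place. First I would fix a small $\delta \in (0, 1/4)$ and set
$$\eps_v = \left(\max\{|c_1|_v, |c_2|_v, L_v\}\right)^{-3\log(1/\delta)}$$
at archimedean places and places above $2$ (with $L_v = L$ or $16$ respectively), and $\eps_v = \left(\max\{|c_1|_v, |c_2|_v, 1\}\right)^{-\log(1/\delta)}$ at the remaining non-archimedean places; at places where $\ell_v = 0$ I would just take $\eps_v = 1$. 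These are exactly the radii appearing in Propositions~\ref{archimedean epsilon}, \ref{escapebound pnot2}, and \ref{escapebound p2}, which tell us that $\lambda_{c_i, v}(z) \le \delta\, \ell_v$ whenever $z$ is within distance $\eps_v$ of the filled Julia set $K_{c_i}$.

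The key step is to translate this bound on $\lambda_{c_i,v}$ into a bound on $-(\mu_i, [F]_\eps)_v$. The mutual energy $-(\mu_i, [F]_\eps)_v$ is, up to the product-formula normalization, an average over the points $x \in F$ of the potential of $\mu_{c_i,v}$ evaluated on the circle (or Type~II/III point) of radius $\eps_v$ about $x$. Since the points of $F$ are preperiodic for both maps, they lie in the filled Julia set $K_{c_i, v}$ at every place; hence the circle of radius $\eps_v$ about each $x\in F$ lies within $\eps_v$ of $K_{c_i,v}$, and the escape-rate/potential estimate gives that the relevant potential is at most $\delta\,\ell_v$ there (using that $\lambda_{c_i,v}$ differs from the $\mu_{c_i,v}$-potential by a place-independent normalization, and that $\lambda_{c_i,v}$ vanishes on $K_{c_i,v}$). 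Summing, $-(\mu_i, [F]_\eps)_v \le \delta\,\ell_v$ for every $v$, so
$$\sum_{v} \frac{[K_v:\Q_v]}{[K:\Q]}\bigl(-(\mu_i, [F]_\eps)_v\bigr) \le \delta \sum_v \frac{[K_v:\Q_v]}{[K:\Q]} \ell_v = \delta\, h_L(c_1, c_2).$$

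For the second term in Lemma~\ref{inequalityallplaces}, I would compute
$$\sum_v \frac{[K_v:\Q_v]}{[K:\Q]} \cdot \frac{\log(1/\eps_v)}{2|F|} = \frac{1}{2N}\sum_v \frac{[K_v:\Q_v]}{[K:\Q]} \cdot 3\log(1/\delta)\,\ell_v = \frac{3\log(1/\delta)}{2N}\, h_L(c_1,c_2),$$
using the definition of $\eps_v$ (the exponent is $3\log(1/\delta)$ at the archimedean and dyadic places and only $\log(1/\delta)$ elsewhere, so this is an upper bound). Plugging both estimates into Lemma~\ref{inequalityallplaces} for $i = 1, 2$ gives
$$\<f_{c_1}, f_{c_2}\>^{1/2} \le 2\left(\delta\, h_L(c_1,c_2) + \frac{3\log(1/\delta)}{2N}\, h_L(c_1, c_2)\right)^{1/2},$$
and replacing $N$ by $N-1 < N$ in the denominator (to match the stated form, and harmlessly weakening the bound) and squaring yields
$$\<f_{c_1}, f_{c_2}\> \le 4\left(\delta + \frac{3\log(1/\delta)}{2(N-1)}\right) h_L(c_1, c_2),$$
as desired.

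The main obstacle I anticipate is the bookkeeping at the non-archimedean places and the precise passage from the pointwise escape-rate bound to the bound on the mutual-energy integral $-(\mu_i, [F]_\eps)_v$: one must be careful that the potential-theoretic kernel $\delta_v$ on the Berkovich line, and the regularization $[F]_\eps$ supported on circles rather than points, interact correctly with the statement that $\lambda_{c_i,v}$ is the appropriate potential of $\mu_{c_i,v}$ normalized to vanish on $K_{c_i,v}$. Getting the constant $3\log(1/\delta)$ uniform across all place types (so that it can be pulled out of the sum against $\ell_v$) relies on the uniform exponent appearing in Propositions~\ref{archimedean epsilon}, \ref{escapebound pnot2}, and \ref{escapebound p2}, so I would double-check that the $v\mid 2$ case (with its extra $\log 16$) and the $\ell_v = 0$ case are both subsumed.
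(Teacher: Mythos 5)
Your proposal is essentially the paper's own proof: apply Lemma~\ref{inequalityallplaces} to the Galois-invariant set of common preperiodic points with the adelic radius $\eps_v = \delta^{3\ell_v}$ (your place-by-place variant is an inconsequential refinement), invoke Propositions~\ref{archimedean epsilon}, \ref{escapebound pnot2}, \ref{escapebound p2} to get $-(\mu_i,[F]_\eps)_v \le \delta\,\ell_v$, and sum against $\ell_v$ to produce $h_L(c_1,c_2)$.

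One bookkeeping slip: you take $|F| = N$, but $N$ counts points of $\P^1(\Qbar)$ and always includes $\infty$, which cannot be put into $F$ --- Lemma~\ref{inequalityallplaces} requires $F \subset \Qbar$, and your key step (``the circle of radius $\eps_v$ about each $x \in F$ lies within $\eps_v$ of $K_{c_i,v}$'') fails at $x = \infty$, where $\lambda_{c_i,v}$ blows up rather than being small. The fix is to take $F$ to be the affine common preperiodic points, so $|F| = N-1$ (nonempty precisely because $N > 1$, which is why that hypothesis appears); the identical computation then yields the stated bound with $2(N-1)$ directly, and your final ``replace $N$ by $N-1$'' step becomes unnecessary. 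Your appeal to $\lambda_{c_i,v}$ being the potential of $\mu_{c_i,v}$ is fine and is in fact exact here (monic polynomials have homogeneous capacity $1$, so no normalizing constant enters), which is what the paper uses implicitly as well.
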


\begin{proof}
Fix a number field $K$ containing $c_1$ and $c_2$.  Let $F$ be the $\Gal(\Kbar/K)$-invariant set of common preperiodic points for $f_{c_1}$ and $f_{c_2}$ in $\Qbar$, so that $|F| = N-1$.  For each place $v \in M_K$, recall the definition of $\ell_v$ from \S\ref{h_L}.  Fix $0 < \delta < 1/4$ and set 
	$$\eps_v = \delta^{3\, \ell_v}.$$
Note that $\eps_v = 1$ for all but finitely many places $v \in M_K$.

For each archimedean place $v$, note that 
	$$\eps_v = e^{-3 \, \ell_v \log(1/\delta)} = \max\{|c_1|_v, |c_2|_v, L\}^{-3 \log(1/\delta)},$$
so Proposition \ref{archimedean epsilon} implies that 
	$$\lambda_{c_i, v}(z) \leq \delta\, \ell_v$$
for any point $z$ within a neighborhood of radius $\eps_v$ of the filled Julia set $K_{c_i}$.  As all points of $F$ lie in $K_{c_i}$, this implies that 
	$$-(\mu_i, [F]_\eps)_v \leq \delta \, \ell_v$$
for this choice of $\eps_v$ and each $i$.  

Similarly for each non-archimedean place $v \nmid 2$, we apply Proposition \ref{escapebound pnot2}, and for each non-archimedean $v \mid 2$, we apply Proposition \ref{escapebound p2}.  

Summing over all places, we find that 
\begin{eqnarray*}
\sum_{v \in M_K} \frac{[K_v:\Q_v]}{[K:\Q]} \left( -(\mu_i, [F]_\eps)_v + \frac{\log(1/\eps_v)}{2|F|} \right) &\leq& \sum_v \frac{[K_v:\Q_v]}{[K:\Q]} \left(\delta \, \ell_v + \frac{3 \log(1/\delta)}{2|F|}\, \ell_v \right) \\
 &=&  \left(\delta  + \frac{3 \log(1/\delta)}{2|F|}\, \right) \; h_L(c_1, c_2).
\end{eqnarray*}
Lemma \ref{inequalityallplaces} then implies
\begin{eqnarray*}
 \< f_{c_1}, f_{c_2} \>^{1/2} &\leq& \sum_{i = 1}^2 \left( \sum_{v \in M_K} \frac{[K_v:\Q_v]}{[K:\Q]}  \left( - ( \mu_i, [F]_{\eps} )_v +\frac{\log (1/\eps_v)}{2 |F|} \right)\right)^{1/2} \\
  &\leq& 2 \left( \left(\delta  + \frac{3 \log(1/\delta)}{2|F|}\, \right) \; h_L(c_1, c_2) \right)^{1/2}.
\end{eqnarray*}
Squaring both sides yields the proposition.
\end{proof}

Now fix any $\eps$ between 0 and 1, and let $\delta = \eps/25$.  Applying Proposition \ref{upper bound with L} with $L = 27$, we have 
\begin{eqnarray*}
\<f_{c_1}, f_{c_2} \>  &\leq& 4 \left( \delta + \frac{3\log(1/\delta)}{2(N-1)} \right) h_L(c_1, c_2) \\
	&\leq& 4 \left( \delta + \frac{3\log(1/\delta)}{2(N-1)} \right) (h(c_1, c_2) + \log 16 + \log 27) \\
	&\leq& \left( \eps + \frac{C(\eps)}{N-1} \right) (h(c_1, c_2) + 1)
\end{eqnarray*}
with $C(\eps) = 40 \log(25/\eps)$.  This completes the proof of Theorem \ref{boundingN}.

%%%%%%
\bigskip
\section{Proof of Theorem \ref{preperbound}} \label{mainproof}

In this section, we prove Theorem \ref{preperbound}, providing a uniform bound on the number of common preperiodic points for any pair $f_{c_1}$ and $f_{c_2}$ with $c_1 \not= c_2$ in $\C$.

\subsection{Proof over $\Qbar$}
Assume that $c_1$ and $c_2$ are in $\Qbar$.

We first use Theorem \ref{pairingbound} and \ref{boundingN} to provide a bound on 
	$$N := N(c_1, c_2) = |\Preper(f_{c_1})\cap\Preper(f_{c_2})|$$ 
when the height $h(c_1,c_2)$ is large.  The two theorems combined show that, if $N >1$, then it must satisfy
	$$\alpha_1 \, h(c_1, c_2) - C_1 \leq \left(\eps + \frac{C(\eps)}{N-1}\right)(h(c_1,c_2) + 1)$$
for every choice of $0 < \eps < 1$, and thus,
	$$\left( \alpha_1 - \eps - \frac{C(\eps)}{N-1}\right) (h(c_1,c_2)+1) \leq C_1 + \alpha_1.$$
Taking $\eps = \alpha_1/2$, we have
	$$\frac{\alpha_1}{2} - \frac{C(\eps)}{N-1} \leq \frac{C_1 + \alpha_1}{h(c_1,c_2) + 1}.$$
If we assume that 
	$$h(c_1, c_2) + 1 > \frac{4(C_1+\alpha_1)}{\alpha_1},$$
then the inequality becomes 
\begin{equation} \label{large height}
	N-1 < \frac{4C(\alpha_1/2)}{\alpha_1},
\end{equation}
providing a uniform bound on $N$ for all pairs $(c_1, c_2)$ of sufficiently large height.  

Now suppose that $h(c_1,c_2) +1 \leq 4(C_1+\alpha_1)/\alpha_1$.  We combine the uniform lower bound of Theorem \ref{uniformbound} with the upper bound of Theorem \ref{boundingN} to obtain
	$$\delta \leq  \left(\eps + \frac{C(\eps)}{N-1}\right)(h(c_1,c_2) + 1) \leq  \left(\eps + \frac{C(\eps)}{N-1}\right)\frac{4(C_1+\alpha_1)}{\alpha_1}$$
for any choice of $0 < \eps < 1$.  This unwinds to give 
\begin{equation} \label{bounded height}
	N-1 \leq \frac{C(\eps)}{\frac{\alpha_1\delta}{4(C_1+\alpha_1)} - \eps}.
\end{equation}
Choosing any $\eps < \alpha_1\delta/4(C_1+\alpha_1)$ gives a uniform bound on $N$.

\subsection{Proof over $\C$}  \label{proof over C}
Let $B$ denote a uniform bound on the number of common preperiodic points over all $c_1 \not= c_2$ in $\Qbar$.  Now fix $c_1$ in $\C\setminus \Qbar$.  For any $c_2 \in \C$, if $f_{c_1}$ and $f_{c_2}$ have at least one preperiodic point in common, then the field $\Q(c_1, c_2)$ must have transcendence degree 1 over $\Q$.  Moreover, if $x_1, x_2, \ldots, x_{B+1}$ denote distinct common preperiodic points for $f_{c_1}$ and $f_{c_2}$, then $k = \Q(c_1, c_2, x_1, \ldots, x_{B+1})$ will also be of transcendence degree 1, as each $x_i$ satisfies relations of the form
\begin{equation} \label{persistent}
	f_{c_1}^{n_i}(x_i) = f_{c_1}^{m_i}(x_i) \mbox{ for } n_i > m_i \geq 0 \quad\mbox{ and } \quad f_{c_2}^{k_i}(x_i) = f_{c_2}^{l_i}(x_i) \mbox{ for } k_i > l_i \geq 0.
\end{equation}
We may view $k$ as the function field $K(T)$ of an algebraic curve $T$ defined over a number field $K$.  In this way, the maps $f_{c_1}$ and $f_{c_2}$ are viewed as families of maps, parameterized by $t\in T(\C)$, and the relations \eqref{persistent} hold persistently in $t$.  

Now assume $c_2 \not = c_1$, so that the specializations $f_{c_1(t)}$ and $f_{c_2(t)}$ are distinct for all but finitely many $t \in T(\C)$.  As the elements $\{x_1, \ldots, x_{B+1}\}$ are distinct in $k$, their specializations $\{x_1(t), \ldots, x_{B+1}(t)\}$ are also distinct for all but finitely many $t$ in $T(\C)$.  In particular, this implies that we can find $t\in T(\Qbar)$ so that $c_1(t) \not= c_2(t)$ in $\Qbar$ and $f_{c_1(t)}$ and $f_{c_2(t)}$ share at least $B+1$ preperiodic points; this is a contradiction.  

Thus, the theorem is proved for all pairs $c_1 \not= c_2$ in $\C$, with the same bound as for pairs $c_1\not= c_2$ in $\Qbar$.

\section{Effective bounds on common preperiodic points} \label{effective}

In this section, we make effective Theorems  \ref{uniformbound}, \ref{pairingbound}, and \ref{boundingN}, to produce an explicit value for the bound $B$ of Theorem \ref{preperbound}:

\begin{theorem} \label{effectiveB} For all $c_1 \ne c_2 \in \mathbb{C}$, we have
$$|\Preper(f_{c_1}) \cap \Preper(f_{c_2})| \leq 10^{103}.$$
\end{theorem}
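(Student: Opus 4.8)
The plan is to assemble the effective value $B = 10^{82}$ by chaining together the explicit constants already recorded for Theorems \ref{uniformbound}, \ref{pairingbound}, and \ref{boundingN}, following the two-case split of \S\ref{mainproof}: one case where the height $h(c_1,c_2)$ is large, and one where it is bounded. First I would collect the explicit inputs: from the proof of Theorem \ref{pairingbound} (see \S\ref{pairingboundproof}) we may take $\alpha_1 = 1/192$, $C_1 = 3/17$, $\alpha_2 = 1/2$, $C_2 = 5/2$; from Theorem \ref{boundingN} (see \S\ref{boundingNproof}) we may take $C(\eps) = 40\log(25/\eps)$; and from the effective form of Theorem \ref{uniformbound} worked out in \S\ref{explicit delta} we may take $\delta = 10^{-75}$. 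Since $B$ is a bound on $N(c_1,c_2) = |\Preper(f_{c_1})\cap\Preper(f_{c_2})|$ over all $c_1\neq c_2$ in $\C$, and the reduction of \S\ref{proof over C} shows that it suffices to bound $N$ over $\Qbar$, the whole argument reduces to bounding $N$ over $\Qbar$ with these explicit constants in hand.

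Next I would run the large-height case. If $N > 1$ and $h(c_1,c_2) + 1 > 4(C_1+\alpha_1)/\alpha_1$, then inequality \eqref{large height} of \S\ref{mainproof} gives $N - 1 < 4 C(\alpha_1/2)/\alpha_1 = 4\cdot 192\cdot 40\log(50\cdot 192) \approx 3.1\times 10^5$, a bound of order $10^6$, which is comfortably below $10^{82}$. The threshold here is $4(C_1+\alpha_1)/\alpha_1 = 4\cdot 192\cdot(3/17 + 1/192)$, a concrete number around $145$; I would record it as an explicit constant $H_0$ so the two cases are cleanly delineated.

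Then comes the bounded-height case, which is where the tiny $\delta$ matters and which dominates the final bound. When $h(c_1,c_2)+1 \leq 4(C_1+\alpha_1)/\alpha_1 =: H_0$, combining the lower bound $\<f_{c_1},f_{c_2}\> \geq \delta$ from Theorem \ref{uniformbound} with the upper bound from Theorem \ref{boundingN} gives, via \eqref{bounded height},
\begin{equation*}
N - 1 \;\leq\; \frac{C(\eps)}{\dfrac{\alpha_1\delta}{4(C_1+\alpha_1)} - \eps}
\end{equation*}
valid for any $0 < \eps < \alpha_1\delta/(4(C_1+\alpha_1)) = \delta/H_0$. I would choose $\eps = \delta/(2H_0)$, so the denominator becomes $\delta/(2H_0)$ and
\begin{equation*}
N - 1 \;\leq\; \frac{2H_0}{\delta}\, C\!\left(\frac{\delta}{2H_0}\right) \;=\; \frac{2H_0}{\delta}\cdot 40\log\!\left(\frac{50 H_0}{\delta}\right).
\end{equation*}
With $H_0 \approx 145$ and $\delta = 10^{-75}$, the prefactor $2H_0/\delta$ is about $2.9\times 10^{77}$ and the logarithm $40\log(50 H_0/\delta) = 40\log(7250\cdot 10^{75})$ is of order $10^{4}$; multiplying and rounding up generously yields $N \leq 10^{82}$. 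Finally, the reduction of \S\ref{proof over C} transports this bound from $\Qbar$ to all of $\C$ verbatim, completing the proof.

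The main obstacle is not any single estimate but the bookkeeping: one must verify that every constant invoked ($\alpha_1, C_1$ from \S\ref{pairingboundproof}, $C(\eps)$ from \S\ref{boundingNproof}, $\delta$ from \S\ref{explicit delta}) is indeed the effective value proved there and not merely asserted, and then check that the choice $\eps = \delta/(2H_0)$ stays within the admissible range $(0,1)$ and below $\delta/H_0$ — which it does since $\delta$ is minuscule — and that the arithmetic $2H_0\delta^{-1}\cdot 40\log(50H_0\delta^{-1})$ rounds up to something at most $10^{82}$. Since the dominant factor is $\delta^{-1} = 10^{75}$ and the remaining factors contribute only a few more orders of magnitude, the stated $B = 10^{82}$ has ample slack, so the calculation is robust to the precise value of $H_0$ and to how the logarithm is bounded.
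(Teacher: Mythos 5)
Your proposal is correct and follows essentially the same route as the paper's own proof: the same large-height/bounded-height split via \eqref{large height} and \eqref{bounded height}, the same constants $\alpha_1=1/192$, $C_1=3/17$, $C(\eps)=40\log(25/\eps)$, and the same choice $\eps=\alpha_1\delta/(8(C_1+\alpha_1))$, yielding a bound of roughly $2\times 10^{81}\leq 10^{82}$. The only caveat is that you take $\delta=10^{-75}$ as an off-the-shelf input, whereas in the paper deriving this explicit $\delta$ (via Proposition \ref{close and bounded c} in \S\ref{explicit delta}) is the bulk of the work in Section \ref{effective}; your remaining bookkeeping, including the harmless rounding of $4(C_1+\alpha_1)/\alpha_1\approx 139.5$ to $145$, matches the paper's computation.
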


\subsection{An explicit lower bound in Theorem \ref{uniformbound}}
\label{explicit delta}

In order to provide an effective lower bound $\delta$ for Theorem \ref{uniformbound}, we need to improve our estimates on the energy pairing $E_v(c_1, c_2)$ when $|c_1 - c_2|_v$ is small at an archimedean place $v$.  Here we compute that we can take $\delta = 10^{-96}$.  

Let $H = 32001^{100/99}$.  Suppose that $c_1$ and $c_2$ lie in a number field $K$, and suppose that for at least $99/100$ of the archimedean places of $K$, we have 
	$$|c_1-c_2|_v \leq 1/H.$$
Then $h(c_1-c_2) \geq \frac{99}{100} \log H$, and the proof of Theorem \ref{weak pairingbound} implies that 
$$\< f_{c_1}, f_{c_2}\> \geq \frac{1}{16} h(c_1-c_2) - \frac{1}{16} \log(32000) \geq \frac{\log (32001/32000)}{16} > 10^{-6}.$$

Now suppose that we have $|c_1 - c_2|_v > 1/H$ for at least $1/100$ of the archimedean places of $K$.  Let $M = 9H^2$ so that 
	$$|c_1 - c_2|_v > \frac{1}{H} = \frac{3}{M^{1/2}}$$
at all of these places.  If $\max\{|c_1|_v,|c_2|_v\} > M$, then Theorem \ref{archimedeanbounds} implies that 
	$$E_v(c_1,c_2) \geq \frac{1}{64} \log M > 0.14$$
at this place $v$.  On the other hand, if $\max\{|c_1|_v,|c_2|_v\} \leq M$, we have the following bound:

\begin{prop} \label{close and bounded c}
Fix any $M\geq 1000$.   Then for all $s \geq M^2$, we have 
	$$E_\infty(c_1, c_2) \geq  \frac{|c_1-c_2|^2}{32 s^4} - \frac{117}{100}\frac{M^3}{s^6},$$
provided $\max\{|c_1|, |c_2|\} \leq M$.
\end{prop}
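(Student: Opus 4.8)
The plan is to rewrite the Euclidean energy as a Dirichlet integral and then isolate the quadratic ($z^{-2}$) Laurent term on a large circle. Set $U:=\lambda_{c_1}-\lambda_{c_2}$. Since $f_{c_i}$ is monic of degree $2$, the filled Julia set $K_{c_i}$ has logarithmic capacity $1$, so $\lambda_{c_i}$ is exactly the logarithmic potential $\int\log|z-w|\,d\mu_{c_i}(w)$; hence $U$ is continuous on $\C$, harmonic off $J_{c_1}\cup J_{c_2}$, satisfies $U(z)=O(|z|^{-2})$ as $z\to\infty$, and $\tfrac{1}{2\pi}\Delta U=\mu_{c_1}-\mu_{c_2}$. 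Using $E_\infty=\int\lambda_{c_1}\,d\mu_{c_2}=\int\lambda_{c_2}\,d\mu_{c_1}$ together with $\int\lambda_{c_i}\,d\mu_{c_i}=0$, one gets $2E_\infty=-\int U\,d(\mu_{c_1}-\mu_{c_2})=-\tfrac{1}{2\pi}\int_\C U\,\Delta U$, and integrating by parts (the boundary term at $\infty$ vanishes by the decay of $U$) yields
$$E_\infty(c_1,c_2)=\frac{1}{4\pi}\int_\C|\nabla U|^2\ \geq\ \frac{1}{4\pi}\int_{|z|>s}|\nabla U|^2 .$$
The restriction to $\{|z|>s\}$ is legitimate: as in the proof of Proposition~\ref{archimedean epsilon}, $K_{c_1}\cup K_{c_2}\subset D(0,2^{3/2}M^{1/2})\subset D(0,s)$ for all $s\geq M^2$ (using $M\geq 1000$), so $U$ is harmonic and decaying on $\{|z|>s\}$.

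Next I would produce an effective expansion of $U$ on the circle $|z|=s$. From the identity $f_c^{k+1}(z)=f_c^k(z)^2\bigl(1+c\,f_c^k(z)^{-2}\bigr)$ one obtains the convergent series $\lambda_c(z)=\log|z|+\sum_{k\geq0}2^{-(k+1)}\log\bigl|1+c\,f_c^k(z)^{-2}\bigr|$ for $|z|$ large. For $|z|=s\geq M^2$ and $|c|\leq M$ one has $|c/z^2|\leq M/s^2\leq 10^{-9}$ and the forward orbit of $z$ escapes doubly exponentially, so the $k=0$ term is $\tfrac12\operatorname{Re}(c/z^2)$ up to an explicit error of size $O(M^2/s^4)$, while the terms $k\geq1$ sum to $O(M/s^4)$; hence on $|z|=s$,
$$U(se^{i\theta})=\frac12\operatorname{Re}\!\left(\frac{c_1-c_2}{s^2}\,e^{-2i\theta}\right)+E(\theta),\qquad |E(\theta)|\leq C_0\,\frac{M^2}{s^4},$$
with an explicit constant $C_0$. (In fact the quadratic Laurent coefficient of $U$ is exactly $\tfrac12(c_1-c_2)$ — equivalently $\int w^2\,d\mu_c(w)=-c$, which follows from $(f_c)_*\mu_c=\mu_c$ and $\int w\,d\mu_c=0$ — so $E$ contributes nothing to the $e^{\pm2i\theta}$ Fourier modes; keeping only the crude pointwise bound on $E$ is what produces the $M^3/s^6$ term below.)

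Finally I would invoke the exterior Dirichlet--to--Neumann identity: since $U$ is harmonic on $\{|z|>s\}$ with $U\to0$, Parseval on $|z|=s$ gives $\int_{|z|>s}|\nabla U|^2=2\pi\sum_{n\in\Z}|n|\,|\widehat U_n(s)|^2\geq 8\pi\,|\widehat U_2(s)|^2$, where $\widehat U_2(s)=\tfrac1{2\pi}\int_0^{2\pi}U(se^{i\theta})e^{-2i\theta}\,d\theta=\dfrac{\overline{c_1-c_2}}{4s^2}+\widehat E_2$ with $|\widehat E_2|\leq\max_\theta|E(\theta)|\leq C_0M^2/s^4$. Combining this with the first display, and using $|a|^2\geq|b|^2-2|b|\,|a-b|$ together with $|c_1-c_2|\leq 2M$, gives
$$E_\infty(c_1,c_2)\ \geq\ 2\,|\widehat U_2(s)|^2\ \geq\ \frac{|c_1-c_2|^2}{8s^4}-\frac{2C_0\,M^3}{s^6},$$
which is stronger than the claimed inequality as soon as the explicit bookkeeping of the second step yields $2C_0\leq\tfrac{117}{100}$ (the displayed leading constant $\tfrac1{32}$ being well below the optimal $\tfrac18$, leaving ample room). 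The hard part will be precisely that second step: getting an honest small constant $C_0$ in the remainder bound requires controlling the logarithmic series near infinity term by term, which is routine but is where all of the numerical work sits; everything else is soft potential theory.
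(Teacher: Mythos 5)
Your proposal is correct, and it takes a genuinely different route from the paper's. Both arguments begin the same way, writing $2E_\infty=\int d\lambda\wedge d^c\lambda$ for $\lambda=\lambda_{c_1}-\lambda_{c_2}$ and discarding the energy inside $\{|z|\le s\}$; but the paper then evaluates the boundary flux $-\int_{|z|=s}\lambda\, d^c\lambda$ directly, which forces it to prove explicit asymptotics not only for $\lambda$ on the circle (its Lemma \ref{escape estimate}) but also for $\partial\lambda/\partial z$ and $\partial\lambda/\partial\bar z$ via the product formula of Lemmas \ref{ratio estimate} and \ref{partial estimate}, and finally to assemble the three error terms $I_1,I_2,I_3$ into the $\frac{117}{100}M^3/s^6$ loss. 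Your Dirichlet--to--Neumann/Parseval step, $\int_{|z|>s}|\nabla U|^2=2\pi\sum_n|n|\,|\widehat U_n(s)|^2\ge 8\pi|\widehat U_2(s)|^2$, lower-bounds the exterior energy using only the boundary values of $U$, so no derivative estimates are needed at all, and it already improves the leading constant from $1/32$ to $1/8$. The single deferred item -- an explicit $C_0$ with $|E(\theta)|\le C_0M^2/s^4$ -- is genuinely routine and has ample room: for $|z|=s\ge M^2$ and $|c|\le M$ one has $|c/z^2|\le M^{-3}$, so the $k=0$ term of $\sum_k 2^{-(k+1)}\log|1+c\,f_c^k(z)^{-2}|$ deviates from $\frac12\mathrm{Re}(c/z^2)$ by at most about $|c|^2/(4s^4)$ and the $k\ge1$ tail contributes at most about $|c|/(4s^4)$ (up to factors $1+O(10^{-9})$); summing over $c_1,c_2$ gives $C_0\approx 0.51$, hence $2C_0\approx 1.02<\frac{117}{100}$, and this computation is essentially the paper's Lemma \ref{escape estimate}, the easiest of its three lemmas. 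In fact your parenthetical observation deserves to be the main line: since $\int w\,d\mu_c=0$ and $\int w^2\,d\mu_c=-c$, the expansion $U(z)=-\mathrm{Re}\sum_{k\ge1}\bigl(m_k(\mu_{c_1})-m_k(\mu_{c_2})\bigr)/(kz^k)$ is valid on $|z|=s$ as soon as $\{|z|\le s\}$ contains both filled Julia sets (true for $s\ge M^2\gg 2^{3/2}M^{1/2}$), and the $k\ge 3$ terms do not touch the $n=\pm2$ Fourier modes, so $\widehat U_2(s)=\overline{c_1-c_2}/(4s^2)$ exactly; this gives $E_\infty\ge |c_1-c_2|^2/(8s^4)$ with no $M^3/s^6$ term whatsoever, which is strictly stronger than the stated proposition and would simplify the numerics of \S\ref{explicit delta}.
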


\noindent
Assuming Proposition \ref{close and bounded c}, we complete our computations.  With $M = 9H^2$, we have 
	$$E_v(c_1, c_2) \; \geq \; \frac{|c_1-c_2|_v^2}{32 s^4} - \frac{117 \cdot 9^3H^6}{100s^6} \; \geq \;
	\frac{1}{32 s^4 H^2}\left( 1 - \frac{117\cdot 2^5 9^3  H^8}{100 s^2} \right), $$
for all archimedean places $v$ with $|c_1 - c_2|_v > 1/H$, $\max\{|c_1|_v, |c_2|_v\} \leq 9H^2$, and $s > 9^2H^4$.  Choosing $s$ satisfying $s^2 =117\cdot 2^69^3H^8/100$, we conclude that 
	$$E_v(c_1, c_2) \; \geq \; \frac{100^2}{ 2^{18}9^6 117^2 H^{18}}$$
for all such places $v$.  This shows that, summing only over the archimedean places, we have
\begin{eqnarray*} 
\<f_{c_1}, f_{c_2} \> &\geq& \sum_{v \in M_K^\infty} \frac{[K_v:\Q_v]}{[K:\Q]} E_v(c_1, c_2) \\
 	&\geq&  \frac{1}{100} \; \min \left\{ 0.14,  \frac{100^2}{ 2^{18}9^6 117^2 H^{18}}  \right\} \; > \; 10^{-96},
\end{eqnarray*}
whenever $|c_1 - c_2|_v > 1/H$ for at least $1/100$ of the archimedean places of $K$.  This completes the computation of $\delta$, and it remains only to prove Proposition \ref{close and bounded c}.

\begin{proof}[Proof of Proposition \ref{close and bounded c}]
The result will follow from a series of elementary estimates on the values of the escape-rate functions outside the filled Julia set.  Let $\varphi_c$ be the B\"ottcher function for $f_c(z)=z^2+c$, so that $\varphi_c(f_c(z))=\varphi_c^2(z)$ for all $z$ large enough, and therefore $\varphi_c$ has expansion
\begin{equation}\label{phi expansion}\varphi_c(z)=z+\frac{c}{2z}+\cdots
\end{equation}
for $z$ near $\infty$.  We set 
   $$\lambda(z):=\lambda_{c_1}(z)-\lambda_{c_2}(z),$$
the difference of two escape-rate functions.   The energy pairing satisfies
   $$2E_\infty(c_1, c_2)=2\int_\mathbb C \lambda_{c_1}dd^c \lambda_{c_2}=-\int_\mathbb C \lambda dd^c \lambda = \int_{\mathbb C} d\lambda\wedge d^c \lambda.$$

Now fix any large $s>0$, and define $D^c_s:=\{z\in \mathbb C: |z|> s\}$.  
By Green's formula,  
  $$2E_\infty(c_1, c_2) \geq \int_{D^c_s}d\lambda \wedge d^c \lambda =-\int_{\partial D^c_s}\lambda d^c\lambda   =-\frac{1}{2\pi i}\int_{\partial D^c_s}\lambda \left(\frac{\partial \lambda}{\partial z}dz-\frac{\partial \lambda}{\partial \bar z}d\bar z\right).$$
We will estimate the latter integral.  

Note that $\lambda$ satisfies
   $$\lambda(z)=\log|\varphi_{c_1}|-\log|\varphi_{c_2}|$$
near $\infty$.  For simplicity, write $\eps:=c_1-c_2$. By the expansion \eqref{phi expansion} of $\varphi_c$
    $$2\lambda(z)=\frac{\eps}{2z^2}+\frac{\bar \eps}{2\bar z^2}+O\left(\frac{1}{|z|^3}\right).$$
Similarly, by using the Taylor expansion and letting $z=se^{i\theta}$ on the boundary $\partial D^c_s$, 
    $$2\left(\frac{\partial \lambda}{\partial z}dz-\frac{\partial \lambda}{\partial \bar z}d\bar z\right)=\left[-\left(\frac{\eps}{4s^3e^{2 i \theta}}+\frac{\bar \eps}{4s^3e^{-2 i \theta}}\right)+O\left(\frac{1}{s^4}\right)\right]isd\theta.$$
Consequently 
$$ -4\frac{1}{2\pi i}\int_{\partial D^c_s}\lambda \left(\frac{\partial \lambda }{\partial z}dz-\frac{\partial \lambda}{\partial \bar z}d\bar z\right) =  \frac{\eps\bar \eps}{4s^4}+O\left(\frac{1}{s^5}\right).$$
This gives
\begin{equation} \label{big O term}
    2E_\infty(c_1, c_2) \geq -\frac{1}{2\pi i}\int_{\partial D^c_s}\lambda \left(\frac{\partial \lambda }{\partial z}dz-\frac{\partial \lambda}{\partial \bar z}d\bar z\right) =\frac{\eps \bar \eps}{16s^4}+O\left(\frac{1}{s^5}\right)
\end{equation}
where $\eps=c_1-c_2$. To prove the proposition, we need control on the big-O term.

In the rest of this section, we fix an $M\geq 1000$. 

\begin{lemma}\label{escape estimate} Let $z, c_i\in \mathbb C$ with $|z|\geq M^2$, $|c_i|\leq M$ for $i=1,2$ and $\epsilon=c_1-c_2$.  Then
$$ \left|4 \lambda(z)-\left(\frac{\epsilon}{z^2}+\frac{\bar \epsilon}{\bar z^2}\right)\right| \leq \sum_{i=1,2}\left( \frac{202}{100}\frac{|c_i|}{|z|^4}+\frac{101}{100} \cdot \frac{|c_i|^2}{|z|^4}\right).$$
\end{lemma}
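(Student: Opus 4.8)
The plan is to read off the first two terms of the Laurent expansion at $\infty$ of each escape-rate function $\lambda_{c_i}$ from the product formula for the B\"ottcher coordinate, and to bound everything else by crude but explicit tail estimates, using throughout that $|z|\ge M^2$, $|c_i|\le M$, and $M\ge 1000$. Recall that $\varphi_c(z)=\lim_{n\to\infty}(f_c^n(z))^{1/2^n}$ near $\infty$; iterating $f_c^n(z)=(f_c^{n-1}(z))^2\bigl(1+c/(f_c^{n-1}(z))^2\bigr)$ gives the product $\varphi_c(z)/z=\prod_{n\ge 0}\bigl(1+c/(f_c^n(z))^2\bigr)^{1/2^{n+1}}$, and hence, after taking logarithms of absolute values and multiplying by $4$,
\[
4\lambda_c(z)-4\log|z| \;=\; \sum_{n\ge 0}\frac{2}{2^n}\,\operatorname{Re}\log\!\Bigl(1+\frac{c}{(f_c^n(z))^2}\Bigr).
\]
First I would record an elementary escape estimate: whenever $|w|\ge M^2$ and $|c|\le M$ one has $|f_c(w)|\ge |w|^2-|c|\ge |w|^2/2\ge |w|$, so by induction $|f_c^n(z)|\ge 2\,(|z|/2)^{2^n}$ for all $n$. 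In particular every factor above satisfies $|c/(f_c^n(z))^2|<1$ (so the product is legitimate and every logarithm can be taken on its principal branch), and $|c/(f_c^n(z))^2|$ is bounded by a constant multiple of $|c|/|z|^8$ for every $n\ge 2$.

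Next I would isolate the $n=0$ and $n=1$ terms. Writing $w=c/z^2$ (so $|w|\le M^{-3}$) and expanding $2\log(1+w)=2w-w^2+\tfrac23 w^3-\cdots$, the $n=0$ term equals $2\operatorname{Re}(c/z^2)-\operatorname{Re}(c^2/z^4)$ plus a tail bounded by $|c|^3/|z|^6$. Since $(f_c(z))^2=(z^2+c)^2$ with $|z^2+c|\ge |z|^2(1-M^{-3})$, the $n=1$ term equals $\operatorname{Re}(c/z^4)$ plus an error bounded by a constant multiple of $|c|^2/|z|^6$. Combining these with the $n\ge 2$ contribution, and using $|c|/|z|^2\le M^{-3}$, $|z|^{-2}\le M^{-4}$ and $|z|\ge M^2$ to rewrite each error as a multiple of $|c|/|z|^4$ or $|c|^2/|z|^4$, one obtains
\[
4\lambda_c(z)-4\log|z| \;=\; 2\operatorname{Re}\!\frac{c}{z^2}\;+\;\operatorname{Re}\!\frac{c-c^2}{z^4}\;+\;R_c(z),
\]
where, with $M\ge 1000$, the remainder satisfies $4|R_c(z)|\le \tfrac1{100}\bigl(|c|/|z|^4+|c|^2/|z|^4\bigr)$ with an enormous margin.

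Finally, since $2\operatorname{Re}(\epsilon/z^2)=\epsilon/z^2+\bar\epsilon/\bar z^2$, subtracting the identity above for $c_2$ from that for $c_1$ yields
\[
4\lambda(z)-\frac{\epsilon}{z^2}-\frac{\bar\epsilon}{\bar z^2} \;=\; \operatorname{Re}\!\frac{\epsilon-(c_1^2-c_2^2)}{z^4}\;+\;R_{c_1}(z)-R_{c_2}(z).
\]
The main term is at most $\bigl(|c_1|+|c_2|+|c_1|^2+|c_2|^2\bigr)/|z|^4=\sum_{i=1,2}\bigl(|c_i|+|c_i|^2\bigr)/|z|^4$ by the triangle inequality (using $|c_1^2-c_2^2|\le|c_1|^2+|c_2|^2$), and $|R_{c_1}(z)|+|R_{c_2}(z)|$ adds at most $\sum_i\bigl(\tfrac1{100}|c_i|/|z|^4+\tfrac1{100}|c_i|^2/|z|^4\bigr)$, so the sum is comfortably below $\sum_i\bigl(\tfrac{202}{100}|c_i|/|z|^4+\tfrac{101}{100}|c_i|^2/|z|^4\bigr)$. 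The only real work is bookkeeping: carrying out the tail estimates with explicit constants and checking that, for $M\ge 1000$, all accumulated slack stays far under $202/100$ and $101/100$; there is no conceptual obstacle once the product formula and the escape estimate are in place. I expect the fiddliest point to be the escape estimate guaranteeing that the product converges and that $|f_c^n(z)|$ is comparable to $|z|^{2^n}$, but even the crudest bound $|f_c(w)|\ge|w|^2/2$ suffices here.
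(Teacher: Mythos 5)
Your argument is correct, and it reaches the stated bound (in fact a slightly sharper one, with $\tfrac{101}{100}$ in place of $\tfrac{202}{100}$ on the $|c_i|/|z|^4$ term, since you keep the $\operatorname{Re}(c/z^4)$ and $-\operatorname{Re}(c^2/z^4)$ contributions explicitly before applying the triangle inequality). The route differs from the paper's in its decomposition. The paper never writes the full orbit expansion: it proves the sandwich $(|z|-|c/z|)^{2^n}\leq |f_c^n(z)|\leq (|z|+|c/z|)^{2^n}$, applies it one iterate in to compare $2\lambda_c(z)$ with $\log|z^2+c|$ up to an error $\tfrac{101}{100}|c|/|z|^4$, and then compares $\log(z^2+c_i)$ with $\log z^2 + c_i/z^2$ (and its conjugate), the $\log z^2+\log\bar z^2$ terms cancelling between $c_1$ and $c_2$ exactly as your $4\log|z|$ terms do. You instead use the exact telescoping identity
\[
4\lambda_c(z)-4\log|z|=\sum_{n\geq 0}\frac{2}{2^n}\,\operatorname{Re}\log\Bigl(1+\frac{c}{(f_c^n(z))^2}\Bigr),
\]
isolate the $n=0$ term to second order and the $n=1$ term to first order, and dump everything else into a tail controlled by your escape estimate $|f_c^n(z)|\geq 2(|z|/2)^{2^n}$. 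What your version buys is the explicit next-order term and a better constant; what the paper's buys is less series bookkeeping, since all iterates beyond the first are absorbed into a single sandwich estimate. One housekeeping remark: your sketch asserts the remainder bounds ``with an enormous margin'' rather than computing them, and there is a harmless slip where you write $4|R_c(z)|$ for what should be $|R_c(z)|$; since $|c_i|/|z|^2\leq M^{-3}\leq 10^{-9}$ and $|z|^{-2}\leq 10^{-12}$ under the hypotheses, the tails ($O(|c|^3/|z|^6)$, $O(|c|^2/|z|^6)$, $O(|c|/|z|^8)$) are indeed orders of magnitude below $\tfrac{1}{100}\,|c|/|z|^4$ and $\tfrac{1}{100}\,|c|^2/|z|^4$, so writing out those constants is routine and the argument is complete in substance.
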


\proof First note  that for any $x\in \C$ with $|x|<1$,
\begin{equation}\label{logestimate}
 |\log(1+x)-x| = \left|-\frac{x^2}{2}+\frac{x^3}{3}+\cdots \right| \leq \frac{|x|^2}{2(1-|x|)},
\end{equation}
where the $\log(1+x)$ is taken to be the one with $-\pi/2<\mathrm{Im}(\log(1+x))<\pi/2$.
For any $|z|\geq |c|$ and $|z|> 4$, inductively it is easy to check that for each $n\geq 1$
\begin{equation}\label{fn estimate}
(|z|-|c/z|)^{2^n}\leq |f^n_c(z)|\leq (|z|+|c/z|)^{2^n},
\end{equation}
hence
    $$\log( |z|-|c/z|)\leq \lambda_c(z)\leq \log (|z|+|c/z|)$$
and 
    $$\log (|z^2+c|-|c|/|z^2+c|)\leq \lambda_c(z^2+c)=2\lambda_c(z)\leq \log (|z^2+c|+|c|/|z^2+c|).$$
Consequently for any $|z|\geq M^2$ and $|c|\leq M$, by \eqref{logestimate} one has
\begin{align*}\left|2\lambda_c(z)-\log |z^2+c|\right|&\leq \left|\log\left (1\pm \frac{|c|}{|z^2+c|^2}\right)\right|\leq   \frac{|c|}{|z^2+c|^2}+ \frac{|c|^2}{|z^2+c|^4}\frac{1}{2(1-\frac{|c|}{|z^2+c|^2})}\\&\leq \frac{101}{100}\frac{|c|}{|z|^4}.
\end{align*}
Now, by the triangle inequality and \eqref{logestimate} we have 
\begin{align*}
 \left|4\lambda(z)-\left(\frac{\epsilon}{z^2}+\frac{\bar \epsilon}{\bar z^2}\right) \right|&\leq \sum_{i=1,2}\left(\left|4\lambda_{c_i}(z)-2\log|z^2+c_i|\right|+\left|\log(z^2+c_i)-\log z^2-\frac{c_i}{z^2}\right| \right)\\
      &\, \, \, \, \,\, \, \,  +\sum_{i=1,2}\left|\log(\bar z^2+\bar c_i)-\log \bar z^2-\frac{\bar c_i}{\bar z^2}\right| \\
       &\leq \sum_{i=1,2}\left( \frac{202}{100}\frac{|c_i|}{|z|^4}+\frac{ |c_i|^2/|z|^4}{1-\left|\frac{c_i}{z^2}\right|}\right)\leq \sum_{i=1,2}\left( \frac{202}{100}\frac{|c_i|}{|z|^4}+\frac{101}{100} \cdot \frac{|c_i|^2}{|z|^4}\right).
\end{align*}
\qed

\begin{lemma}\label{ratio estimate} For any $z, c\in \mathbb C$ with $|z|\geq M^2$ and $ |c|\leq M$, we have 
$$\left| \prod_{i=1}^{n}\frac{(f^{i-1}_c(z))^2}{f^i_c(z)}-1+\frac{c}{z^2}\right|\leq  \frac{102}{100} \cdot \frac{|c|^2}{|z|^4}+ \frac{104}{100} \cdot \frac{|c|}{|z|^4}.$$
\end{lemma}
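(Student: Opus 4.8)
The plan is to express the telescoping product through the relative errors $a_i := c/(f_c^{i-1}(z))^2$, $i \geq 1$. Since $f_c^i(z) = (f_c^{i-1}(z))^2 + c$ and $f_c^0(z) = z$, one has $a_1 = c/z^2$ and
$$\prod_{i=1}^{n}\frac{(f^{i-1}_c(z))^2}{f^i_c(z)} = \prod_{i=1}^n \frac{1}{1+a_i},$$
so the quantity to control is the deviation of $\prod_{i=1}^n(1+a_i)^{-1}$ from $1 - a_1$. First I would peel off the $i=1$ factor and record the exact algebraic identity
$$\frac{1}{1+a_1}\,P - 1 + a_1 = \frac{(P-1) + a_1^2}{1+a_1}, \qquad P := \prod_{i=2}^n \frac{1}{1+a_i}$$
(with $P = 1$ when $n=1$), which reduces the claim to the bound $(|P-1| + |a_1|^2)(1-|a_1|)^{-1} \leq \tfrac{102}{100}\tfrac{|c|^2}{|z|^4} + \tfrac{104}{100}\tfrac{|c|}{|z|^4}$. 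Here $|a_1|^2 = |c|^2/|z|^4$ accounts for the $|c|^2/|z|^4$ term in the target, and $(1-|a_1|)^{-1}$ is harmless since $|a_1| \leq |c|/|z|^2 \leq 1/M^3 \leq 10^{-9}$.

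The substantive step is estimating $|P - 1|$, for which I would invoke the iteration bound \eqref{fn estimate}: for $i \geq 2$ it gives $|f_c^{i-1}(z)| \geq (|z| - |c|/|z|)^{2^{i-1}} \geq \big(|z|(1 - 1/M^3)\big)^{2^{i-1}}$, whence $|a_i| \leq |c|\big(|z|(1-1/M^3)\big)^{-2^i}$. Summing this super-exponentially decaying series against a geometric majorant (of ratio $\lesssim |z|^{-4}$) yields $\sum_{i\geq 2}|a_i| \leq \tfrac{|c|}{|z|^4}(1 + \eta)$ with $\eta \lesssim 10^{-8}$, the correction coming from $(1-1/M^3)^{-4}$ and from the geometric tail, both tiny because $M \geq 1000$ and $|z| \geq M^2 \geq 10^6$. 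Then, writing $P = \prod_{i\geq 2}(1+b_i)$ with $b_i = (1+a_i)^{-1} - 1 = -a_i/(1+a_i)$, the elementary inequality $|P - 1| \leq e^{\sum|b_i|} - 1 \leq \big(\sum|b_i|\big)e^{\sum|b_i|}$ together with $|b_i| \leq (1 + o(1))|a_i|$ gives $|P - 1| \leq \tfrac{|c|}{|z|^4}(1 + \eta')$ with $\eta'$ again microscopic. Combining this with the first paragraph finishes the estimate, leaving comfortable slack between the constants produced and $\tfrac{102}{100}$, $\tfrac{104}{100}$.

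The only real obstacle is bookkeeping. One must keep every error term proportional to $|c|$ — so that the bound is (correctly) sharp when $c = 0$, where the product is identically $1$ — and then verify that the accumulated numerical slack stays below the stated constants; nothing here is delicate, since the hypotheses $|z| \geq M^2$, $|c| \leq M \leq |z|$, $M \geq 1000$ force every ``small'' quantity to have size at most $\sim 10^{-9}$, so the estimates go through with generous rounding. I would also note in passing that \eqref{fn estimate} is applicable because $|z| > |c|/|z|$ and $|z| > 4$, both immediate from $|z| \geq M^2$.
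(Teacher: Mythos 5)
Your proposal is correct and follows essentially the same route as the paper's proof: both isolate the $i=1$ factor (which produces the $|c|^2/|z|^4$ term via the exact identity $\tfrac{1}{1+a_1}-1+a_1=\tfrac{a_1^2}{1+a_1}$) and show, using the iteration bound \eqref{fn estimate}, that the tail product over $i\geq 2$ differs from $1$ by at most roughly $|c|/|z|^4$. The only difference is cosmetic — you bound $|P-1|$ directly via $\prod(1+|b_i|)-1\leq e^{\sum|b_i|}-1$, while the paper takes logarithms of the factors and uses $|e^{\alpha}-1|\leq |\alpha|/(1-|\alpha|)$ — and your constants land comfortably within $\tfrac{102}{100}$ and $\tfrac{104}{100}$.
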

\proof For any $\alpha\in \mathbb C$ with $|\alpha|<1$, we have
  $$|e^\alpha-1|=\left|\alpha+\frac{\alpha^2}{2!}+\cdots \right|\leq |\alpha|+\frac{|\alpha|^2}{2!}+\cdots\leq \frac{|\alpha |}{1-|\alpha |}.$$
For each $i$, we always take $\log \frac{(f_c^{i-1}(z))^2}{f_c^{i}(z)}$ to be the one with  
   $$-\pi/2<\mathrm{Im}(\log \frac{(f_c^{i-1}(z))^2}{f_c^{i}(z)})<\pi/2$$
and set $\log\prod_{i=2}^{n}\frac{(f^{i-1}_c(z))^2}{f^i_c(z)}:=\sum^n_{i=2}\log\frac{(f^{i-1}_c(z))^2}{f^i_c(z)}$.
Then for each $i\geq 2$, by \eqref{logestimate} and \eqref{fn estimate}, we have
\begin{align*}
\left|\log \frac{(f_c^{i-1}(z))^2}{f_c^{i}(z)}\right|&=\left|\log \frac{1}{1+c/(f_c^{i-1}(z))^2}\right|=\left|\log \left(1+\frac{c}{(f_c^{i-1}(z))^2}\right)\right|\\
  &\leq \left|\frac{c}{(f_c^{i-1}(z))^2}\right|\left(1+\frac{\left|\frac{c}{(f_c^{i-1}(z))^2}\right|}{2\left(1-\left|\frac{c}{(f_c^{i-1}(z))^2}\right|\right)}\right)\\
  &\leq \frac{101}{100}\frac{|c|}{(|z|-|c/z|)^{2^i}},
\end{align*}
for the last inequality we use the fact that $|c|/|f_c^{i-1}(z)|^2\leq |c|/(|z|-|c/z|)^{2^i}\leq 1/1000$.
Therefore, since $(|z|-|c/z|)^2\geq M^2/2$, we conclude 
   $$\left|\log\prod_{i=2}^{n}\frac{(f^{i-1}_c(z))^2}{f^i_c(z)}\right|\leq\sum_{i=2}^n\frac{101}{100}\cdot \frac{|c|}{(|z|-|c/z|)^{2^i}}\leq \frac{102}{100}\cdot \frac{|c|}{(|z|-|c/z|)^{4}}\leq \frac{103}{100}\cdot \frac{|c|}{|z|^{4}}.$$
For $i=1$,  
   $$\left|\frac{(f_c^{i-1}(z))^2}{f_c^{i}(z)}-1+\frac{c}{z^2}\right|=\left|\frac{1}{1+\frac{c}{z^2}}-1+\frac{c}{z^2}\right|\leq \frac{|c|^2}{|z|^4}\cdot \frac{1}{1-|c/z^2|}\leq \frac{101}{100} \cdot \frac{|c|^2}{|z|^4}.$$
Finally, let 
   $$\alpha=\log\prod_{i=2}^{n}\frac{(f^{i-1}_c(z))^2}{f^i_c(z)} \textup{ and } \beta=\frac{z^2}{z^2+c}-1+\frac{c}{z^2}$$
and then 
 \begin{align*}
\left| \prod_{i=1}^{n}\frac{(f^{i-1}_c(z))^2}{f^i_c(z)}-1+\frac{c}{z^2}\right|&=\left|e^\alpha\left(\beta+1-\frac{c}{z^2}\right)-\left(1-\frac{c}{z^2}\right)\right|\\
     &\leq |e^\alpha\beta|+\left|(e^\alpha-1)\left(1-\frac{c}{z^2}\right)\right|\\
     &\leq \left(1+\frac{|\alpha|}{1-|\alpha|}\right)|\beta|+\frac{|\alpha|}{1-|\alpha|}\left(1+\frac{|c|}{|z|^2}\right).
     \end{align*}
 The inequalities for $\alpha$ and $\beta$ give  
 \begin{align*}
\left| \prod_{i=1}^{n}\frac{(f^{i-1}_c(z))^2}{f^i_c(z)}-1+\frac{c}{z^2}\right|&\leq \left(1+\frac{\frac{103}{100}\cdot \frac{|c|}{|z|^{4}}}{1-\frac{103}{100}\cdot \frac{|c|}{|z|^{4}}}\right)\cdot \frac{101}{100} \cdot \frac{|c|^2}{|z|^4}+\frac{\frac{103}{100}\cdot \frac{|c|}{|z|^{4}}}{1-\frac{103}{100}\cdot \frac{|c|}{|z|^{4}}}\left(1+\frac{|c|}{|z|^2}\right)\\
     &\leq  \frac{102}{100} \cdot \frac{|c|^2}{|z|^4}+ \frac{104}{100} \cdot \frac{|c|}{|z|^4}.
\end{align*}\qed

\begin{lemma}\label{partial estimate}
With the same hypotheses as Lemma \ref{ratio estimate}, we have that 
   $$\left|\frac{2\partial \lambda_c(z)}{\partial z}-\frac{1}{z}+\frac{c}{z^3}\right|\leq  \frac{102}{100} \cdot \frac{|c|^2}{|z|^5}+ \frac{104}{100} \cdot \frac{|c|}{|z|^5}.$$
\end{lemma}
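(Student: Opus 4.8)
The plan is to identify $2z\,\partial\lambda_c(z)/\partial z$ with the limit of exactly the products estimated in Lemma~\ref{ratio estimate}, and then to pass to the limit there.

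First I would fix the analytic setup. Since $|z|\ge M^2$ and $|c|\le M$ with $M\ge 1000$, the point $z$ lies well inside the domain of the B\"ottcher coordinate $\varphi_c$ (with expansion \eqref{phi expansion}); there $f^j_c(z)\ne 0$ for all $j\ge 0$ and $(f^n_c)^{1/2^n}\to\varphi_c$ locally uniformly. Consequently the holomorphic functions $2^{-n}\log f^n_c$ converge locally uniformly to $\psi_c:=\log\varphi_c$, and $\lambda_c=\operatorname{Re}\psi_c$. As $\psi_c$ is holomorphic this gives $2\,\partial\lambda_c/\partial z=\psi_c'$, and the locally uniform convergence of holomorphic functions also yields $2^{-n}(f^n_c)'/f^n_c\to\psi_c'$.

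Next comes an elementary telescoping. By the chain rule and $f_c'(w)=2w$ one has $(f^n_c)'(z)=2^n\prod_{j=0}^{n-1}f^j_c(z)$, so that
\[
z\cdot\frac{1}{2^n}\,\frac{(f^n_c)'(z)}{f^n_c(z)}=\frac{z\prod_{j=0}^{n-1}f^j_c(z)}{f^n_c(z)}=\prod_{i=1}^{n}\frac{\bigl(f^{i-1}_c(z)\bigr)^2}{f^i_c(z)},
\]
the last equality being a rearrangement of numerators and denominators. Letting $n\to\infty$, the left side tends to $z\psi_c'(z)=2z\,\partial\lambda_c(z)/\partial z$, hence
\[
\frac{2z\,\partial\lambda_c(z)}{\partial z}=\lim_{n\to\infty}\prod_{i=1}^{n}\frac{\bigl(f^{i-1}_c(z)\bigr)^2}{f^i_c(z)}.
\]
Because Lemma~\ref{ratio estimate} bounds each finite product uniformly in $n$, the same bound passes to the limit, giving
\[
\left|\frac{2z\,\partial\lambda_c(z)}{\partial z}-1+\frac{c}{z^2}\right|\le\frac{102}{100}\cdot\frac{|c|^2}{|z|^4}+\frac{104}{100}\cdot\frac{|c|}{|z|^4},
\]
and dividing through by $|z|$ yields the claimed inequality.

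The only genuinely delicate point is the first step, namely the identity $2\,\partial\lambda_c/\partial z=\psi_c'$ together with the interchange of $\partial/\partial z$ and $\lim_n$. Both rest on the standard fact that $f^n_c(z)^{1/2^n}\to\varphi_c(z)$ locally uniformly near $\infty$, so that the holomorphic approximants converge together with their derivatives; I would cite \cite{Milnor:dynamics} for this. Granting it, the rest is just the telescoping identity and an application of the already-proved Lemma~\ref{ratio estimate}, with no new estimates required.
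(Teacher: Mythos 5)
Your proposal is correct and follows essentially the same route as the paper: both identify $2\partial\lambda_c/\partial z$ with $\lim_n 2^{-n}(f_c^n)'/f_c^n$, use the chain rule $f_c'(w)=2w$ to telescope this into the product $\frac{1}{z}\prod_{i=1}^n\frac{(f_c^{i-1}(z))^2}{f_c^i(z)}$, and then apply Lemma \ref{ratio estimate} and pass to the limit. The only difference is cosmetic: you justify the interchange of derivative and limit explicitly via the B\"ottcher coordinate and locally uniform convergence of the holomorphic approximants, whereas the paper differentiates the limit formula for $2\lambda_c$ directly, leaving that standard step implicit.
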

\proof Consider   
 \begin{align*}2\lambda_c(z)&=\lim_{n\to \infty}\frac{2\log^+|f_c^n(z)|}{2^n}=\lim_{n\to \infty}\frac{\log \left(f_c^n(z) \cdot  f_{\bar c}^n(\bar z)\right) }{2^n}\\
 &=\lim_{n\to \infty}\left(\frac{\log f_c^n(z)}{2^n}+\frac{\log f_{\bar c}^n(\bar z)}{2^n}\right),
\end{align*}
and take partial derivatives of both sides, so that we have
   $$\frac{2\partial \lambda_c(z)}{\partial z} = \lim_{n\to \infty} \frac{2\partial \log^+ |f_c^n(z)|}{\partial z}=\lim_{n\to \infty}\frac{1}{2^n}\frac{\partial \log f_c^n(z)}{\partial z}=\lim_{n\to\infty}\frac{1}{2^n}\frac{\prod_{i=1}^nf'_c(f_c^{i-1}(z))}{f_c^n(z)}$$
which is independent on the choices of $\log f_c^n(z)$ and $\log f_{\bar c}^n(\bar z)$. 
Combining this with Lemma  \ref{ratio estimate}, we conclude that
\begin{align*}
\left|\frac{2\partial \lambda_c(z)}{\partial z}-\frac{1}{z}+\frac{c}{z^3}\right|&=\left|\lim_{n\to\infty}\frac{1}{2^n}\frac{\prod_{i=1}^nf'_c(f_c^{i-1}(z))}{f_c^n(z)}-\frac{1}{z}+\frac{c}{z^3}\right|\\
%   &=\left|\lim_{n\to\infty}\frac{\prod_{i=1}^nf_c^{i-1}(z)}{f^n(z)}-\frac{1}{z}+\frac{c}{z^3}\right|\\
  &=\left|\frac{1}{z}\lim_{n\to\infty}\left(\prod_{i=1}^{n}\frac{(f^{i-1}_c(z))^2}{f^i_c(z)}-1+\frac{c}{z^2}\right)\right| \\
  &\leq  \frac{102}{100} \cdot \frac{|c|^2}{|z|^5}+ \frac{104}{100} \cdot \frac{|c|}{|z|^5}.
\end{align*}\qed

Similarly 
\begin{equation}\label{partial bar estimate}\left|\frac{2\partial \lambda_c(z)}{\partial \bar z}-\frac{1}{\bar z}+\frac{\bar c}{\bar z^3}\right|\leq  \frac{102}{100} \cdot \frac{|c|^2}{|z|^5}+ \frac{104}{100} \cdot \frac{|c|}{|z|^5}.
\end{equation}

\medskip
Now we are ready to control the big-O term in \eqref{big O term}.  Write 
   $$\lambda=\left(\frac{\eps}{4z^2}+\frac{\bar \eps}{4\bar z^2}\right)+\left[\lambda-\left(\frac{\eps}{4z^2}+\frac{\bar \eps}{4\bar z^2}\right)\right],$$
   $$\frac{\partial \lambda}{\partial z}dz=\left[\left(\frac{\partial \lambda}{\partial z}+\frac{\eps}{2z^3}\right)-\frac{\eps}{2z^3}\right]dz \quad\textup{ and }\quad \frac{\partial \lambda}{\partial \bar z}d\bar z=\left[\left(\frac{\partial \lambda}{\partial \bar z}+\frac{\bar \eps}{2\bar z^3}\right)-\frac{\bar \eps}{2\bar z^3}\right]d\bar z.$$
We set 
   $$I_1=\frac{|\eps|}{2s}\max_{i=1,2}\left[ \frac{102}{100} \cdot \frac{|c_i|^2}{|s|^5}+ \frac{104}{100} \cdot \frac{|c_i|}{|s|^5}\right],$$
    $$I_2=\frac{1}{4}\sum_{i=1,2}\left( \frac{202}{100}\frac{|c_i|}{|s|^4}+\frac{101}{100} \cdot \frac{|c_i|^2}{|s|^4}\right)\max_{i=1,2}\left (\frac{102}{100} \cdot \frac{|c_i|^2}{|s|^5}+ \frac{104}{100} \cdot \frac{|c_i|}{|s|^5}\right)\cdot s,$$
and
    $$I_3 =\frac{1}{4}\sum_{i=1,2}\left( \frac{202}{100}\frac{|c_i|}{|s|^4}+\frac{101}{100} \cdot \frac{|c_i|^2}{|s|^4}\right)\frac{|\eps|}{2s^2}.$$
Lemmas \ref{escape estimate} and \ref{partial estimate} along with inequalities \eqref{partial bar estimate} and \eqref{big O term} give 
	$$2\,E_\infty(c_1,c_2) \geq \frac{\eps\bar{\eps}}{16 s^4} - 2(I_1 + I_2 + I_3).$$

By the assumptions $M\geq1000, |c_i| \leq M$ for $i = 1,2$ and $s \geq M^2$, and since $|\eps|=|c_1-c_2|\leq 2M$, we have
    $$I_{1}\leq \frac{103}{100}\cdot \frac{M^3}{s^6}, \quad I_{2}\leq \frac{1}{1000}\cdot \frac{M^3}{s^6}, \quad\textup{ and }\quad I_{3}\leq \frac{102}{800}\cdot \frac{M^3}{s^6}.$$
Therefore, 
  $$2(I_1 + I_2 + I_3) \leq \frac{234}{100}\frac{M^3}{s^6}.$$
This completes the proof of the proposition.
\end{proof}

\subsection{Explicit bound}   \label{final explicit}
As shown in the proof of Theorem \ref{pairingbound} (in \S\ref{pairingboundproof}), we have $\alpha_1 = 1/192$ and $C_1 = 3/17$ in Theorem \ref{pairingbound}, and we may take and $C(\eps) = 40 \log(25/\eps)$ in Theorem \ref{boundingN} as shown in \S\ref{boundingNproof}.  Therefore, $C(\alpha_1/2) = 40\log(50/\alpha_1) < 367$, and whenever $c_1 \ne c_2 \in \overline{\mathbb{Q}}$ so that $f_{c_1}$ and $f_{c_2}$ have $N(c_1,c_2)>1$ common preperiodic points and $h(c_1, c_2) > 139$, we have
	$$N(c_1,c_2) < 281857 < 10^6$$
from \eqref{large height}.  For the set of parameters with $h(c_1,c_2) \leq 139$, the bound we obtain is much larger, as it depends on the small $\delta$ from Theorem \ref{uniformbound}.  We can take $\delta = 10^{-96}$, as explained in \S\ref{explicit delta}.  Taking $\eps = \alpha_1\delta/(8 (C_1 + \alpha_1))$ in \eqref{bounded height}, we find that
\begin{eqnarray*}
N(c_1,c_2) -1 &\leq& \frac{8(C_1+\alpha_1) \cdot 40 \log(25/\eps)}{\alpha_1\delta} \\
	& =&  \frac{320(C_1+\alpha_1)}{\alpha_1\delta} \; \log \frac{200(C_1+\alpha)}{\alpha_1 \delta} \\
	&\leq& \frac{320\cdot 35}{\delta} \; \log \frac{200\cdot 35}{\delta} \\
	&\leq& 96\cdot 320\cdot 35 \cdot 10^{96} \; \log (200 \cdot 35 \cdot 10),
\end{eqnarray*}
so that 
	$$N(c_1,c_2) = |\Preper(f_{c_1})\cap\Preper(f_{c_2})| < 10^{103}.$$
The same bound holds for all $c_1 \not= c_2$ in $\C$, as explained in \S\ref{proof over C}.

\bigskip
\bigskip
%\bibliographystyle{../../tex/bib/math}
%\bibliography{../../tex/bib/math}

\def\cprime{$'$}

\end{document}